\newtheorem{theorem}{Theorem}[section]
\newtheorem{proposition}[theorem]{Proposition}
\newtheorem{corollary}[theorem]{Corollary}
\newtheorem{question}[theorem]{Question}
\newtheorem*{theorem*}{Theorem}
\theoremstyle{definition}
\newtheorem{definition}[theorem]{Definition}
\newtheorem{example}[theorem]{Example}
\newtheorem{construction}[theorem]{Construction}
\theoremstyle{remark}
\newtheorem{remark}[theorem]{Remark}
\numberwithin{equation}{section}
\newcommand{\Hom}{\mathop{\mathrm{Hom}}\nolimits}
\def\C{\mathbb C}
\def\Q{\mathbb Q}
\def\Z{\mathbb Z}
\newcommand{\zk}{\mathcal{Z}_{\mathcal{K}}}
\newcommand{\cpk}{\left(\mathbb{C} P^{\infty}\right)^{\mathcal{K}}}
\newcommand{\K}{\mathcal{K}}
\newcommand{\cp}{\mathbb{C} P^{\infty}}
\newcommand{\sk}{(\underline{S})^{\K}}
\newcommand{\stwok}{(S^2)^{\K}} 
\newcommand{\ah}{\mathbf{AH}}
\newcommand{\cux}{CU_{*}(\Omega X)}
\def\ge{\geqslant}
\def\geq{\geqslant}
\def\leq{\leqslant}
\def\Bar{\mathop{\mathrm{Bar}}}
\def\cobar{\mathop{\mathrm{Cobar}}}
\def\coker{\mathop{\mathrm{Coker}}}
\def\sgn{\mathop{\mathrm{sgn}}}
\def\colim{\mathop{\mathrm{colim}}}
\begin{document}

\title[]{
Adams--Hilton models and higher Whitehead brackets for polyhedral products
%Модели Адамса--Хилтона и произведения Уайтхеда некоторых полиэдральных произведений
}

\author{Elizaveta Zhuravleva}
\address{Lomonosov Moscow State University, Russian Federation;\newline
National Research University Higher School of Economics, Russian Federation}
\email{ahertip@gmail.com}

\thanks{
2020 \textit{Mathematics Subject Classification}
16E45, 55P35, 55Q15, 57S12, 57T30. 
\newline
This work is supported by Russian Science Foundation under grant № 21-71-00049.
}

\begin{abstract}
%В работе строятся модели Адамса--Хилтона для полиэдральных произведений сфер и %пространств Дэвиcа--Янушкевича. Мы показываем, что в этих случаях  модель
%Адамса--Хилтона можно выбрать так, чтобы она совпадала с кобар-конструкцией от %коалгебры гомологий. В качестве приложения мы предъявляем новый способ работы с %итерированными высшими произведениями Уайтхеда --- построение цепей 
%в кобар--конструкции, представляющих классы образа отображения Гуревича.

In this paper, we construct Adams--Hilton models for the polyhedral products of spheres $\sk$ and Davis--Januszkiewicz spaces $\cpk$. We show that in these cases the
Adams--Hilton model can be chosen so that it coincides with the cobar construction of the homology coalgebra. 
We apply the resulting models to the study of iterated higher Whitehead products in $\cpk$.
Namely, we explicitly construct a chain
in the cobar construction representing the homology class of the Hurewicz image of a Whitehead product.
\end{abstract}

\maketitle

\tableofcontents

%\section*{Введение}
\section*{Introduction}

For each simplicial complex $\K$ the following two topological spaces are defined, which are  the main objects in toric topology: the
moment--angle complex $ \zk $ and the Davis--Januszkiewicz space $ \cpk $.
These spaces are particular examples of a more general construction, the polyhedral
product
$(\underline{X}, \underline{A})^{\K}$; see \cite{tortop} for the details 
of this construction.

%Каждому симплициальному комплексу $\K$ на множестве вершин $[m]$ мы можем сопоставить следующие топологические пространства, являющимися главными объектами в торической топологии: момент--угол комплекс $\zk$ и пространство Дэвиса--Янушкевича $\cpk$. 
%Данные пространства являются частными случаями более общего объекта --- полиэдрального
%произведения, которое также определеляется по некоторому симплициальному комплексу $\K$ [123123].
%Имеется большое количество работ, сконцентрированных
%на изучении гомотопического типа данных пространств, гомологических и когомологических %свойств и инвариантов [123123]. В данной работе мы изучаем алгебру Понтрягина
%$H_{*}(\Omega \cpk)$
%пространства 
%Дэвиса--Янушкевича $\cpk$. Алгебра Понтрягина $H_{*}(\Omega\zk)$ момент--угол комплекса можно
%отождествить с коммутаторной подалгеброй в $H_{*}(\Omega \cpk)$ [123123].

The homotopy theory  of polyhedral products is an active research area,
see the survey~\cite{BBCG2}. 
In this paper, we look into the Pontryagin algebra
$ H_{*} (\Omega \cpk) $ of the
Davis--Januszkiewicz  space $ \cpk $. The Pontryagin algebra $ H_{*}(\Omega \zk) $ of the moment--angle complex can be
identified with the commutator subalgebra of $ H_{*}(\Omega \cpk) $.

In \cite{PR}, the Pontryagin algebra $H_{*}(\Omega \cpk)$ was studied from the categorical point of view. In particular, the Pontryagin algebra was described completely there  by generators and relations in the case when $\K$ is a flag complex. A set of multiplicative generators for $H_{*}(\Omega\zk)$ was presented in~\cite{GPTW}.
When $\K$ is not flag, nontrivial higher Whitehead products appear in the
Pontryagin algebra $H_{*}(\Omega\cpk)$ and the problem of describing its generators and relations becomes difficult. 

%В работе \cite{PR} алгебра Понтрягина $H_{*}(\Omega \cpk)$ изучается с категорной точки зрения. В частности, в этой работе получено полное описание алгебры Понтрягина
%в случае флагового симплициалього комплекса $\K$. В работе \cite{GPTW} описан список мультипликативных образующих в $H_{*}(\Omega\zk)$ в случае флагового симплициального комплекса. В нефлаговом случае изучение $H_{*}(\Omega \cpk)$ усложняется наличием нетривиальных высших итерированных произведений Уайтхеда.
Higher Whitehead products are
invariants of unstable homotopy type, they have been
studied since the 1960s, see \cite{hardie}, \cite{porter}, \cite{wil}. 
Nowadays, they 
play an important role in the study of polyhedral products. 
%Важную роль в изучении алгебр Понтрягина полиэдральных произведений
%играет высшее произведение Уайтхеда, инвариант нестабильной теории гомотопий \cite{hardie}.
A large class of simplicial complexes is formed by those  $ \K $ for which $ \zk $ is a wedge of spheres, and each sphere is mapped as a higher iterated Whitehead product to the Davis--Januszkiewicz space $ \cpk $ \cite{PR, AbPa, Ab}. 
%However, in general
%it is not true that if the moment--angle complex $ \zk $ is homotopy equivalent
%a wedge of spheres, then each sphere is represented by some Whitehead product \cite{Ab}.

%Большой подкласс симплициальных комплексов образуют такие $\K$, для которых $\zk$ есть букет сфер, и каждая сфера отображается как высшее итерированное произведение Уайтхеда двумерных классов в пространство Дэвиса--Янушкевича $\cpk$ \cite{PR, AbPa}. Однако, вообще говоря,
%неверно, что если момент--угол комплекс $\zk$ гомотопически эквивалентен
%букету сфер, тогда каждая сфера представляет собой некоторое произведение Уайтхеда \cite{Ab}.

Adams' cobar construction~\cite{Adams} on the singular chains of $X$ gives a model for the Pontryagin algebra $H_*(\Omega X)$; however, it is not easy to work with. We show that for some polyhedral products one may work directly with the cobar construction of the homology coalgebra. The resulting algebraic models can be calculated explicitly.
This constitutes our first result.

%Для изучения алгебры Понтрягина $H_{*}(\Omega \cpk)$ мы используем
%функтор кобар--конструкции $\Omega$, который сопоставляет сингулярным цепям топологического $X$ сингулярные цепи пространства петель $\Omega X$.
%В случае некоторых полиэдральных произведений можно непосредственно работать
%с кобар--конструкцией от гомологий.

\begin{theorem*} [Prop.~\ref{prop_cobar_sph}] 
Let $\sk =
\bigl( (S^{n_1}, pt), \ldots,  (S^{n_k}, pt)
\bigr)^{\K}$ be a polyhedral product of spheres, $n_i \geq 2$ for each $i$.
We regard its homology $H_{*}(\sk)$ with integer coefficients as a dg coalgebra with
zero differential. Then its cobar construction is given by
$$\cobar H_{*}(\sk)
\cong (T(U), \partial), \,\,
U = \langle b_J, \, J \in \K, \, J \neq \varnothing\rangle,
$$
$$\partial(b_{j_1 \cdots j_s}) = 
\sum_{p=1}^{s-1} \sum_{\theta \in \widetilde{S}(p, s-p)} \varepsilon(\theta)
(-1)^{|b_{j_{\theta(1)} \cdots j_{\theta(p)}}|+1 }
\bigl[b_{j_{\theta(1)} \cdots j_{\theta(p)}}, b_{j_{\theta(p+1)} \cdots j_{\theta(s)}}\bigr].
$$
The homology of $\cobar H_{*}(\sk)$ is isomorphic to the Pontryagin
algebra
$H_{*}(\Omega (\sk))$.
\end{theorem*}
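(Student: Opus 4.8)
The plan is to split the assertion into a purely algebraic computation of $\cobar H_*(\sk)$ and a topological identification of its homology with the Pontryagin algebra. First I would determine the homology coalgebra. Since $\sk = \bigcup_{J \in \K}\prod_{i \in J}S^{n_i}$ is a union of products of spheres glued along sub-products, its reduced integral homology is free with one generator $b_J$ in degree $\sum_{i \in J}n_i$ for each nonempty $J \in \K$, namely the exterior product of the fundamental classes $[S^{n_i}]$, $i \in J$. The diagonal restricts to each product factor, so K\"unneth together with $\Delta[S^{n_i}] = [S^{n_i}]\otimes 1 + 1 \otimes [S^{n_i}]$ gives the reduced comultiplication
$$\bar\Delta(b_J) = \sum_{\substack{J = J_1 \sqcup J_2\\ J_1, J_2 \neq \varnothing}}\varepsilon(J_1, J_2)\, b_{J_1}\otimes b_{J_2},$$
where $\varepsilon(J_1, J_2)$ is the Koszul sign of the shuffle sorting $J$ into the ordered blocks $(J_1, J_2)$. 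In particular $H_*(\sk)$ is graded-cocommutative and carries zero differential.

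Next I would apply the cobar functor. By definition $\cobar H_*(\sk) = (T(U), \partial)$ with $U = s^{-1}\widetilde H_*(\sk)$, and since the internal differential vanishes, $\partial$ is the derivation determined on generators by the desuspension of $\bar\Delta$. Writing $b_J$ for the generator $s^{-1}b_J$, the contribution of an ordered split $(J_1, J_2)$ and that of $(J_2, J_1)$ produce, up to sign, the two monomials $b_{J_1}b_{J_2}$ and $b_{J_2}b_{J_1}$ in $T(U)$; graded-cocommutativity of $\bar\Delta$ together with the desuspension signs combines them into the graded commutator $[b_{J_1}, b_{J_2}]$. Indexing the unordered splittings by the reduced shuffles $\widetilde S(p, s-p)$ so that each is counted once, and collecting the sign coming from the shuffle, the desuspension and the commutator, should reproduce the stated formula with coefficient $\varepsilon(\theta)(-1)^{|b_{J_1}|+1}$. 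I expect this sign reconciliation to be the main obstacle: one must track the Koszul sign of $\theta$, the desuspension signs of the cobar differential and the internal sign of the graded commutator simultaneously, and confirm that the reduced shuffle set prevents double-counting so that no spurious factor of $2$ appears over $\Z$.

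Finally I would identify the homology of $(T(U), \partial)$ with $H_*(\Omega\sk)$ through the Adams--Hilton model. The product-of-spheres cell structure gives one cell $e_J$ of dimension $\sum_{i \in J}n_i$ for each nonempty $J$, so an Adams--Hilton model for $\sk$ is a free algebra on generators dual to these cells, one in each degree of $U$; by Adams--Hilton theory its homology is $H_*(\Omega\sk)$. It then remains to match the model differential with $\partial$, which amounts to computing the cellular attaching maps: the boundary of the cell of a product of spheres is an iterated and higher Whitehead product, whose Adams--Hilton image is the corresponding commutator, reproducing $\partial$ term by term. Equivalently, one may use that the cellular chain coalgebra of $\sk$ is formal, so the induced coalgebra quasi-isomorphism $C_*(\sk) \to H_*(\sk)$ yields $\cobar C_*(\sk) \xrightarrow{\ \sim\ } \cobar H_*(\sk)$, after which Adams' equivalence $\cobar C_*(\sk) \simeq C_*(\Omega\sk)$ from \cite{Adams} gives the claimed isomorphism. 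On either route the one delicate point is the compatibility of differentials, already secured by the sign-careful computation of the preceding step.
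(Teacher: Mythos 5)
Your proposal is correct and follows essentially the same route as the paper: identify the homology coalgebra of $\sk$ as free on classes $\alpha_J$, $J\in\K$, with the shuffle-sign coproduct (the paper gets this by dualizing the BBCG description of $H^*(\sk)$ rather than by K\"unneth directly, but the result is identical), then apply the cobar formula~\eqref{dcobar} with $\partial_C=0$ and pair each ordered splitting $(I,L)$ with $(L,I)$ to assemble graded commutators indexed by the reduced shuffles $\widetilde S(p,s-p)$ — exactly the sign computation carried out in the paper's proof of Proposition~\ref{prop_cobar_sph}. The final identification of $H(\cobar H_*(\sk))$ with the Pontryagin algebra via $\Z$-formality plus Adams' theorem (or, alternatively, via the Adams--Hilton model of Theorem~\ref{ther_AH_sph_colim}) likewise matches what the paper does.
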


A similar statement holds for the Davis--Januszkiewicz space $\cpk$ \cite{tortop}.

For a simply connected CW-complex $X$, Adams and Hilton~\cite{AH} constructed
a dga $\mathbf{AH}(X)$ over $\Z$. 
Its underlying algebra $T(V)$ is free on a set of generators $V$
corresponding 
to the cells of~$X$. 
The homology of the Adams--Hilton model is the Pontryagin algebra $H_*(\Omega X)$.
Adams--Hilton models have already arisen in the study of polyhedral products~\cite{GT,GIPS}.
There is an indeterminacy 
in the construction of $\mathbf{AH}(X)$, so there is a family of Adams--Hilton models for a given~$X$. However, in the case of our polyhedral products, we prove that there exists a canonical Adams--Hilton model, which  coincides with the cobar construction of the homology coalgebra. 
In general, it is not true that an Adams--Hilton
model is isomorphic to the cobar construction of some coalgebra.

\begin{theorem*}[Th.~\ref{th_AH_cobar_coincide}] 
Let $\K$ be an arbitrary simplicial complex.
Then there exist Adams--Hilton models $\ah(\sk)$ and $\ah(\cpk)$, such that
$$
  \ah(\sk)  \cong \cobar H_{*}(\sk),\qquad
  \ah(\cpk)  \cong \cobar H_{*}(\cpk)
$$
in the category DGA.
\end{theorem*}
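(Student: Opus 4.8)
The plan is to exhibit $\cobar H_{*}(X)$, for $X=\sk$ and $X=\cpk$, as an Adams--Hilton model by producing a dga quasi-isomorphism from it to the cubical chains on the loop space. Recall that an Adams--Hilton model for a simply connected $X$ is a free dga $T(V)$ with $V$ in bijection with the cells of $X$ (degrees shifted by one), together with a dga quasi-isomorphism $\theta\colon T(V)\to \cux$. By Adams' cobar theorem there is a natural dga quasi-isomorphism $\cobar C_{*}(X)\xrightarrow{\ \simeq\ }\cux$, where $C_{*}(X)$ is the reduced normalized singular chain coalgebra. Since $\cobar$ is a covariant functor on coaugmented coalgebras that preserves quasi-isomorphisms in the simply connected range, it therefore suffices to construct a dg-coalgebra quasi-isomorphism $\iota\colon H_{*}(X)\to C_{*}(X)$: then $\theta$ is the composite of $\cobar(\iota)$ with Adams' equivalence, and it remains only to verify that the underlying algebra of $\cobar H_{*}(X)$ is free on the cells of $X$.

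That last point I would settle first, by fixing the cell structures. Each sphere $S^{n_{i}}$ is given one cell in dimensions $0$ and $n_{i}$, and $\cp$ one cell in each even dimension; the polyhedral product structure then indexes the cells of $\sk$ by the nonempty faces $J\in\K$ (a cell of dimension $\sum_{j\in J}n_{j}$) and, analogously, the cells of $\cpk$ by the corresponding multi-indices. Because $n_{i}\ge 2$ and $\cp$ has cells only in even degrees, the cellular boundary vanishes in both cases, so $H_{*}(X)$ is free abelian on exactly this set of generators. Consequently the generating space $U=s^{-1}\bar H_{*}(X)$ of $\cobar H_{*}(X)$ is in degree-shifted bijection with the cells of $X$, matching the generating space of any Adams--Hilton model.

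The heart of the argument is the construction of $\iota$, i.e.\ the strict realization of the homology coproduct at the chain level. I work through the cellular chain coalgebra $C_{*}^{CW}(X)$, equipped with a cellular diagonal approximation, which comes with a natural dg-coalgebra quasi-isomorphism $C_{*}^{CW}(X)\to C_{*}(X)$. The input is that $\sk$ and $\cpk$ are assembled from products of spheres and copies of $\cp$, on each of which the cellular diagonal may be chosen to coincide with the homology coproduct: the top cell of a sphere is primitive, and the minimal cell structure on $\cp$ realizes its cocommutative coproduct. For a product of such factors the product cell structure identifies $C_{*}^{CW}$ with the tensor product of the factors' cellular chains, and one takes the cellular diagonal to be the tensor product of the factorwise diagonals, which is exactly the K\"unneth form of the homology coproduct. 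These choices are natural with respect to the face inclusions, so they are compatible over the diagram defining the polyhedral product and glue, over the colimit, to a cellular diagonal on $X$ with $C_{*}^{CW}(X)=H_{*}(X)$ as dg coalgebras. The composite $\iota\colon H_{*}(X)=C_{*}^{CW}(X)\to C_{*}(X)$ is then the desired coalgebra quasi-isomorphism, and $\cobar(\iota)$ carries the differential of $\cobar H_{*}(X)$ to the desuspension of the reduced coproduct $\bar\Delta$, which by Proposition~\ref{prop_cobar_sph} is the stated bracket formula; hence $\theta$, the composite of $\cobar(\iota)$ with Adams' equivalence, is a dga quasi-isomorphism whose source is free on the cells, so $\cobar H_{*}(X)$ is an Adams--Hilton model and the asserted isomorphism in DGA is the identity.

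The main obstacle is this coherent gluing of the factorwise coalgebra choices over all faces of $\K$. Concretely, one must check that the cellular diagonal of $\sk$ (resp.\ $\cpk$) splits as the sum over face decompositions $J=J_{1}\sqcup J_{2}$ dictated by the homology coproduct, with no lower-dimensional correction terms appearing and with the choices made compatibly as $J$ ranges over $\K$. This is where the vanishing of the cellular differential and the rigidity of the product cell structure are essential: they force the cellular approximation of the diagonal to be carried, up to the prescribed boundaries, by the product cells alone, so that no obstruction to coherence arises. Once this coalgebra formality is in place, the identification of the cobar differential with the Adams--Hilton differential is purely formal, being the content of Proposition~\ref{prop_cobar_sph}.
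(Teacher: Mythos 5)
Your strategy is genuinely different from the paper's: the paper never produces a direct quasi-isomorphism $\cobar H_{*}(X)\to \cux$; instead it runs the Adams--Hilton inductive construction explicitly (building $\ah(\mathbb{C}P^n)$, $\ah(S^{n_1}\times\cdots\times S^{n_k})$, $\ah(\mathbb{C}P^{k_1}\times\cdots\times\mathbb{C}P^{k_n})$, using the indeterminacy of the differential, the vanishing of $H_{2n-2}(\Omega\mathbb{C}P^n)$, tensor-length arguments, and the colimit properties of AH models over the faces of $\K$) and then simply observes that the resulting differentials coincide with the cobar differentials of Proposition~\ref{prop_cobar_sph} and formula~\eqref{diff_cpk}. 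Your plan --- certify $\cobar H_{*}(X)$ directly as an Adams--Hilton model by composing $\cobar$ of a coalgebra quasi-isomorphism $\iota\colon H_{*}(X)\to C_{*}(X)$ with Adams' equivalence --- would be more conceptual if it worked, but it has a genuine gap at its central step.

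The gap is the assertion that the cellular chain complex $C_{*}^{CW}(X)$, ``equipped with a cellular diagonal approximation, comes with a natural dg-coalgebra quasi-isomorphism $C_{*}^{CW}(X)\to C_{*}(X)$.'' Cellular chains carry no natural coalgebra structure; a diagonal approximation $X\to X\times X$ is only unique up to cellular homotopy, the induced map $C_{*}^{CW}(X)\to C_{*}^{CW}(X)\otimes C_{*}^{CW}(X)$ is in general only coassociative up to (higher) homotopy, and --- most importantly --- the comparison between cellular and singular chains is not a map of coalgebras, since the singular Alexander--Whitney diagonal is not cellular. The fact that the cellular \emph{differential} vanishes for $\sk$ and $\cpk$ guarantees that any chain-level diagonal induces the homology coproduct, but it does not produce the strict dg-coalgebra quasi-isomorphism $\iota$ into $C_{*}(X)$ that your argument requires; establishing integral coformality of these spaces at the coalgebra level (or at least a zig-zag of coalgebra quasi-isomorphisms, which one would then rectify using that $\cobar H_{*}(X)$ is quasi-free) is precisely the nontrivial content you are assuming. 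A secondary gap: even granted a dga quasi-isomorphism $\theta\colon\cobar H_{*}(X)\to\cux$ whose source is free on the cells, you still need a recognition principle saying this datum \emph{is} an Adams--Hilton model in the sense of the inductive construction (compatibility of $\theta$ with the skeletal filtration), since mere quasi-isomorphism of two free dgas on the same generators does not yield the isomorphism in DGA that the theorem asserts. Neither point is fatal in principle, but both are exactly where the work lies, and the paper's explicit inductive route avoids them entirely.
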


Adams--Hilton models behave nicely with respect to CW-maps.
If we fix models $\mathbf{AH}(X)$ and $\mathbf{AH}(Y)$ and a CW-map $f\colon X \rightarrow Y$,
there is a map $AH(f)\colon \mathbf{AH}(X) \rightarrow \mathbf{AH}(Y)$, which
in some cases can be calculated explicitly. 
This turns out to be useful when we work with higher Whitehead products.

In particular, we have the following result about the iterated higher Whitehead product considered in \cite{AbPa}.
%Преимущество использования моделей Адамса--Хилтона заключается в том, что 
%эти модели имеют хорошие свойства по отношению к отображениям.
%Если зафиксировать модели $\mathbf{AH}(X)$ и $\mathbf{AH}(Y)$, каждому отображению
%$CW$-комплексов 
%$f\colon X \rightarrow Y$ можно сопоставить отображение моделей 
%$AH(f)\colon \mathbf{AH}(X) \rightarrow \mathbf{AH}(Y)$, которое в некоторых случаях можно полностью описать. Это оказывается полезным в случае (канонического) итерированного высшего произведения Уайтхеда.

\begin{theorem*}[Th.~\ref{th_wh}] Let $[[\mu_1, \mu_2, \mu_3], \mu_4, \mu_5] \in \pi_7(\Omega \cpk)$
be the canonical iterated higher Whitehead product in $\cpk$, given by the composite
$$
S^8 \longrightarrow T(S^5, S^2, S^2) \longrightarrow (S^2)^{\K}
\longrightarrow \cpk.
$$
Then Adams--Hilton models can be constructed explicitly for each map in the sequence above:
$$
\mathbf{AH}(S^8) \longrightarrow \mathbf{AH}(T(S^5, S^2, S^2)) \longrightarrow 
\mathbf{AH}((S^2)^{\K}) \longrightarrow \mathbf{AH}(\cpk).
$$
Furthermore, the Hurewicz image of the higher Whitehead product
$[[\mu_1, \mu_2, \mu_3], \mu_4, \mu_5]$ is represented by the following cycle in the cobar construction 
$\cobar H_{*}(\cpk) \cong \mathbf{AH}(\cpk)$:
$$
- \partial \bigl( 
[\chi_{123}, \chi_{45}] + [\chi_{1234}, \chi_5] + [\chi_{1235}, \chi_4] 
\bigr) .
$$
\end{theorem*}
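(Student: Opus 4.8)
The plan is to realise all four Adams--Hilton models as cobar constructions with explicit generators, compute the three induced dga maps on generators, and follow the image of the fundamental class of $S^8$.

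First I would fix the models. By Theorem~\ref{th_AH_cobar_coincide} we may take $\mathbf{AH}((S^2)^{\K})\cong\cobar H_*((S^2)^{\K})$ and $\mathbf{AH}(\cpk)\cong\cobar H_*(\cpk)$, with generators $\chi_J$ in degree $2|J|-1$ and differential given by the shuffle formula of Proposition~\ref{prop_cobar_sph}. The fat wedge $T(S^5,S^2,S^2)$ is the polyhedral product over $\partial\Delta^2$, so its model is again a cobar construction, free on $b_1,b_2,b_3$ (degrees $4,1,1$) and $b_{12},b_{13},b_{23}$ (degrees $6,6,3$), with $\partial b_i=0$ and $\partial b_{ij}=\pm[b_i,b_j]$. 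Finally $\mathbf{AH}(S^8)=(T(a),0)$ with $|a|=7$.

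Next I would compute the maps. The first map is the attaching map of the top cell of $S^5\times S^2\times S^2=(\underline S)^{\Delta^2}$; since adjoining a cell along $w$ corresponds to adjoining a generator with differential $\mathbf{AH}(w)(a)$, it sends $a$ to $\pm\partial b_{123}$, where $b_{123}$ is the top generator of $\cobar H_*((\underline S)^{\Delta^2})$, which by the shuffle formula expands as $\pm([b_{12},b_3]+[b_{13},b_2]+[b_{23},b_1])$. For the composite $\Phi$ into $\cobar H_*(\cpk)$ I would use that the $S^5$ factor realises the inner product $[\mu_1,\mu_2,\mu_3]$, whence $b_1\mapsto\pm\partial\chi_{123}$ (the Hurewicz image of the inner product, one dimension down), while $b_2\mapsto\chi_4$, $b_3\mapsto\chi_5$ and $b_{23}\mapsto\chi_{45}$. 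Since the inner product forces $\{1,2,3\}\notin\K$, and hence $\{1,2,3,4\},\{1,2,3,5\}\notin\K$, the generators $\chi_{123},\chi_{1234},\chi_{1235}$ live only in the ambient cobar construction of the full simplex; I would then define $\Phi(b_{12})$ and $\Phi(b_{13})$ to be the $\cpk$-parts $\partial\chi_{1234}\mp[\chi_{123},\chi_4]$ and $\partial\chi_{1235}\mp[\chi_{123},\chi_5]$, which lie in $\cobar H_*(\cpk)$ and have exactly the boundaries $\pm[\partial\chi_{123},\chi_4]$, $\pm[\partial\chi_{123},\chi_5]$ required by $\partial\Phi(b_{ij})=\pm[\Phi(b_i),\Phi(b_j)]$.

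Assembling these, $\Phi(a)=\pm([\chi_{45},\partial\chi_{123}]+[\Phi(b_{13}),\chi_4]+[\Phi(b_{12}),\chi_5])$, which I would rewrite as $-\partial([\chi_{123},\chi_{45}]+[\chi_{1234},\chi_5]+[\chi_{1235},\chi_4])$: expanding the latter by the Leibniz rule, the three terms carrying the missing generator $\chi_{123}$ — namely $[\chi_{123},[\chi_4,\chi_5]]$, $[[\chi_{123},\chi_4],\chi_5]$ and $[[\chi_{123},\chi_5],\chi_4]$ — cancel by the graded Jacobi identity, leaving a genuine cycle in $\cobar H_*(\cpk)$. Finally, since the Hurewicz homomorphism is computed on Adams--Hilton models by the image of the fundamental class, naturality identifies $\Phi(a)$ with the Hurewicz image of $[[\mu_1,\mu_2,\mu_3],\mu_4,\mu_5]$. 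I expect the main obstacle to be the chain-level control of the first map together with the sign bookkeeping: one must verify, with consistent conventions, that the attaching map of the top cell is represented by $\partial b_{123}$, that the maps on $b_{12},b_{13}$ may indeed be chosen as above within the indeterminacy of Adams--Hilton models, and that the Jacobi cancellation occurs with the stated overall sign $-1$.
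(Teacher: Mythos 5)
Your plan follows essentially the same route as the paper: identify all four models with cobar constructions, send the generator of $\mathbf{AH}(S^8)$ to $\partial b_{123}$, choose images of $b_{12}$ and $b_{13}$ that are the truncations of $\partial\chi_{1234}$ and $\partial\chi_{1235}$ obtained by deleting the bracket with the missing generator $\chi_{123}$, and observe that after assembling, the terms involving $\chi_{123}$, $\chi_{1234}$, $\chi_{1235}$ conspire (via the Jacobi identity and the Leibniz rule) so that the stated expression is a genuine element of $\cobar H_{*}(\cpk)$.

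However, the step you flag at the end but do not resolve --- that ``the maps on $b_{12},b_{13}$ may indeed be chosen as above within the indeterminacy of Adams--Hilton models'' --- is the mathematical heart of the argument, and leaving it unaddressed is a genuine gap. A dga map that commutes with the differentials is not thereby an Adams--Hilton model of a given map of spaces: the permitted indeterminacy (Proposition~\ref{prop2_AH_f}) is only the addition of a boundary, whereas a priori your candidate for $\Phi(b_{12})$ could differ from the value of a true model by a nontrivial cycle $z$ of degree $6$; the final answer would then change by $\pm[z,\chi_5]$, which need not be a boundary, and the identification with the Hurewicz image would fail. The paper closes this gap by exploiting the product structure of the middle map: its restriction to $T(S^2,S^2,S^2)\times S^2$ is the product $\omega\times 1$, and Theorem~\ref{th_anick}~(11) guarantees that \emph{any} dga map $\varphi$ fitting into the commutative square with the projections $\nu_{S^5 S^2}$, $\nu_{T S^2}$ and $\mathbf{AH}(\omega)\otimes 1$ is a model for $\omega\times 1$; one checks $\nu_{S^5 S^2}(b_{12})=0$ and $\nu_{T S^2}\varphi(b_{12})=0$, and then assembles $g$ as a colimit of such product maps via Theorem~\ref{th_anick}~(8) (this is the content of Theorems~\ref{th_prod_sph_AH} and~\ref{th_map_colim_AH}). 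You need this rigidity statement, or an equivalent one, to legitimise your explicit choices of $\Phi(b_{12})$ and $\Phi(b_{13})$; the rest of your outline then goes through as in the paper.
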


I wish to express gratitude to my supervisor Taras E. Panov for stating the problem, valuable advice and
stimulating discussions. I would also like to thank 
Theoretical Physics and Mathematics Advancement Foundation “BASIS”,
of which I am a scholarship recipient.

\section{Preliminaries}
A \textit{simplicial complex} $\K$ on the set $[m] = 
\{1, 2, \ldots, m\}$ is a collection of subsets $I \subset [m]$ closed under taking any subsets. We assume that $\K$ contains 
$\varnothing$ and all one-element subsets $\{i\}, \,\,i = 1, \ldots, m$.
We refer to $I \in \K$ as a \textit{simplex} of $\K$.
Denote by $\Delta^{m-1}$ or by $\Delta(1, \ldots, m)$ the
\textit{full simplex} on the set 
$[m]$.
It is a simplicial complex consisting of all subsets of $[m]$. 
Similarly, denote by $\Delta(I)$ the full simplex on the set  
$I = \{i_1, \ldots, i_s\}$. 
The condition $I \in \K$ implies 
$\Delta(I) \subset \K$.
The simplicial complex $\partial \Delta(I)$ consists of all simplices  $J, \,\,J \in \Delta(I),$ except the simplex $I$.

\begin{construction}
Let $\K$ be a simplicial complex on the set $[m]$. 
Suppose we are given an ordered set of $m$ pairs of based CW-complexes
$$
(\underline{X}, \underline{A}) =
\{ (X_1, A_1), \ldots, (X_m, A_m)
\}.
$$

For each $I \in \K$ we consider the set 
$$
(\underline{X}, \underline{A})^{I} =
\{
(x_1, \ldots, x_m) \in \prod\limits_{k = 1}^{m} X_k
\mid
x_k \in A_k \,\,\text{if} \,\, k \notin I
\}.
$$

We define the \textit{polyhedral product} of  pairs
$(\underline{X}, \underline{A})$ corresponding to $\K$ as
$(\underline{X}, \underline{A})^{\K}$, where
$$
(\underline{X}, \underline{A})^{\K}
= \bigcup\limits_{I \in \K} (\underline{X}, \underline{A})^{I}
= \bigcup\limits_{I \in \K} \Bigl(
\prod\limits_{i\in \K} X_i 
\times
\prod\limits_{i\notin \K} A_i
\Bigr)
\subset \prod\limits_{k = 1}^{m} X_k.
$$
\end{construction}

If $X = X_i$ and $A = A_i$ for all $i$ we use the notation
$(X, A)^{\K}$ instead of
$(\underline{X}, \underline{A})^{\K}$. 
If $A_i = pt$ for all $i$ we write $(\underline{X})^{\K}$.

\begin{example}
The polyhedral product $(D^2, S^1)^{\K}$ is called the
\textit{moment--angle complex}; it is denoted by $\zk$.
\end{example}

\begin{example}
The polyhedral product $(\cp)^{\K} 
= (\cp, pt)^{\K}$ is homotopy equivalent to the
\textit{Davis--Januszkiewicz space} $DJ(\K)$.
\end{example}

\begin{example}
Denote by
$(\underline{S})^{\K} = 
(\underline{S}, pt)^{\K}$
the polyhedral product of spheres:
$$
(\underline{S}, pt)^{\K} =
\bigl( (S^{n_1}, pt), \ldots,  (S^{n_m}, pt)
\bigr)^{\K},  \quad n_i \geqslant 2\text{ for each} \,\, i.
$$
A special case is the space $\stwok$.
\end{example}

\begin{theorem}[{\cite[Th.~4.3.2]{tortop}}]
The moment-angle complex $\zk$ is the homotopy
fibre of the canonical inclusion
$\cpk \hookrightarrow (\cp)^m$.
\end{theorem}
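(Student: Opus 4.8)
The plan is to realize the canonical inclusion as the base map of a Borel-construction fibration. First I would observe that the torus $T^m = (S^1)^m$ acts coordinatewise on $\zk = (D^2, S^1)^{\K} \subset (D^2)^m$, since the standard rotation action of $S^1$ on the disk $D^2$ preserves the boundary circle $S^1$. For any $T^m$-space $Z$ there is the Borel construction (homotopy quotient) $ET^m \times_{T^m} Z$ together with the universal fibration
$$
Z \longrightarrow ET^m \times_{T^m} Z \longrightarrow BT^m,
$$
whose fibre is $Z$. Applying this to $Z = \zk$ and recalling that $BT^m = (\cp)^m$, it suffices to prove two claims: (A) there is a homotopy equivalence $ET^m \times_{T^m} \zk \simeq \cpk$; and (B) under this equivalence the projection to $BT^m$ becomes the canonical inclusion $\cpk \hookrightarrow (\cp)^m$. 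Granting these, $\zk$ is exhibited as the fibre, hence the homotopy fibre, of the inclusion.

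For Claim (A) I would use that $ET^m = (ES^1)^m$ with the coordinatewise action, and that the Borel construction $ES^1 \times_{S^1} (-)$ preserves colimits (it is a product followed by a quotient, both colimit-preserving). Since the polyhedral product is the union $\bigcup_{I \in \K}$ of coordinate products, the Borel construction distributes over it, yielding
$$
ET^m \times_{T^m} \zk \;=\; \bigl( ES^1 \times_{S^1} D^2,\; ES^1 \times_{S^1} S^1 \bigr)^{\K}.
$$
Now the radial contraction $H(z,t) = (1-t)z$ of $D^2$ onto its fixed centre is $S^1$-equivariant, so $D^2 \simeq_{S^1} pt$ and hence $ES^1 \times_{S^1} D^2 \simeq BS^1 = \cp$; and because $S^1$ acts freely on itself, $ES^1 \times_{S^1} S^1 \cong ES^1$ is contractible. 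Thus the pair of fibrewise quotients is homotopy equivalent to $(\cp, pt)$, and the right-hand side becomes $(\cp, pt)^{\K} = \cpk$.

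For Claim (B) I would argue by naturality in $\K$. The inclusion $\K \subset \Delta^{m-1}$ of the full simplex induces a $T^m$-equivariant inclusion $\zk \hookrightarrow \mathcal Z_{\Delta^{m-1}} = (D^2)^m$, and applying the Borel construction produces a map of fibrations over $BT^m$ covering the identity on the base. For the full simplex the total space satisfies $ET^m \times_{T^m} (D^2)^m \simeq BT^m$ (as $(D^2)^m \simeq_{T^m} pt$), and its projection $p_{\Delta}$ is the homotopy equivalence identifying it with $(\cp)^m$. Tracking the equivalence of Claim (A) through this map of fibrations, the induced map on total spaces is precisely the inclusion $\cpk \hookrightarrow (\cp)^m$ of polyhedral products, and since $p_{\Delta}$ realizes the identification $ET^m \times_{T^m}(D^2)^m \simeq (\cp)^m = BT^m$, the projection $p$ for $\zk$ is identified with the canonical inclusion.

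The main obstacle I anticipate is bookkeeping rather than conceptual: one must verify that the Borel construction genuinely commutes with the polyhedral product \emph{as pairs} (checking that $ES^1 \times_{S^1} S^1 \hookrightarrow ES^1 \times_{S^1} D^2$ behaves like a cofibration, so the polyhedral product of the quotient pairs has the expected homotopy type), and that the equivalence of Claim (A) is natural enough to make the diagram in Claim (B) commute up to homotopy over $BT^m$. In other words, the delicate point is ensuring that the equivalence $ET^m \times_{T^m} \zk \simeq \cpk$ is compatible with the downward maps to $BT^m$, so that the identification of the projection with the canonical inclusion is honest and not merely an abstract matching of the two fibres.
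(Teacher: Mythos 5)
This statement is quoted in the paper from \cite[Th.~4.3.2]{tortop} without proof, so there is no in-paper argument to compare against; your proposal is correct and is essentially the standard proof given in that reference, identifying $ET^m\times_{T^m}\zk$ with $\cpk$ via the compatibility of the Borel construction with polyhedral products and then reading off the fibration over $BT^m=(\cp)^m$. The two points you flag as bookkeeping (homotopy invariance of the polyhedral product for cofibred pairs, and compatibility of the equivalence with the projections to $BT^m$) are exactly the ones handled in the cited source, so nothing essential is missing.
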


The fibre inclusion $\zk \hookrightarrow \cpk$ can be described explicitly.
Consider the map of  pairs $(D^2, S^1) \rightarrow (\cp, pt)$ which sends the interior of $D^2$ homeomorphically onto the 2-dimensional  cell in  $\mathbb{C}P^1$, and sends
the boundary of the disk to the basepoint.
Then we have the induced map of the polyhedral products
$\zk = (D^2, S^1)^{\K} \rightarrow (\cp, pt)^{\K}$.

\begin{corollary}
There is an exact sequence of rational homotopy Lie algebras
$$
0 \rightarrow \pi_{*}(\Omega \zk) \otimes \Q \rightarrow
\pi_{*}(\Omega \cpk) \otimes \Q \rightarrow 
\langle u_1, \ldots, u_m \rangle_{\Q} \rightarrow 0
$$
where
$\langle u_1, \ldots, u_m \rangle_{\Q}$
denotes the $m$-dimensional vector space with trivial Lie bracket, 
$ |u_i|=1$; and an exact sequence of Pontryagin algebras
$$
1 \rightarrow H_{*}(\Omega \zk; \textbf{k}) 
\rightarrow
H_{*}(\Omega \cpk; \textbf{k})
\rightarrow \Lambda[u_1, \ldots, u_m] \rightarrow 0
$$
for any commutative ring $\textbf{k}$ with unit.
\end{corollary}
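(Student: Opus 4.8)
The plan is to derive both exact sequences from the fibration $\zk \to \cpk \hookrightarrow (\cp)^m$ of the preceding theorem by looping it once. Since $\cp = \C P^{\infty}$ is a $K(\Z,2)$, the product $(\cp)^m$ is a $K(\Z^m,2)$, so $\Omega (\cp)^m \simeq K(\Z^m,1) \simeq T^m$, the $m$-torus. Its only nonzero rational homotopy group sits in degree $1$ and equals $\langle u_1,\ldots,u_m\rangle_{\Q}$, with vanishing Samelson bracket because $T^m$ is a topological abelian group; and its Pontryagin algebra is the exterior algebra $H_*(T^m;\textbf{k}) = \Lambda[u_1,\ldots,u_m]$. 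Looping the fibration gives a fibration $\Omega\zk \to \Omega\cpk \to T^m$ whose fibre inclusion and projection are loop maps, hence compatible with both the Samelson bracket on rational homotopy and the Pontryagin product on homology.

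For the first sequence I would run the long exact homotopy sequence of $\Omega\zk \to \Omega\cpk \xrightarrow{p} T^m$ and tensor it with $\Q$. Because $\pi_*(T^m)\otimes\Q$ is concentrated in degree $1$ and $\Omega\zk$ is path-connected (indeed $\zk$ is $2$-connected), every connecting homomorphism vanishes: in degrees $\ge 2$ both the target and the relevant $\pi_0$ term are zero, while in degree $1$ the incoming connecting map issues from $\pi_2(T^m)=0$. The long exact sequence therefore collapses into the asserted short exact sequence of graded vector spaces. Naturality of the Samelson product under loop maps makes the two arrows morphisms of graded Lie algebras, and the bracket on the quotient is trivial, being the homotopy Lie algebra of the abelian group $T^m$.

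For the second sequence the key point is to recognise $\Omega\zk \to \Omega\cpk \xrightarrow{p} T^m$ as a \emph{trivial} principal bundle. Choosing a topological-group model for the based loop spaces, $p$ becomes a surjective homomorphism with kernel $\Omega\zk$, hence a principal $\Omega\zk$-bundle. I would build a section $s\colon T^m \to \Omega\cpk$ from the vertex inclusions $\cp \hookrightarrow \cpk$ (which exist because every $\{i\}\in\K$): each loops to a map $S^1 = \Omega\cp \to \Omega\cpk$, and the loop product of these $m$ maps defines $s$. One checks $p\circ s \simeq \mathrm{id}_{T^m}$, since $p\circ\Omega\iota_i$ is the inclusion of the $i$-th circle factor and $T^m$ is abelian. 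A principal bundle admitting a section is trivial, so $s$ together with the group action yields $\Omega\cpk \simeq \Omega\zk \times T^m$. As $H_*(T^m;\textbf{k}) = \Lambda[u_1,\ldots,u_m]$ is $\textbf{k}$-free, Künneth gives $H_*(\Omega\cpk;\textbf{k}) \cong H_*(\Omega\zk;\textbf{k})\otimes\Lambda[u_1,\ldots,u_m]$ over an arbitrary $\textbf{k}$; this makes the fibre inclusion a split monomorphism of algebras, exhibits $p_*$ as a surjection carrying $H_*(\Omega\cpk)$ onto the exterior generators $u_i$, and presents $\Lambda[u_1,\ldots,u_m]$ as the quotient Hopf algebra, which is the desired exact sequence of Pontryagin algebras.

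The step I expect to be the main obstacle is making the principal-bundle description precise. One must fix a functorial topological-group model of the based loops (for instance the Kan loop group) so that $p$ is an honest homomorphism with fibre $\Omega\zk$, and then verify that the loop product of the vertex maps is a section up to homotopy. This is exactly the input that trivializes the bundle and lets Künneth deliver the splitting over an arbitrary coefficient ring, bypassing the extension and local-coefficient problems that a direct Serre spectral sequence argument for $\Omega\zk \to \Omega\cpk \to T^m$ would otherwise raise; there the monodromy is in fact trivial, since it acts on $H_*(\Omega\zk;\textbf{k})$ through translations by elements of the path-connected group $\Omega\zk$ which are homotopic to the identity, but the collapse of the spectral sequence would still have to be argued separately.
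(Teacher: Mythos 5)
Your argument is correct and is precisely the deduction the paper intends: the corollary is stated there without proof as a consequence of the preceding fibration theorem, and you reconstruct exactly that route --- loop the fibration $\zk\to\cpk\to(\C P^{\infty})^m$, read off the first sequence from the long exact homotopy sequence (using that $\zk$ is $2$-connected and $\pi_*(T^m)$ is concentrated in degree $1$ with trivial Samelson bracket), and split the looped fibration via the section $T^m\to\Omega\cpk$ assembled from the vertex inclusions to obtain the second. The only remark worth adding is that the principal-bundle formalism you flag as the delicate step is avoidable: the map $\Omega\zk\times T^m\to\Omega\cpk$, $(x,t)\mapsto j(x)\cdot s(t)$, is a map over $T^m$ up to homotopy restricting to the identity on fibres, hence a weak (and so homotopy) equivalence by comparing long exact sequences, which already yields the K\"unneth splitting over an arbitrary $\mathbf{k}$ and the identification of $\Lambda[u_1,\ldots,u_m]$ as the quotient by the ideal generated by $\widetilde{H}_*(\Omega\zk;\mathbf{k})$.
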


\begin{corollary}
Any map
$S^n \rightarrow \cpk$ with $n \geqslant 3$ lifts to a map $S^n \rightarrow \zk$. 
\end{corollary}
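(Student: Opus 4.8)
The plan is to deduce this lifting statement directly from the homotopy fibre theorem above, combined with the observation that $(\cp)^m$ carries no homotopy in degrees $\geq 3$.

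First I would recall that $\zk$ is the homotopy fibre of the inclusion $\cpk \hookrightarrow (\cp)^m$, so that we have a fibration sequence
$$
\zk \xrightarrow{\,i\,} \cpk \xrightarrow{\,p\,} (\cp)^m .
$$
Since $\cp = K(\Z, 2)$, the $m$-fold product $(\cp)^m$ is a $K(\Z^m, 2)$, and hence $\pi_n\bigl((\cp)^m\bigr) = 0$ for all $n \geq 3$.

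Next I would invoke the long exact sequence of homotopy groups associated to this fibration, the relevant portion being
$$
\pi_n(\zk) \xrightarrow{\,i_*\,} \pi_n(\cpk) \xrightarrow{\,p_*\,} \pi_n\bigl((\cp)^m\bigr) .
$$
For $n \geq 3$ the right-hand term vanishes, so exactness forces $i_*$ to be surjective. A based map $f \colon S^n \to \cpk$ represents a class in $\pi_n(\cpk)$, and surjectivity of $i_*$ produces a class in $\pi_n(\zk)$, that is, a map $\widetilde{f} \colon S^n \to \zk$ with $i \circ \widetilde{f} \simeq f$. This is precisely the desired lift.

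I do not expect a genuine obstacle here: the single point that requires care is the passage from ``the composite $p \circ f$ is null-homotopic'' to ``$f$ lifts through the fibre $\zk$''. This is exactly what exactness of the homotopy long exact sequence encodes. Alternatively, one could argue geometrically by choosing a null-homotopy of $p \circ f$, which exists because $\pi_n\bigl((\cp)^m\bigr) = 0$, and then applying the homotopy lifting property of the fibration $p$ to deform $f$ into the fibre; both routes yield the same conclusion.
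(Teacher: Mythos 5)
Your argument is correct and is exactly the intended one: the paper states this corollary as an immediate consequence of the preceding theorem that $\zk$ is the homotopy fibre of $\cpk \hookrightarrow (\cp)^m$, and since $(\cp)^m$ is a $K(\Z^m,2)$, the long exact sequence of the fibration gives surjectivity of $\pi_n(\zk) \to \pi_n(\cpk)$ for $n \geq 3$. No discrepancy with the paper's (implicit) proof.
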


The following combinatorial construction is crucial for the definition of
canonical higher Whitehead brackets for polyhedral products; it can be found in \cite{AbPa}.

\begin{definition} \label{def_substit}
Let $\K$ be a simplicial complex on the set of vertices $[m]$,  and let
$\K_1, \ldots, \K_m$ be an ordered collection of simplicial complexes such that
sets of vertices of
each two $\K_i$ and $\K_j$ do not intersect.
The
\textit{substitution complex} 
$\K(\K_1, \ldots, \K_m)$ is defined as follows:
$$
\K(\K_1, \ldots, \K_m) =\{ I_{j_1} \sqcup \cdots \sqcup I_{j_k}
\mid I_{j_l} \in \K_{j_l}, \,\,l = 1, \ldots, k, \,\,\{j_1, \ldots, j_k\} \in \K
\}.
$$
\end{definition}

It is easy to see that $\K(\K_1, \ldots, \K_m)$ is a simplicial complex.

\begin{example}
Suppose each $\K_i$ is a single vertex $\{i\}$; then 
$\K(\K_1, \ldots, \K_m) = \K.$ 
In particular,
$$
  \partial \Delta(1, \ldots, m) = \partial \Delta^{m-1}, \quad
  \Delta(1, \ldots, m) =  \Delta^{m-1},
$$
in accordance with the previous notation.
%As before, we omit the dimension of the simplex nad use  notations
%$\partial \Delta(1, \ldots, m)$ and $\Delta(1, \ldots, m)$.
\end{example}

\medskip
\medskip

\hspace{+18ex}
\includegraphics[scale=0.4]{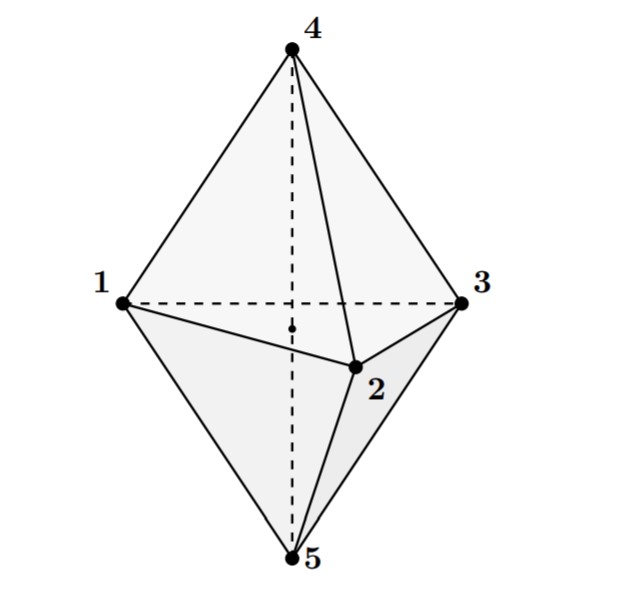}

\begin{center}
Figure 1. Substitution complex $\partial\Delta (\partial\Delta(1, 2, 3), 4, 5)$.
\end{center}

\medskip
\medskip

\begin{example}
Let $\K =\partial\Delta^{2}$, let $\K_1 = \partial\Delta(1, 2, 3),$  let
$\K_2 = \{\varnothing, \{4\} \},$ and let $\K_2 = \{\varnothing, \{5\} \}$. Then the substitution complex has the following form:
$$
\K(\K_1, \K_2, \K_3) = \partial\Delta \bigl(\partial\Delta(1, 2, 3), 4, 5\bigr).
$$
It is shown in Fig. 1.
\end{example}

\section{Canonical higher Whitehead products for the Davis--Januszkiewicz space}

First, we give the definition of a (general) higher Whitehead product, following
\cite{arc}, \cite{hardie}.
Given the spheres
$S^{n_{1}}, \ldots, S^{n_{k}}, n_j \ge 2,$
we consider the wedge 
$S^{n_{1}} \vee \cdots \vee S^{n_{k}}$ and
the
\textit{fat wedge} 
$T\left(S^{n_{1}}, \ldots, S^{n_{k}}\right)$, which is defined by the equation 
$$
S^{n_{1}} \times \cdots \times S^{n_{k}} =
T\left(S^{n_{1}}, \ldots, S^{n_{k}}\right) 
 \cup_\omega e^{N},
$$
where $e^{N}, \,N=n_{1}+\cdots+n_{k},$ is the top cell of the product
 $S^{n_{1}} \times \cdots \times S^{n_{k}}$.
Let
$\omega \colon S^{N-1} \rightarrow T\left(S^{n_{1}}, \ldots, S^{n_{k}}\right)$ be
the attaching map that
corresponds to this cell.

\begin{definition} \label{def_whitehead}
Let $X$ be a topological space. 
Suppose given  homotopy classes
$x_{j} \in \pi_{n_{j}}(X),$ $j=1, \ldots, k.$ 
Denote by $g$ the induced map
$g=x_{1} \vee \ldots \vee x_{k} \colon S^{n_{1}} \vee \cdots \vee S^{n_{k}} \rightarrow X$. 
The 
\textit{(general) higher Whitehead product}
$\left[x_{1}, \ldots, x_{k}\right]_{W} \subset \pi_{N-1}(X)$ is the set of homotopy classes
$$
\{f \circ \omega \mid f \colon T\left(S^{n_{1}}, \ldots, S^{n_{k}}\right) \rightarrow X \text { is an extension of } g\}.
$$
This is described by the diagram
$$
\xymatrix{
&S^{n_{1}} \vee \cdots \vee S^{n_{k}}\ar@{^{(}->}[d]\ar[r]^-g&X\\
S^{N-1}\ar[r]^-\omega 
&T\left(S^{n_{1}}, \ldots, S^{n_{k}}\right)\ar@{-->}[ru]_f
}
$$
We say that the higher Whitehead product
is \textit{trivial} if
$0 \in \left[x_{1}, \ldots, x_{k}\right]_{W}$.
\end{definition}

\begin{remark}
In the case $k=2$,  the fat wedge $T(S^{n_1}, S^{n_2})$ coincides with the wedge 
$S^{n_1} \vee S^{n_2}$ and
we obtain the classical definition of the Whitehead product:
$$
[x_1, x_2]_W\colon
S^{N-1} \xrightarrow{\omega} S^{n_1} \vee S^{n_2} \xrightarrow{g} X.
$$
\end{remark}

From now on we work with the Davis--Januszkiewicz space $\cpk$.
As defined above, a (general) higher Whitehead product is a set of homotopy classes.
However, in the case of $\cpk$ a canonical representative can be chosen in this set.

We define 2-dimensional classes $\mu_i$:
$$
\mu_i\colon S^2 \cong \mathbb{C}P^1 \hookrightarrow \cp
\hookrightarrow (\cp)^{\vee m} \hookrightarrow \cpk.
$$

The map $\cp \hookrightarrow (\cp)^{\vee m}$ is the inclusion of the $i$th 
wedge summand.
The inclusion $ (\cp)^{\vee m} \hookrightarrow \cpk$  is well defined since $\K$ contains all  
$\{i\}, \,\,i = 1, \ldots, m$.

Consider the Whitehead product of $\mu_i$ and $\mu_j$, $i \neq j$:
\begin{multline*}
[\mu_i, \mu_j]_W\colon S^3 \cong \partial(D^2 \times D^2) \cong D^2 \times S^1 \cup S^1 \times D^2 \rightarrow S_i^2 \times pt \cup pt \times S_j^2\\
\cong 
(S^2)^{\partial \Delta (i, j)} \hookrightarrow 
(\cp)^{\partial \Delta (i, j)} \hookrightarrow \cpk.
\end{multline*}
Here the composite $S^3 \rightarrow S^2 \vee S^2 $ is the attaching map of the top cell of
$S^2 \times S^2$. The map  
$S^2 \vee S^2 \rightarrow \cpk$ coincides with $\mu_i \vee \mu_j$.

\begin{definition} \label{def_white_higher}
Let the
indices 
$i_1, \ldots, i_n$
be pairwise different.
Suppose that $\partial \Delta(i_1, \ldots, i_n) \subset \K$.
The
\textit{canonical higher Whitehead product} $[\mu_{i_1}, \ldots, \mu_{i_n}]$
is the element in $\pi_{2n-1}(\cpk)$ defined by
$$
[\mu_{i_1}, \ldots, \mu_{i_n}]\colon S^{2n-1} \cong 
\partial (D_{i_1}^2 \times \cdots \times D_{i_n}^2) \cong 
\bigcup\limits_{k=1}^n (D^2_{i_1} \times \cdots \times S^1_{i_k} \times \cdots \times D^2_{i_n}) \rightarrow
$$
$$ \rightarrow
\bigcup\limits_{k=1}^n (S^2_{i_1} \times \cdots \times pt \times \cdots \times S^2_{i_n})
\cong (S^2)^{\partial \Delta (i_1, \ldots, i_n)} 
\hookrightarrow (\cp)^{\partial \Delta (i_1, \ldots, i_n)} \hookrightarrow \cpk.
$$
\end{definition}

Here the map to the fat wedge $S^{2n-1} \rightarrow 
(S^2)^{\partial \Delta (i_1, \ldots, i_n)}
\cong T(S^2, \ldots, S^2)$ 
coincides with the attaching map of the top cell of the product $(S^2)^n.$
In the case of the Davis--Januszkiewicz space $\cpk$,
there is a canonical map from the fat wedge $T(S^2, \ldots, S^2) \cong (S^2)^{\partial \Delta (i_1, \ldots, i_n)} \hookrightarrow \cpk$ (the indices $i_1, \ldots, i_n$ are pairwise different). 
Since the map
$(S^2)^{\partial \Delta (i_1, \ldots, i_n)} \hookrightarrow \cpk$
extends the map
$$\mu_{i_1} \vee \cdots \vee \mu_{i_n}\colon S^2 \vee \cdots \vee S^2 \hookrightarrow \cpk,$$
the canonical higher Whitehead product
$[\mu_{i_1}, \ldots, \mu_{i_n}]$
is contained in the (general) Whitehead product
$[\mu_{i_1}, \ldots, \mu_{i_n}]_W$, defined as a set in Definition \ref{def_whitehead}.

The canonical higher Whitehead product 
$[\mu_{i_1}, \ldots, \mu_{i_n}]$
is well defined if and only if $\partial\Delta(i_1, \ldots, i_n) \subset \K$.
It is easy to show that this homotopy class equals zero when
$\Delta(i_1, \ldots, i_n) \subset \K$. 

For the general higher Whitehead product
$[\mu_{i_1}, \ldots, \mu_{i_n}]_W$
to be well defined,
it is necessary that 
 $[\mu_{i_1}, \ldots, \widehat{\mu_{i_k}}, \ldots, \mu_{i_n}]_W = 0, \,\,k = 1, \ldots, n$.

\begin{proposition}
The (general) higher Whitehead product $[\mu_{i_1}, \ldots, \mu_{i_n}]_W$ is
\begin{itemize}
\item[(1)] defined in $\pi_{2n-1}(\cpk)$
if and only if $\partial \Delta (i_1, \ldots, i_n) \subset \K$;
\item[(2)]
trivial if and only if $\Delta (i_1, \ldots, i_n) \subset  \K$.
\end{itemize}
\end{proposition}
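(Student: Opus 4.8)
The plan is to prove both equivalences together by induction on $n$, since the "only if" direction of~(1) for $n$ indices will rely on the triviality criterion~(2) for $n-1$ indices. Write $I=\{i_1,\dots,i_n\}$. I would dispose first of the two "if" directions, which are the easy ones and amount to exhibiting explicit extensions $f$ of $g=\mu_{i_1}\vee\cdots\vee\mu_{i_n}$ over the fat wedge $T(S^2,\dots,S^2)\cong(S^2)^{\partial\Delta(I)}$. If $\partial\Delta(I)\subset\K$, then $(S^2)^{\partial\Delta(I)}$ already maps into $\cpk$ via the inclusion $(\cp)^{\partial\Delta(I)}\hookrightarrow\cpk$, so the product set is nonempty and the backward implication of~(1) holds. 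If in addition $I\in\K$, this extension prolongs over the entire product $(S^2)^{n}\hookrightarrow\prod_{j\in I}\cp\hookrightarrow\cpk$, where $\omega$ bounds the top cell; the resulting $f\circ\omega$ is then null-homotopic, so $0$ lies in the set and the backward implication of~(2) holds.

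For the necessity in~(1) I would use the condition recorded just before the statement: if $[\mu_{i_1},\dots,\mu_{i_n}]_W$ is defined, then each codimension-one subproduct $[\mu_{i_1},\dots,\widehat{\mu_{i_k}},\dots,\mu_{i_n}]_W$ must be defined and equal to $0$. The inductive hypothesis~(2) applied to the $n-1$ indices $I\setminus\{i_k\}$ then forces $I\setminus\{i_k\}\in\K$ for every $k$. As $\K$ is closed under subsets and each proper subset of $I$ lies in some $I\setminus\{i_k\}$, this says precisely that $\partial\Delta(I)\subset\K$, completing~(1).

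The genuinely hard part is the forward implication of~(2): assuming the product is defined (so $\partial\Delta(I)\subset\K$) but $I\notin\K$, I must show that the \emph{entire} coset avoids $0$. First I would reduce to the universal case $\K=\partial\Delta(I)$: the full subcomplex of $\K$ on $I$ is then exactly $\partial\Delta(I)$, and the standard retraction $\cpk\to(\cp)^{\partial\Delta(I)}$ onto the corresponding coordinate polyhedral product sends the coset in $\cpk$ into the coset in $(\cp)^{\partial\Delta(I)}$, so it suffices to exclude $0$ there. In that space the homotopy fibre is $\mathcal{Z}_{\partial\Delta(I)}\cong S^{2n-1}$, and since the base $(\cp)^{n}$ is a $K(\Z^{n},2)$ the long exact sequence (as in the lifting Corollary) yields $\pi_{2n-1}\bigl((\cp)^{\partial\Delta(I)}\bigr)\cong\pi_{2n-1}(S^{2n-1})\cong\Z$, generated by the canonical product $[\mu_{i_1},\dots,\mu_{i_n}]$. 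To see the coset misses $0$ I would pass to the Pontryagin algebra: the adjoint of the product is an iterated Samelson product whose Hurewicz image in $H_{*}(\Omega\cpk)$ is the corresponding iterated commutator of the generators, and the cobar model $\cobar H_{*}(\cpk)$ exhibits this commutator as a nonzero homology class exactly when $I\notin\K$ (when $I\in\K$ it is the boundary $\partial b_{I}$, as in the computation behind Theorem~\ref{th_wh}). The main obstacle is this last step, namely that the indeterminacy of the higher Whitehead product could a priori reach $0$; I expect to overcome it by checking that every representative shares the same nonzero leading commutator, the indeterminacy being carried by decomposable classes that vanish in degree $2n-2$ of $H_{*}(\Omega\cpk)$.
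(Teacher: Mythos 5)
The paper states this proposition without proof --- it is quoted as a known fact (essentially from \cite{AbPa}, \cite{PR}) --- so there is no in-paper argument to compare against; I assess your proposal on its own merits. The two ``if'' directions and the induction deriving the ``only if'' of (1) from (2) in one fewer variable are correct and complete: they use exactly the canonical representative over $(S^2)^{\partial\Delta(I)}$ and the canonical extension over the disk that the paper constructs, together with the stated necessary condition on the codimension-one sub-products. The reduction of the ``only if'' of (2) to the universal case $\K=\partial\Delta(I)$ via the retraction onto the full subcomplex on $I$ is also sound.

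The genuine gap is the final step of the ``only if'' of (2). You must show that \emph{every} extension $f$ of $g$ over the fat wedge has $f\circ\omega\not\simeq 0$, and your proposed mechanism --- that all representatives share the same leading commutator because ``the indeterminacy is carried by decomposable classes that vanish in degree $2n-2$'' --- is neither proved nor correct as stated: $H_{2n-2}\bigl(\Omega(\cp)^{\partial\Delta(I)}\bigr)\cong \bigl(H_{*}(\Omega S^{2n-1})\otimes\Lambda[u_1,\ldots,u_n]\bigr)_{2n-2}$ does contain the decomposable $u_1u_2$ when $n=2$, and in general the danger is not the presence of decomposables but that the coefficient of the indecomposable class in the Hurewicz image of the adjoint of $f\circ\omega$ might vanish for a non-canonical $f$; you never show this image is independent of $f$. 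Two standard ways to close the gap. (i) Cohomological, needing no reduction and no loop spaces: if some $f\circ\omega\simeq 0$, then $f$ prolongs to $\bar f\colon (S^2)^n=T\cup_\omega e^{2n}\to\cpk$; since $\bar f$ restricts to $\mu_{i_1}\vee\cdots\vee\mu_{i_n}$ on the wedge, $\bar f^{*}(v_{i_j})=a_j$, hence $\bar f^{*}(v_{i_1}\cdots v_{i_n})=a_1\cdots a_n\neq 0$ in $H^{2n}((S^2)^n)$, contradicting $v_{i_1}\cdots v_{i_n}=0$ in $H^{*}(\cpk)\cong\Z[\K]$ when $I\notin\K$. (ii) After your reduction, observe that $\pi_k\bigl((\cp)^{\partial\Delta(I)}\bigr)\cong\pi_k(\mathcal Z_{\partial\Delta(I)})=\pi_k(S^{2n-1})=0$ for $3\le k\le 2n-2$, while the relative cells of the pair (fat wedge, wedge) lie in dimensions $4,\ldots,2n-2$; hence all extensions of $g$ are homotopic rel the wedge, the product set is the singleton $\{[\mu_{i_1},\ldots,\mu_{i_n}]\}$, and its nontriviality is \cite[Th.~5.1]{AbPa}, which the paper quotes. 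Either patch completes your argument; without one of them the hard implication is not established.
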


Further we consider a  bracket sequence with 2-dimensional 
classes
$\mu_i$ such that
this sequence
contains only pairwise different indices, for example: 
$$
\Bigl[\mu_1, \mu_2, [\mu_3, \mu_4, \mu_5], \bigl[\mu_6, [\mu_7, \mu_8, \mu_9],  \mu_{10}\bigr], [\mu_{11}, \mu_{12}] \Bigr].
$$
%By induction, we will define the canonical higher Whitehead product % $w$ corresponding to the bracket sequence. Also we will define 
%the minimal simplicial complex $\partial \Delta_w$,
%for which the homotopy class $w$ is well defined.

\begin{construction} {\cite[Constr. 4.4]{AbPa}} \label{const_siml+compl}
Here we define the substitution simplicial complex $\partial \Delta_w$ corresponding to a bracket sequence $w$
with pairwise different indices.
%By induction, we define the simplicial complex 
%$\partial \Delta_w$,  which corresponds to the bracket sequence $w$.
The bracket $[\mu_{i_1}, \ldots, \mu_{i_n}]$ corresponds to the
complex $\partial\Delta (i_1, \ldots, i_n)$.
%The base of induction is the following.
%e assign the simplicial complex $\partial\Delta (i_1, \ldots, i_n)$
%to the bracket $[\mu_{i_1}, \ldots, \mu_{i_n}]$.
Suppose we have a bracket sequence
 $w$  of the form
$$
[w_1, \ldots, w_q, \mu_{i_1}, \ldots, \mu_{i_p}],
$$ where
$w_1, \ldots, w_q$ are bracket sequences with the corresponding simplicial complexes 
$\partial \Delta_{w_1}, \ldots,
\partial \Delta_{w_q}$.
%Then the simplicial complex 
%$\partial \Delta_{w_i}$ is well defined and corresponds to
%$w_i$. 
Then we assign to the the bracket sequence $w$ the following substitution simplicial complex  (see Definition \ref{def_substit}):
$$
\partial \Delta_w :=
\partial\Delta (\partial \Delta_{w_1}, \ldots,
\partial \Delta_{w_q}, i_1, \ldots, i_p).
$$
We also define
$$
\Delta_w :=
\Delta (\partial \Delta_{w_1}, \ldots,
\partial \Delta_{w_q}, i_1, \ldots, i_p).
$$ 

\end{construction}

\begin{definition} \label{def_iter_whit}
Assume that $\partial \Delta_w \subset \K$.
The
\textit{canonical higher Whitehead product} 
 $w$ that corresponds to the bracket sequence
 $[w_1, \ldots, w_q, \mu_{i_1}, \ldots, \mu_{i_p}]$
 is defined inductively as follows.
The  bracket $[\mu_{i_1}, \ldots, \mu_{i_n}] 
\in \pi_{2n-1}(\cpk)$ 
 is
described in Definition \ref{def_white_higher}.
By induction, 
suppose that canonical higher Whitehead products $w_1, \ldots, w_q,$
$\deg w_i = k_i$,
are already defined. 
Following \cite[Lemma~3.2]{AbPa}, define $[w_1, \ldots, w_q, \mu_{i_1}, \ldots, \mu_{i_p}]$ as the following element of
 $\pi_{*}(\cpk)$:
 %\multiline
$$
[w_1, \ldots, w_q, \mu_{i_1}, \ldots, \mu_{i_p}]:
S^{k_1 + \cdots + k_q + 2p -1} \cong 
\partial (D^{k_1} \times \cdots \times D^{k_q}
\times D^2_{i_1} \times \cdots \times D^2_{i_p} )
$$
$$
\cong 
\Biggl(
\Bigl(\bigcup\limits_{i=1}^q (D^{k_1} \times \cdots \times S^{k_i-1} \times \cdots \times D^{k_q}) \Bigr) \times D^2_{i_1} \times \cdots \times D^2_{i_p}
\Biggr)
 \, \cup \, \quad \quad
$$
$$
\quad \quad \, \cup \, \Biggl(
D^{k_1} \times \cdots \times D^{k_q} \times
\Bigl(
\bigcup\limits_{j=1}^p (D^2_{i_1} \times \cdots \times S^1_{i_j} \times \cdots \times D^2_{i_p})
\Bigr) \Biggr)
$$
$$
\xrightarrow{\alpha}
\Biggl(
\Bigl(\bigcup\limits_{i=1}^q (S^{k_1} \times \cdots \times pt \times \cdots \times S^{k_q}) \Bigr) \times (D^2, S^1)^{ \Delta (i_1, \ldots, i_p)} \Biggr) \, \cup \,
\quad \quad
$$
$$
\quad \quad
\, \cup \, \Biggl( S^{k_1} \times \cdots \times S^{k_q} \times
(D^2, S^1)^{\partial \Delta (i_1, \ldots, i_p)}  \Biggr)
$$
$$
\xrightarrow{\beta}
\Biggl(
\Bigl(\bigcup\limits_{i=1}^q (S^{k_1} \times \cdots \times pt \times \cdots \times S^{k_q}) \Bigr) \times (S^2)^{ \Delta (i_1, \ldots, i_p)} \Biggr) \, \cup \,
\quad \quad
$$
$$
\quad \quad
\, \cup \, \Biggl( S^{k_1} \times \cdots \times S^{k_q} \times
(S^2)^{\partial \Delta (i_1, \ldots, i_p)} \Biggr)
$$
$$
\xrightarrow{\gamma}
\Biggl(
\Bigl(\bigcup\limits_{i=1}^q ((S^{2})^{\partial \Delta_{w_1}} \times \cdots \times pt \times \cdots \times (S^{2})^{\partial \Delta_{w_q}}) \Bigr) \times (S^2)^{ \Delta (i_1, \ldots, i_p)}
\Biggr) \, \cup \,
\quad \quad
$$
$$
\quad \quad
\, \cup \, \Biggl( (S^{2})^{\partial \Delta_{w_1}} \times \cdots \times (S^{2})^{\partial \Delta_{w_q}} \times
(S^2)^{\partial \Delta (i_1, \ldots, i_p)} \Biggr)
$$
$$
\cong
\Biggl(
(S^{2})^{\partial\Delta (\partial \Delta_{w_1}, \ldots, \partial \Delta_{w_q})}  \times (S^2)^{ \Delta (i_1, \ldots, i_p)} \Biggr)
\, \cup \, 
\Biggl((S^{2})^{\Delta (\partial \Delta_{w_1}, \ldots, \partial \Delta_{w_q})}
\times
(S^2)^{\partial \Delta (i_1, \ldots, i_p)} \Biggr)
$$
$$
\cong
(S^{2})^{\partial\Delta (\partial \Delta_{w_1}, \ldots, \partial \Delta_{w_q}, i_1, \ldots, i_p)}
\cong
(S^{2})^{\partial\Delta_w} 
\cong
(\mathbb{C}P^1)^{\partial\Delta_w} 
\xhookrightarrow{\delta} \cpk.
$$

Here the map $\alpha$
is induced by collapsing the boundaries of the disks
$D^{k_i}$ to the point.
The analogous map of the pairs $(D^2, S^1) \rightarrow (S^2, pt)$
induces the map of polyhedral products
$(D^2, S^1)^\K \rightarrow (S^2, pt)^\K$, which we use to define the map  $\beta$. 
The map $\gamma$ is defined  using the maps 
$S^{k_i} \xrightarrow{w_i} (S^2)^{\partial \Delta_{w_i}} 
\hookrightarrow 
(\cp)^{\partial \Delta_{w_i}}$ from the previous step.
Since $\K \supset \partial \Delta_{w}
=
\partial\Delta (\partial \Delta_{w_1}, \ldots,
\partial \Delta_{w_q}, i_1, \ldots, i_p)$, the last map $\delta$ is well defined.
%The latest map  $\delta$ is induced by the inclusion 
%$\partial \Delta_w \subset \K$.
\end{definition}

\begin{proposition}
Suppose that  $\partial \Delta_w \subset \K$. 
Then the canonical higher Whitehead product $w$ is contained
in the corresponding (general) Whitehead product.
\end{proposition}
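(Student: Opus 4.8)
The plan is to exhibit the canonical higher Whitehead product $w$ of Definition~\ref{def_iter_whit} in the form $f \circ \omega$, where $\omega$ is the attaching map of the top cell appearing in Definition~\ref{def_whitehead} and $f$ is an extension of the wedge map $g = w_1 \vee \cdots \vee w_q \vee \mu_{i_1} \vee \cdots \vee \mu_{i_p}$ over the fat wedge. Once this is achieved, membership of $w$ in the general Whitehead product $[w_1, \ldots, w_q, \mu_{i_1}, \ldots, \mu_{i_p}]_W$ is immediate from the definition of the latter as the set of composites $f \circ \omega$; in particular the construction of $f$ already shows this set to be nonempty.

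Write $k_i = \deg w_i$ and $N = k_1 + \cdots + k_q + 2p$. First I would identify the fat wedge $T = T(S^{k_1}, \ldots, S^{k_q}, S^2, \ldots, S^2)$ with the polyhedral product $(\underline{S})^{\partial\Delta^{q+p-1}}$ on the $q+p$ spheres in question; concretely, $T$ is the subspace of $S^{k_1} \times \cdots \times S^{k_q} \times S^2 \times \cdots \times S^2$ of points having at least one coordinate at the basepoint, and its top-cell attaching map $\omega\colon S^{N-1} \cong \partial(D^{k_1} \times \cdots \times D^{k_q} \times D^2 \times \cdots \times D^2) \to T$ is the restriction to the boundary of the product of the collapse maps of the disk factors onto the corresponding spheres.

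The key step is to recognise the first two arrows of Definition~\ref{def_iter_whit} as this attaching map. The two summands making up the target of $\beta$ are exactly the locus where some $S^{k_i}$-coordinate sits at the basepoint and the locus where some $S^2$-coordinate sits at the basepoint, so their union is precisely $T$. The map $\alpha$ collapses the boundary spheres $S^{k_i-1}$ of the disks $D^{k_i}$, while $\beta$, induced by the quotient of pairs $(D^2, S^1) \to (S^2, pt)$, collapses the boundary circles of the disks $D^2$; carrying out these two collapses in succession reproduces the single collapse defining $\omega$, so that $\omega = \beta \circ \alpha$ under the identification above. This is where I expect the main obstacle to lie: it is a point-set verification that the two-stage collapse coincides with the standard one-stage attaching map, that the domain decomposition written in Definition~\ref{def_iter_whit} is genuinely the boundary $\partial(D^{k_1} \times \cdots \times D^{k_q} \times D^2 \times \cdots \times D^2)$, and that $\alpha$ and $\beta$ are consistent on the overlap consisting of points lying simultaneously on two distinct bounding spheres.

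Finally I would assemble the extension $f$ from the remaining arrows. Taking $f$ to be the composite of $\gamma$, the identifying homeomorphisms, and $\delta$, the map $\gamma$ applies $w_i\colon S^{k_i} \to (S^2)^{\partial\Delta_{w_i}}$ on each $S^{k_i}$ factor and leaves the $S^2$ factors untouched, the homeomorphisms recognise the result as $(S^2)^{\partial\Delta_w} \cong (\mathbb{C}P^1)^{\partial\Delta_w}$, and $\delta$ is the inclusion into $\cpk$, which is available precisely because $\partial\Delta_w \subset \K$. Restricting $f$ to the $i$-th leg of $T$ returns $w_i$ and restricting it to the leg of the $j$-th two-sphere returns $\mu_{i_j}$, so $f$ extends $g$; its well-definedness on all of $T$ is guaranteed by the functoriality of the polyhedral product with respect to the based maps $w_i$. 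Combining the two steps gives $w = f \circ \omega$ with $f$ extending $g$, whence $w \in [w_1, \ldots, w_q, \mu_{i_1}, \ldots, \mu_{i_p}]_W$, which is the assertion.
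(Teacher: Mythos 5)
Your proposal is correct and follows essentially the same route as the paper: identify the domain of $\gamma$ with the fat wedge $T(S^{k_1},\ldots,S^{k_q},S^2_{i_1},\ldots,S^2_{i_p})$, recognise $\beta\circ\alpha$ as the attaching map of the top cell, take $f=\delta\circ\gamma$ as the extension of the wedge map, and conclude $w=f\circ\omega$. The paper asserts the point-set identifications you flag as the main obstacle without further verification, so your treatment is, if anything, slightly more careful on that point.
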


\begin{proof}
The domain of the map $\gamma$ from Definition \ref{def_iter_whit},
$$
\biggl(
\Bigl(\bigcup\limits_{i=1}^q (S^{k_1} \times \cdots \times pt \times \cdots \times S^{k_q}) \Bigr) \times (S^2)^{ \Delta (i_1, \ldots, i_p)} \biggr) 
 \cup  \biggl( S^{k_1} \times \cdots \times S^{k_q} \times
(S^2)^{\partial \Delta (i_1, \ldots, i_p)} \biggr)
$$
is the fat wedge $T(S^{k_1}, \ldots, S^{k_q}, S^2_{i_1}, \ldots, S^2_{i_p})$.
The composition $\beta \circ \alpha$
$$
S^{k_1 + \cdots + k_q + 2p -1} \xrightarrow{\beta \circ \alpha}
T(S^{k_1}, \ldots, S^{k_q}, S^2_{i_1}, \ldots, S^2_{i_p})
$$
coincides with the attaching map of the top cell of the product
$S^{k_1} \times \cdots \times S^{k_q}
\times S^2_{i_1} \times \cdots \times S^2_{i_p}$.
The restriction of the map $\delta \circ \gamma$
$$
T(S^{k_1}, \ldots, S^{k_q}, S^2_{i_1}, \ldots, S^2_{i_p}) 
\xrightarrow{\gamma} (S^2)^{\partial \Delta_w}
\xhookrightarrow{\delta} \cpk
$$
to the wedge
$S^{k_1} \vee \cdots \vee S^{k_q}
\vee S^2_{i_1} \vee \cdots \vee S^2_{i_p}$
is the wedge of maps
$w_1, \ldots, w_q, \mu_{i_1}, \ldots, \mu_{i_p}$. 
Therefore, the composition
 $w = \delta \circ \gamma \circ \beta \circ \alpha$
\begin{multline*}
w = [w_1, \ldots, w_q, \mu_{i_1}, \ldots, \mu_{i_p}]:
S^{k_1 + \cdots + k_q + 2p -1} 
\\
\xrightarrow{\beta \circ \alpha}
T(S^{k_1}, \ldots, S^{k_q}, S^2_{i_1}, \ldots, S^2_{i_p})
\xrightarrow{\delta \circ \gamma}  \cpk,
\end{multline*}
is indeed an element of the (general) higher Whitehead product
from  Definition~\ref{def_whitehead}.
\end{proof}

\begin{corollary}
Assume that $\partial \Delta_w \subset \K$.
Then the (general) Whitehead product $w$ is nonempty.
\end{corollary}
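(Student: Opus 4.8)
The plan is to deduce the statement directly from the preceding Proposition, which already exhibits a distinguished element of the (general) higher Whitehead product; nonemptiness is then a formal consequence. Recall from Definition~\ref{def_whitehead} that the (general) higher Whitehead product $[w_1, \ldots, w_q, \mu_{i_1}, \ldots, \mu_{i_p}]_W$ is defined as the set of composites $f \circ \omega$, where $f$ ranges over all extensions of the wedge map $g = w_1 \vee \cdots \vee w_q \vee \mu_{i_1} \vee \cdots \vee \mu_{i_p}$ to the fat wedge $T(S^{k_1}, \ldots, S^{k_q}, S^2_{i_1}, \ldots, S^2_{i_p})$. Thus this set is nonempty precisely when at least one such extension $f$ exists.

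The one step I would carry out carefully is to confirm that, under the hypothesis $\partial \Delta_w \subset \K$, the canonical higher Whitehead product $w$ of Definition~\ref{def_iter_whit} is genuinely defined, i.e.\ that the inductive construction produces an actual homotopy class rather than stalling. For this it suffices to observe that $\partial \Delta_{w_i} \subset \partial \Delta_w$ for each $i = 1, \ldots, q$: any simplex $I \in \partial \Delta_{w_i}$ occurs as a simplex $I_i$ in the substitution complex $\partial \Delta_w = \partial\Delta(\partial\Delta_{w_1}, \ldots, \partial\Delta_{w_q}, i_1, \ldots, i_p)$, since the singleton $\{i\}$ belongs to the indexing complex $\partial\Delta(1, \ldots, q+p)$ (see Definition~\ref{def_substit}). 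Hence $\partial \Delta_{w_i} \subset \partial \Delta_w \subset \K$, so each sub-bracket $w_i$ is defined by the inductive hypothesis, the base case $[\mu_{i_1}, \ldots, \mu_{i_n}]$ is available because $\partial\Delta(i_1, \ldots, i_n) \subset \K$, and the composite $\delta \circ \gamma \circ \beta \circ \alpha$ of Definition~\ref{def_iter_whit} yields a bona fide element $w \in \pi_*(\cpk)$.

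With this in hand, the conclusion follows at once: the preceding Proposition asserts that this canonical element $w$ lies in the (general) higher Whitehead product $[w_1, \ldots, w_q, \mu_{i_1}, \ldots, \mu_{i_p}]_W$. Since $w$ is an honest homotopy class, the defining set contains at least this one element and is therefore nonempty. I do not anticipate any genuine obstacle here; the substantive content — exhibiting an explicit extension to the fat wedge, namely the map $\delta \circ \gamma$ restricted appropriately — was already achieved in the construction underlying the Proposition, and the Corollary merely records that the existence of a canonical representative forces the (general) product to be nonempty.
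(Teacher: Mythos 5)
Your proposal is correct and follows exactly the route the paper intends: the Corollary is an immediate consequence of the preceding Proposition, since the canonical higher Whitehead product $w$ (well defined because $\partial\Delta_w \subset \K$, and hence $\partial\Delta_{w_i} \subset \partial\Delta_w \subset \K$ for each sub-bracket) is an honest element of the set $[w_1,\ldots,w_q,\mu_{i_1},\ldots,\mu_{i_p}]_W$. The paper leaves this step implicit; your verification that the inductive construction does not stall is a reasonable, if optional, addition.
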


\begin{question}
Let
$w= [w_1, \ldots, w_q, \mu_{i_1}, \ldots, \mu_{i_p}]_W $ be a (general) Whitehead product. 
Suppose that the product $w$ is defined in $\pi_{*}(\cpk),$ 
in other words, there exists at least one extension
from the wedge to the fat wedge
in  Definition~\ref{def_whitehead}. 
Does it imply that $\partial \Delta_w \subset~\K$?
\end{question}

\begin{theorem}[{\cite[Th.~5.1]{AbPa}}]
Let $\K = \partial \Delta_w$. Then the canonical higher Whitehead product
$w = [w_1, \ldots, w_q, \mu_{i_1}, \ldots, \mu_{i_p}]$ is nontrivial in 
$\pi_{k_1 + \cdots + k_q + 2p -1}(\cpk)$.
\end{theorem}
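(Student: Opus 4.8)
The plan is to reduce the statement to the nonvanishing of a Hurewicz image in the moment--angle complex $\zk$ and then to detect that image through the combinatorics of the substitution complex $\partial\Delta_w$. Write $d=k_1+\cdots+k_q+2p-1=\deg w$; since the smallest admissible bracket $[\mu_i,\mu_j]$ already gives $d=3$, we always have $d\ge 3$. By the Corollary above, every map $S^n\to\cpk$ with $n\ge 3$ lifts through the fibre inclusion $\zk\to\cpk$, so the canonical product $w$ lifts to a map $\tilde w\colon S^d\to\zk$. Moreover $(\cp)^m=K(\Z^m,2)$ has vanishing homotopy in degrees $>2$, so the homotopy fibration $\zk\to\cpk\to(\cp)^m$ yields an isomorphism $\pi_d(\zk)\xrightarrow{\ \cong\ }\pi_d(\cpk)$ for $d\ge 3$, carrying $\tilde w$ to $w$. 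Hence it suffices to prove $\tilde w\neq 0$ in $\pi_d(\zk)$, and I would do this by exhibiting a nonzero \emph{unstable} Hurewicz image $h(\tilde w)\in H_d(\zk;\Z)$ (detection into $\zk$ itself, as opposed to into $\Omega\cpk$, is what makes this feasible for an iterated bracket).

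Next I would locate the target homology class. Using the Hochster decomposition $\widetilde H_*(\zk)\cong\bigoplus_{J\subseteq[m]}\widetilde H_{*-|J|-1}(\K_J)$, the summand for the full vertex set $J=[m]$ contributes $\widetilde H_{d-m-1}(\partial\Delta_w)$ in degree $d$. I would show by induction on the depth of the bracket $w$ that $\widetilde H_{d-m-1}(\partial\Delta_w)\neq 0$: the base case $\partial\Delta(i_1,\ldots,i_n)\simeq S^{n-2}$ is immediate (here $d-m-1=n-2$), and in the inductive step the substitution complex $\partial\Delta(\partial\Delta_{w_1},\ldots,\partial\Delta_{w_q},i_1,\ldots,i_p)$ carries a distinguished top class built from the inductively given top classes of the inner complexes $\partial\Delta_{w_j}$ together with the sphere class of the outer $\partial\Delta^{q+p-1}$. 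The cleanest way to see this is a Mayer--Vietoris argument for the two full subcomplexes appearing in the last identifications of Definition~\ref{def_iter_whit} (the ``inner block fully present'' piece is a cone, so the top homology comes precisely from the outer boundary sphere paired with the inner top classes). This produces a distinguished generator $z\in H_d(\zk)$.

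The heart of the proof, and the step I expect to be the main obstacle, is to identify $h(\tilde w)$ with a nonzero multiple of $z$. For this I would run the inductive construction of Definition~\ref{def_iter_whit} through the cellular/Hochster chain model of $\zk$, but now taking the composite $\delta\circ\gamma\circ\beta\circ\alpha$ into the moment--angle analogue $(D^2,S^1)^{\partial\Delta_w}=\zk$ rather than into $\cpk$. The fundamental cycle of the source $S^d\cong\partial(D^{k_1}\times\cdots\times D^{k_q}\times D^2_{i_1}\times\cdots\times D^2_{i_p})$ should map to the generating cycle of the $J=[m]$ summand: the maps $\gamma$ feed in the fundamental cycles $h(\tilde w_j)$ supplied by the inductive hypothesis, while the outer attaching map $\beta\circ\alpha$ supplies the sphere class of $\partial\Delta^{q+p-1}$. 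Verifying that these combine, with a controllable sign, into $z$ and do not degenerate is the delicate point, since it requires matching the geometry of the iterated attaching maps against the simplicial cycles in the Hochster model block by block.

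Finally, once $h(\tilde w)\neq 0$ is established, $\tilde w\neq 0$ in $\pi_d(\zk)$, and therefore $w\neq 0$ in $\pi_d(\cpk)$ by the isomorphism of the first paragraph, with the detection being integral rather than merely rational. As an alternative packaging of the same detection I would invoke the Bahri--Bendersky--Cohen--Gitler stable splitting $\Sigma\zk\simeq\bigvee_J\Sigma^{|J|+1}|\K_J|$, which isolates the summand $\Sigma^{m+1}|\partial\Delta_w|$ and reduces nonvanishing of $\Sigma\tilde w$ to the already-computed top homology of $\partial\Delta_w$; since $\Sigma\tilde w\neq 0$ forces $\tilde w\neq 0$, this gives the conclusion as well, although it does not circumvent the core task of showing that $\tilde w$ actually hits the top cell of that summand.
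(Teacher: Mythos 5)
This theorem is imported into the paper from \cite[Th.~5.1]{AbPa} and is not proved here, so the only meaningful comparison is with the proof in that source. Your strategy is essentially the same as theirs: lift $w$ along the fibre inclusion $\zk\to\cpk$ using $\pi_d(\zk)\cong\pi_d(\cpk)$ for $d\ge 3$, and detect the lift $\tilde w$ by its Hurewicz image in $H_d(\zk)$, located in the Hochster summand $\widetilde H_{d-m-1}(\K_J)$ for the full vertex set $J=[m]$. Your degree bookkeeping checks out ($d-m-1=\dim\partial\Delta_w$, with base case $\partial\Delta(i_1,\ldots,i_n)\simeq S^{n-2}$), and your observation that detection must happen in $\zk$ rather than in $\cpk$ or $\Omega\cpk$ is exactly the right point: $\tilde w$ is not itself a Whitehead product of classes in $\zk$, so the classical vanishing of Hurewicz images of Whitehead products does not obstruct the argument.

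The weakness is that you stop precisely where the actual content of the proof lies. Showing $\widetilde H_{d-m-1}(\partial\Delta_w)\neq 0$ by Mayer--Vietoris on the decomposition $\partial\Delta_w=\bigl(\partial\Delta(\partial\Delta_{w_1},\ldots,\partial\Delta_{w_q})*\Delta(i_1,\ldots,i_p)\bigr)\cup\bigl(\Delta(\partial\Delta_{w_1},\ldots,\partial\Delta_{w_q})*\partial\Delta(i_1,\ldots,i_p)\bigr)$ is fine (note the first piece, not the second, is the contractible one when $p\ge 1$, since $\Delta(\partial\Delta_{w_1},\ldots,\partial\Delta_{w_q})=\partial\Delta_{w_1}*\cdots*\partial\Delta_{w_q}$ is a join of wedges of spheres and is not a cone). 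But the identification of $h(\tilde w)$ with a generator $z$ of that top summand --- which you yourself flag as ``the delicate point'' --- is the entire theorem; everything else is routine. In \cite{AbPa} this is carried out by an explicit inductive chain-level computation of the image of the fundamental class of $\partial(D^{k_1}\times\cdots\times D^{k_q}\times D^2_{i_1}\times\cdots\times D^2_{i_p})$ in the cellular chains of $(D^2,S^1)^{\partial\Delta_w}$, and without that computation (or some substitute for it) your argument does not yet establish nontriviality. As written, the proposal is a correct and well-aimed outline of the known proof rather than a proof.
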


\begin{construction}
Let
$w$ be a
canonical higher Whitehead product.
Suppose that $\K \supset \Delta_w$.
Then the \textit{canonical extension} of $w$ to the disk is defined:
$$
D^{k_1 + \cdots + k_q + 2p} \cong 
D^{k_1} \times \cdots \times D^{k_q}
\times D^2_{i_1} \times \cdots \times D^2_{i_p} \rightarrow 
S^{k_1} \times \cdots \times S^{k_q}
\times S^2_{i_1} \times \cdots \times S^2_{i_p}
$$
$$
\rightarrow
(S^{2})^{\partial \Delta_{w_1}} \times \cdots \times (S^{2})^{\partial \Delta_{w_q}} \times
S^2_{i_1} \times \cdots \times S^2_{i_p} \cong
(S^{2})^{\partial \Delta_{w_1}} \times \cdots \times (S^{2})^{\partial \Delta_{w_q}} \times
(S^2)^{ \Delta (i_1, \ldots, i_p)}
$$
$$
\cong (S^2)^{\partial \Delta_{w_1} * \cdots * \partial \Delta_{w_q} * \Delta (i_1, \ldots, i_p)}
\cong (S^2)^{\Delta (\partial \Delta_{w_1}, \ldots, \partial \Delta_{w_q}, 
i_1, \ldots, i_p)}
\hookrightarrow \cpk.
$$
This implies that $w = [w_1, \ldots, w_q, \mu_{i_1}, \ldots, \mu_{i_p}] =0.$

\end{construction}

It follows that a canonical higher Whitehead product
$w= [w_1, \ldots, w_q, \mu_{i_1}, \ldots, \mu_{i_p}] $ 
is well defined if and only if $\partial \Delta_w \subset \K$ and equals zero in $\pi_{*}(\cpk)$ if
$\Delta_w \subset \K$, as observed in \cite{AbPa}.

\begin{question}
Let
$w= [w_1, \ldots, w_q, \mu_{i_1}, \ldots, \mu_{i_p}]_W $ be a canonical
higher Whitehead product. Suppose that $w_j \neq 0$ for $j = 1, \ldots, q$ and $w = 0$  in $\pi_{*}(\cpk).$  Does it imply that
$\Delta_w \subset \K$?
\end{question}

The answer is positive for products of depth at most two:

\begin{theorem}[{\cite[Th.~5.2]{AbPa}}]
Let $w_j = [\mu_{j_1}, \ldots, \mu_{j_{p_j}} ],
\, j = 1, \ldots, q,$ be nontrivial canonical higher Whitehead
products. 
Consider the following higher Whitehead product:
$$w= [w_1, \ldots, w_q, \mu_{i_1}, \ldots, \mu_{i_p}]. $$
Then $w = 0$ in $\pi_{*}(\cpk)$ if and only if $\K$ contains
$\Delta_w$
as a subcomplex.
\end{theorem}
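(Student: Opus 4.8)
The statement is an equivalence whose two halves are of very different natures, and the plan is to treat them separately. The implication $\Delta_w\subseteq\K\Rightarrow w=0$ is already in hand: it is exactly the content of the canonical-extension Construction preceding the statement, which produces a nullhomotopy of $w$ (an extension over the disk $D^{k_1+\cdots+k_q+2p}$) as soon as $\Delta_w\subseteq\K$. So the entire burden falls on the reverse implication, which I would prove in contrapositive form: assuming $\partial\Delta_w\subseteq\K$ (so that $w$ is defined) but $\Delta_w\not\subseteq\K$, I would show $w\neq 0$ in $\pi_*(\cpk)$.

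First I would reduce to the full subcomplex on the vertices of $w$. Set $W=W_1\sqcup\cdots\sqcup W_q\sqcup\{i_1,\ldots,i_p\}$, where $W_j$ is the vertex set of $w_j$, and let $\K_W$ be the full subcomplex of $\K$ on $W$. By Definition~\ref{def_iter_whit} the product $w$ terminates in $\delta\colon(\cp)^{\partial\Delta_w}\hookrightarrow\cpk$, and since $\partial\Delta_w\subseteq\K_W\subseteq\K$ this factors as $w=\iota\circ w'$ with $w'\colon S^{2|W|-q-1}\to(\cp)^{\K_W}$. Using the standard retraction of a polyhedral product onto a full subcomplex, $r\colon\cpk\to(\cp)^{\K_W}$ with $r\iota\simeq\mathrm{id}$, one gets $r_*w=w'$, so it suffices to show $w'\neq 0$ in $(\cp)^{\K_W}$. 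Here the hypothesis that each $w_j$ is nontrivial enters essentially: nontriviality of $w_j$ means $\Delta_{w_j}\not\subseteq\K$, i.e.\ the vertex set $W_j$ is not a simplex of $\K$, so no simplex of $\K_W$ contains a full $W_j$; comparing with Definition~\ref{def_substit} this yields $\partial\Delta_w\subseteq\K_W\subseteq\Delta_w$, with $\K_W\neq\Delta_w$ since $\Delta_w\not\subseteq\K$. Thus the ambient complex is pinned strictly between $\partial\Delta_w$ and $\Delta_w$.

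The detection itself I would carry out homologically through the cobar model. By Proposition~\ref{prop_cobar_sph} and its $\cpk$-analogue, $H_*(\Omega(\cp)^{\K_W})\cong H\bigl(\cobar H_*((\cp)^{\K_W})\bigr)$, with free generators $b_J$ for $\varnothing\neq J\in\K_W$, $|b_J|=2|J|-1$, and the shuffle/commutator differential. Following the computation behind Theorem~\ref{th_wh}, the Hurewicz image $h(w')$ is represented by an explicit cycle $z$ concentrated in the single multidegree $W$ (each $\mu$ entering once) and in homological degree $2|W|-q-2$. The goal is to show $[z]\neq 0$. Since every generator has odd degree $2|J|-1$, any element of multidegree $W$ one degree above $z$ is a product of exactly $q+1$ generators whose index sets partition $W$; the canonical filling is a sum of terms $\pm\,b_\sigma\,\chi_{a_1}\cdots\chi_{a_q}$, where $\chi_a=b_{\{a\}}$ and $\sigma=(W_1\setminus\{a_1\})\sqcup\cdots\sqcup(W_q\setminus\{a_q\})\sqcup\{i_1,\ldots,i_p\}$ ranges over the maximal simplices of $\Delta_w$, and one checks $\partial$ of this combination equals $z$. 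These terms exist precisely when all such maximal faces $\sigma$ lie in $\K$, that is, precisely when $\Delta_w\subseteq\K$; when $\Delta_w\not\subseteq\K$ the relevant generators $b_\sigma$ are simply absent.

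The hard part will be the final clause: showing that when $\Delta_w\not\subseteq\K$ \emph{no} element whatsoever has boundary $z$, not merely that the canonical filling is unavailable. The multigrading and the oddness of all generators already restrict candidate fillings to $(q+1)$-fold products partitioning $W$, and the constraint $\K_W\subseteq\Delta_w$ (forcing $J\cap W_i\subsetneq W_i$ for every generator $b_J$ and every $i$) cuts the admissible partitions down drastically; I expect that a direct analysis of $\partial$ on this finite-dimensional space of candidates, organised by how each block $W_i$ is split, shows the coefficient of a suitably chosen monomial of $z$ can be produced only via a generator $b_\sigma$ with $\sigma$ a maximal simplex of $\Delta_w$, whence $\Delta_w\subseteq\K$. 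The extreme case $\K_W=\partial\Delta_w$, where there are no generators indexed outside $\partial\Delta_w$, serves as a sanity check and coincides with the preceding nontriviality theorem for $\K=\partial\Delta_w$. Once $[z]\neq 0$ is established we conclude $h(w')\neq 0$, hence $w'\neq 0$ and $w\neq 0$, which is the required contrapositive.
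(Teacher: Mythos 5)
First, a point of comparison that is really a non-comparison: the paper does not prove this statement. It is quoted verbatim from \cite[Th.~5.2]{AbPa}, so there is no internal proof to measure your argument against; I can only assess your proposal on its own terms and against the ingredients the paper does provide. Your forward implication is correct and is exactly the canonical-extension construction stated just before the theorem. Your reduction of the converse to the full subcomplex $\K_W$ via the retraction onto a full subcomplex, and the observation that nontriviality of each $w_j$ (equivalently $W_j\notin\K$) pins $\K_W$ strictly between $\partial\Delta_w$ and $\Delta_w$, are also correct.

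The gap is the step you yourself label ``the hard part'', and it is a genuine missing idea rather than a routine verification. Two things are unproven. (i) You never establish that the Hurewicz image of a general $w=[w_1,\ldots,w_q,\mu_{i_1},\ldots,\mu_{i_p}]$ in $H_*(\Omega(\cp)^{\K_W})$ is represented by the cycle $z$ you describe: the paper carries out that computation only for the single example $[[\mu_1,\mu_2,\mu_3],\mu_4,\mu_5]$ (Theorem~\ref{th_wh}), by building Adams--Hilton models for every map in the composite, and extending this to arbitrary $q,p,p_j$ is a nontrivial piece of bookkeeping that your sketch takes for granted. (ii) The core claim --- that $z$ admits no filling in $\cobar H_*((\cp)^{\K_W})$ when $\Delta_w\not\subseteq\K_W$ --- is only asserted as an expectation. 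Your heuristic that candidate fillings are $(q+1)$-fold products is right for degree reasons, but the combinatorics is subtler than your sketch suggests: in the paper's own example the formal filling is $[\chi_{123},\chi_{45}]+[\chi_{1234},\chi_5]+[\chi_{1235},\chi_4]$, whose index sets $\{1,2,3\}$, $\{1,2,3,4\}$, $\{1,2,3,5\}$ are \emph{not} simplices of $\Delta_w$ (each contains $W_1$), so your proposed canonical filling $\sum\pm b_\sigma\chi_{a_1}\cdots\chi_{a_q}$ over maximal $\sigma\in\Delta_w$ is a different object whose boundary you have not checked equals $z$, and the slogan ``terms exist precisely when $\Delta_w\subseteq\K$'' does not yet follow. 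Finally, a caveat on the method: Hurewicz detection in $H_*(\Omega X)$ only proves $w\neq 0$ when $[z]\neq 0$; if $[z]$ happened to vanish for some admissible $\K$ your strategy would be inconclusive, whereas \cite{AbPa} detects the class after lifting $w$ to the moment--angle complex and computing its Hurewicz image in $H_*(\zk)$. Until (i) and (ii) are supplied, the ``only if'' direction is not proved.
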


\begin{question}
Suppose
that the (general) higher Whitehead product \newline
$w= [w_1, \ldots, w_q, \mu_{i_1}, \ldots, \mu_{i_p}]_W $ is trivial and
$w_j $ are nontrivial for $ j = 1, \ldots, q$.
Does it imply that $\Delta_w \subset \K$?
\end{question}

\section{Cobar construction for polyhedral products}

We consider the cobar construction (the algebraic loop functor) $\cobar$
from the category of 1-connected differential graded coalgebras $DGC_{1,\Z}$ to
the category of differential graded connected algebras
$DGA_{0,\Z}$. 
Its right adjoint  is the bar construction (the algebraic classifying functor)
$\Bar$:
$$\cobar\colon DGC_{1,\Z} \leftrightarrows DGA_{0,\Z} : \Bar.$$

The \emph{cobar functor} $\cobar$ assigns to a dg coalgebra $(C,\partial_C)$ with $C_0=\mathbb Z$ and $C_1=0$ the dg algebra 
$$
\cobar C=(T(s^{-1}\overline C),\partial)
$$
that is the free associative algebra on the desuspension of the module 
$\overline C= \coker(C_0 \cong \Z \hookrightarrow C).$
The differential $\partial$ is given~by 
\begin{equation}\label{dcobar}
  \partial( s^{-1} c) = - s^{-1} \partial_C (c)+\sum_i (-1)^{|x_i|} s^{-1} x_i \otimes s^{-1} y_i
\end{equation}
where $c\in C$ with comultiplication  $\Delta c=c\otimes 1+1\otimes c+\sum_i x_i \otimes y_i$.

\begin{theorem}[{\cite{Adams}}]
For a simply connected pointed space $X$ and a commutative ring $\mathbf{k}$ with unit, there is a natural isomorphism of graded algebras
$$H(\cobar C_{*}(X; \mathbf{k})) \cong H_{*}(\Omega X; \mathbf{k}),$$
where $C_{*}(X; \mathbf{k})$ denotes the suitably reduced singular chain complex of $X$.
\end{theorem}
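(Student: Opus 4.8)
The plan is to realize the cobar construction as the differential graded algebra generated by a \emph{universal twisting cochain} valued in the cubical chains of the loop space, and then to prove that the induced algebra map is a quasi-isomorphism by comparing it with the path--loop fibration. Throughout I would work with the normalized cubical singular chains $\cux$ of the Moore loop space, so that concatenation of loops makes $\cux$ into a genuine differential graded algebra (the product being induced by the cubical shuffle map), and I would use the Alexander--Whitney diagonal to regard $C_{*}(X;\mathbf{k})$ as a dg coalgebra, as required by the functor $\cobar$ recalled above.

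First I would construct a natural degree $-1$ map $\psi_X\colon \overline{C}_{*}(X;\mathbf{k})\to \cux$, the Adams cobar map. Geometrically, $\psi_X$ assigns to a based singular $n$-simplex $\sigma\colon\Delta^n\to X$ an $(n-1)$-cube of Moore loops, obtained from the standard subdivision of $\Delta^n$ by letting the cube coordinates run over the cut points along the edge $v_0\to v_n$ and closing up the resulting paths through the faces of $\sigma$. The property to verify is the \emph{twisting cochain equation}
$$
\partial_{\cux}\circ\psi_X + \psi_X\circ\partial_C \;=\; \mu\circ(\psi_X\otimes\psi_X)\circ\Delta,
$$
where $\Delta$ is the Alexander--Whitney coproduct and $\mu$ the concatenation product on $\cux$. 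By the cobar--bar adjunction $\cobar\colon DGC_{1,\Z}\leftrightarrows DGA_{0,\Z}:\Bar$, such a twisting cochain corresponds bijectively and naturally to a morphism of differential graded algebras
$$
\theta_X\colon \cobar C_{*}(X;\mathbf{k})\longrightarrow \cux,
$$
which sends each generator $s^{-1}c$ to $\psi_X(c)$ and is then extended multiplicatively; naturality in $X$ is inherited from that of $\psi_X$.

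Finally I would show that $\theta_X$ is a quasi-isomorphism. The cleanest route is the Eilenberg--Moore comparison: since $\Omega X$ is the homotopy pullback $\pt\times_X\pt$ along the path fibration, its homology is computed by the derived cotensor product $\mathbf{k}\,\square^{h}_{C_{*}(X)}\,\mathbf{k}$, and $\cobar C_{*}(X;\mathbf{k})$ is precisely the standard model for this derived cotensor product. Comparing the cobar word-length filtration on the source with the skeletal Eilenberg--Moore filtration of the path fibration on the target yields a map of spectral sequences that is an isomorphism on $E_1$; here simple connectivity of $X$ guarantees first-quadrant convergence, so $\theta_X$ is a quasi-isomorphism, giving the multiplicative isomorphism $H(\cobar C_{*}(X;\mathbf{k}))\cong H_{*}(\Omega X;\mathbf{k})$. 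A more hands-on alternative is induction over the cells of a CW-model of $X$: verify the statement for a point and for spheres, check that attaching a cell corresponds on both sides to a compatible algebraic extension, and conclude by a mapping-cone / five-lemma argument.

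The main obstacle is the construction and verification of the universal twisting cochain $\psi_X$: making the simplex-to-cube assignment explicit and checking the twisting equation requires controlling the interaction of the non-cocommutative Alexander--Whitney diagonal with the concatenation product, both of which are only strictly (co)associative on the normalized models. This combinatorial bookkeeping is the genuine heart of Adams' argument. By contrast, the passage from $\psi_X$ to $\theta_X$ via the adjunction is formal, and the quasi-isomorphism statement, once the Eilenberg--Moore comparison is in place, reduces to the standard convergence afforded by the hypothesis that $X$ is simply connected.
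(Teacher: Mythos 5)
The paper does not prove this statement: it is quoted verbatim as Adams' theorem with a citation to \cite{Adams}, so there is no in-paper proof to compare against. Judged on its own terms, your sketch correctly reproduces the architecture of the classical argument: the simplex-to-cube assignment defining the natural degree $-1$ map $\psi_X\colon \overline{C}_{*}(X;\mathbf{k})\to \cux$, the twisting-cochain identity, the adjunction yielding a dga morphism $\theta_X\colon\cobar C_{*}(X;\mathbf{k})\to\cux$, and a filtration comparison to establish the quasi-isomorphism. You are also right to flag the verification of the twisting equation for the Alexander--Whitney diagonal as the combinatorial heart of the matter, and to note that simple connectivity is what makes the spectral sequence converge (this is exactly where the hypothesis enters and why the ``suitably reduced'' complex, with the $1$-skeleton collapsed, is needed so that the cobar construction receives a coalgebra with $C_1=0$).

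Two small remarks. First, the product on normalised cubical chains is induced by plain juxtaposition of cube coordinates (the cubical Eilenberg--Zilber map is the identity-like concatenation $I^p\times I^q\cong I^{p+q}$), not by a shuffle map; avoiding shuffles is precisely why Adams works cubically on the loop-space side. Second, Adams' original convergence argument proceeds by induction over a CW filtration of $X$ with a five-lemma/comparison step, rather than through the Eilenberg--Moore derived cotensor product; your Eilenberg--Moore route is the standard modern repackaging and is equally valid, but it presupposes the identification of $\cobar C_{*}(X;\mathbf{k})$ with the standard cobar resolution computing $\mathrm{Cotor}$, which deserves a sentence of justification if written out in full.
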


Now we turn to  polyhedral products. 
Recall that $\underline{X}$ stands for the sequence 
of topological spaces
$(X_1, \ldots, X_m)$. Also,  by $\underline{X}^{\K}$ we denote 
the polyhedral product $\bigl( (X_1, pt), \ldots, (X_m, pt)\bigr)^{\K}$.

\begin{theorem}[{\cite[Th.~8.1.2]{tortop}}]
If each space $X_i$ in $\underline{X} = (X_1, \ldots, X_m)$ is formal, then the
polyhedral product $\underline{X}^{\K}$ is also formal.
\end{theorem}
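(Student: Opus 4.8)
The plan is to realize $\underline{X}^{\K}$ as a homotopy colimit and then transport formality through the commutative cochain functor. As a space one has $\underline{X}^{\K} = \colim_{I \in \K} \prod_{i \in I} X_i$, the colimit of the diagram $D_{\underline{X}}$ over the face category $\mathrm{cat}(\K)$ (objects the simplices $I \in \K$, with $\prod_{i\in\varnothing}X_i = pt$; morphisms the inclusions $I \subset J$) whose value on $I$ is the partial product $\prod_{i \in I} X_i$ and whose structure maps insert basepoints in the missing coordinates. Since the $X_i$ are based CW-complexes these maps are cofibrations and the diagram is cofibrant, so this colimit already computes the homotopy colimit. Working over a field $\mathbf{k}$ (or over $\Q$), I would apply the commutative cochain functor $A^{*}(-)$; being contravariant it sends homotopy colimits to homotopy limits, yielding a quasi-isomorphism $A^{*}(\underline{X}^{\K}) \simeq \operatorname{holim}_{I \in \K} A^{*}\bigl(\prod_{i \in I} X_i\bigr)$.

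The second step upgrades the formality of each factor to formality of the whole diagram. For each vertex $i$ fix a zig-zag of quasi-isomorphisms $A^{*}(X_i) \xleftarrow{\ \sim\ } M_i \xrightarrow{\ \sim\ } (H^{*}(X_i), 0)$. A finite product of formal spaces is formal, but the key point is that these zig-zags can be tensored: the Eilenberg--Zilber maps $\bigotimes_{i \in I} A^{*}(X_i) \xrightarrow{\ \sim\ } A^{*}\bigl(\prod_{i \in I} X_i\bigr)$ are natural with respect to the structure maps, which on cochains are the insertions of the unit into the new tensor factors. Hence $I \mapsto \bigotimes_{i \in I} M_i$ and $I \mapsto \bigl(\bigotimes_{i \in I} H^{*}(X_i), 0\bigr)$ are again $\mathrm{cat}(\K)$-diagrams, and the vertexwise zig-zags assemble into a zig-zag of diagrams that is a levelwise quasi-isomorphism. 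Since homotopy limits preserve levelwise quasi-isomorphisms of (fibrant) diagrams, I would obtain
\[
A^{*}(\underline{X}^{\K}) \simeq \operatorname{holim}_{I \in \K} \bigl(\textstyle\bigotimes_{i \in I} H^{*}(X_i), 0\bigr) =: \mathcal{R}_{\K}(\underline{X}),
\]
a dg algebra assembled purely from the cohomology rings of the factors.

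It remains to identify the combinatorial model $\mathcal{R}_{\K}(\underline{X})$ with $(H^{*}(\underline{X}^{\K}), 0)$, and this is where the main obstacle lies. One analyses the homotopy limit through the Bousfield--Kan spectral sequence $E_2^{s,t} = \lim^{s} \bigl(\bigotimes_{i \in I} H^{t}(X_i)\bigr) \Rightarrow H^{s+t}(\mathcal{R}_{\K})$. Because the hypothesis is $A_i = pt$, every structure map on cohomology is a split surjection (projection onto the tensor factors indexed by the smaller simplex), and these splittings are compatible across $\K$; the diagram is therefore flabby, so the higher derived limits vanish, $\lim^{s} = 0$ for $s > 0$. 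Thus $\mathcal{R}_{\K}(\underline{X})$ is quasi-isomorphic to the strict limit $\lim_{I \in \K} \bigotimes_{i \in I} H^{*}(X_i)$ with zero differential, which one computes to be the generalized face ring $\bigoplus_{I \in \K} \bigotimes_{i \in I} \widetilde{H}^{*}(X_i)$. Matching this graded ring, \emph{together with its multiplication}, to $H^{*}(\underline{X}^{\K})$ is exactly the content of the multiplicative Bahri--Bendersky--Cohen--Gitler stable splitting, and it is verifying this identification — equivalently, the collapse of the spectral sequence and the compatibility of its multiplicative structure — that is the technical heart of the argument. The resulting zig-zag $A^{*}(\underline{X}^{\K}) \simeq \mathcal{R}_{\K}(\underline{X}) = (H^{*}(\underline{X}^{\K}), 0)$ then exhibits $\underline{X}^{\K}$ as formal.
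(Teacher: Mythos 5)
The paper states this result only as a citation of \cite[Th.~8.1.2]{tortop} and gives no proof of its own, so the comparison must be with the argument in that reference; your proof follows essentially the same route --- realising $\underline{X}^{\K}$ as the homotopy colimit of the diagram $I\mapsto\prod_{i\in I}X_i$ over the face category, applying the commutative cochain functor to turn it into a homotopy limit, tensoring the formality zig-zags of the $X_i$ levelwise via the natural K\"unneth quasi-isomorphisms, and using the vanishing of higher derived limits of the cohomology diagram (which does double duty in identifying the answer with $(H^{*}(\underline{X}^{\K}),0)$). The one point to flag is that a commutative cochain functor exists only in characteristic $0$, so the argument as written establishes rational formality, which is what the cited theorem asserts; the $\Z$-formality of $\cpk$ mentioned immediately afterwards in the paper is a separate result of Notbohm--Ray and is not covered by this argument.
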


It follows that 
the Davis–Januszkiewicz space $\cpk$ and 
the polyhedral product of spheres
$\sk =
\bigl( (S^{n_1}, pt), \ldots,  (S^{n_k}, pt)
\bigr)^{\K}
$ are formal. As shown in \cite{NR}, the space
$\cpk$ is also $\Z$-formal. The same holds for $\sk$. 
%These results can be derived from the Adams--Hilton models constructed below. 

The \textit{face algebra} $\Z[\K]$
is  the quotient dg algebra with zero differential
$$
\Z[\K] = \Z[v_1, \ldots, v_m]/ \mathcal{I_K}, \,\,|v_i| = 2,
$$
where $\mathcal{I_K} = (v_I \mid I \notin \K)$
is the Stanley--Reisner ideal generated by those monomials $v_I = 
v_{i_1} \cdots v_{i_s}$ for
which $I = \{ i_1, \ldots, i_s\}$ is not a simplex in $\K$.

The
\textit{face coalgebra} $\Z\langle\K\rangle$ is the graded dual of the face algebra $\Z[\K]$. 
We consider multisets $\sigma$ of $m$ elements, 
$$
\sigma =\{\underbrace{1,\ldots,1}_{k_1},\underbrace{2,\ldots,2}_{k_2},
  \ldots,\underbrace{m,\ldots,m}_{k_m}\}
$$
such that the \textit{support} of $\sigma$
(i.e. the set $I_{\sigma} =
\{i \in [m] \mid k_i \neq 0\}
$) is a simplex in $\K$.
Then $\Z\langle\K\rangle$ has an additive basis consisting of the elements $c_{\sigma}$, each $c_{\sigma}$ is dual to
 the monomial
$v_1^{k_1}v_2^{k_2}\cdots v_m^{k_m}\in\Z[\K]$. The
coproduct is given by
$$
  \Delta c_{\sigma}=
  \sum_{(\tau, \tau'), \,\,\sigma=\tau\,\sqcup\,\tau'}c_{\tau}\otimes c_{\tau'},
$$
where the sum ranges over all ordered partitions of $\sigma$ into
submultisets $\tau$ and $\tau'$.

\begin{proposition} We have
\begin{itemize}
\item[(a)]
the cohomology algebra $H^{*}(\cpk; \Z)$ is isomorphic to
$\Z[\K]$,
\item[(b)]
the homology coalgebra $H_{*}(\cpk; \Z)$ is isomorphic to
$\Z\langle\K\rangle$.
\end{itemize}
\end{proposition}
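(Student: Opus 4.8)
The plan is to exhibit $\cpk$ as a CW-subcomplex of $(\cp)^m$ whose cells all lie in even dimensions, to read off part (a) from the cellular cochain algebra, and to deduce part (b) by graded dualization.

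First I would fix the standard CW-structure on $\cp=\mathbb{C}P^\infty$ with a single cell in each even dimension $2k$. By the K\"unneth theorem the product $(\cp)^m$ then carries a CW-structure whose cells are indexed by multisets $\sigma=\{1^{k_1},\ldots,m^{k_m}\}$, the cell $c_\sigma$ having dimension $2(k_1+\cdots+k_m)$. By the definition of the polyhedral product, $\cpk=\bigcup_{I\in\K}(\cp)^I$ is precisely the CW-subcomplex of $(\cp)^m$ consisting of those cells $c_\sigma$ whose support $I_\sigma=\{i\mid k_i\neq 0\}$ is a simplex of $\K$. Since every cell is even-dimensional, the cellular chain and cochain complexes of both $(\cp)^m$ and $\cpk$ have zero differential; hence $H_*(\cpk)$ and $H^*(\cpk)$ are free abelian groups with bases given by the cells $c_\sigma$ and their duals, and in particular $\cpk$ is of finite type.

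For part (a), recall that the cellular cochain algebra of $(\cp)^m$, with the cup product, is the polynomial algebra $\Z[v_1,\ldots,v_m]$ with $|v_i|=2$; this is the classical computation for a single $\cp$ together with K\"unneth. Because $\cpk$ is a subcomplex, the restriction of cochains $j^\ast\colon C^\ast((\cp)^m)\to C^\ast(\cpk)$ induced by the inclusion $j\colon\cpk\hookrightarrow(\cp)^m$ is a surjective ring homomorphism whose kernel is the $\Z$-span of the cochains dual to the cells of $(\cp)^m$ not lying in $\cpk$. By the previous paragraph these are exactly the monomials $v_1^{k_1}\cdots v_m^{k_m}$ whose support $I_\sigma$ is not a simplex of $\K$. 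Since $\K$ is closed under passing to subsets, a monomial has support outside $\K$ if and only if it is divisible by some squarefree non-face monomial $v_I$, $I\notin\K$; hence this span coincides with the Stanley--Reisner ideal $\mathcal{I}_{\K}$. Passing to cohomology (where the differential vanishes) yields the isomorphism of graded algebras $H^\ast(\cpk;\Z)\cong\Z[v_1,\ldots,v_m]/\mathcal{I}_{\K}=\Z[\K]$.

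For part (b), I would dualize. As $\cpk$ is of finite type and torsion-free, the universal coefficient theorem identifies $H_\ast(\cpk;\Z)$ with the graded dual $\Hom(H^\ast(\cpk;\Z),\Z)$, and under this identification the comultiplication induced by the diagonal $\cpk\to\cpk\times\cpk$ is dual to the cup product. It then suffices to observe that the graded dual of $\Z[\K]$ is by definition the face coalgebra: writing $c_\sigma$ for the basis element dual to $v_1^{k_1}\cdots v_m^{k_m}$, the product rule $v^\tau\cdot v^{\tau'}=v^{\tau\sqcup\tau'}$ (interpreted as $0$ when the support of $\tau\sqcup\tau'$ is not a simplex) dualizes to $\Delta c_\sigma=\sum_{\sigma=\tau\sqcup\tau'}c_\tau\otimes c_{\tau'}$, with no sign ambiguities since all classes sit in even degrees. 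This is exactly $\Z\langle\K\rangle$, proving $H_\ast(\cpk;\Z)\cong\Z\langle\K\rangle$. The main obstacle I anticipate is part (a): one must pin down the multiplicative structure, not merely the additive one, so the delicate points are that the cup product on the cellular cochains of $(\cp)^m$ really is polynomial multiplication and that restriction to $\cpk$ is a ring map with the asserted kernel. Once the cup product is under control, part (b) is a formal dualization, made especially clean by the absence of odd-degree classes.
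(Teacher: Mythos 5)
Your argument is correct. The paper itself gives no proof of this proposition --- it is quoted as a standard fact of toric topology (it is essentially \cite[Prop.~4.3.1 and \S8.4]{tortop}), and the argument given there is exactly the cellular one you describe: identify $\cpk$ with the even-dimensional CW-subcomplex of $(\cp)^m$ whose cells are the multisets with support in $\K$, read off $H^*$ from the vanishing of all cellular differentials, and dualize. The only point worth polishing is your phrase ``the cellular cochain algebra \ldots with the cup product'': cellular cochains do not carry a natural cup product in general, but since the differential vanishes here the cochains \emph{are} the cohomology, so the statement is harmless once rephrased as a claim about $H^*((\cp)^m)\cong\Z[v_1,\ldots,v_m]$ via K\"unneth. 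Everything else --- the identification of the kernel of $j^*$ with the $\Z$-span of monomials of non-face support, the observation that this span equals $\mathcal{I}_{\K}$ because $\K$ is closed under subsets, and the sign-free dualization in part (b) --- is sound.
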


Consider the cobar construction $\cobar \Z \langle \K \rangle$.
By the definition above, we obtain
\begin{equation} \label{cobar_cpk}
\cobar \Z \langle \K \rangle = 
T( U ), \,\, U =\,\langle\chi_{\sigma} \mid 
I_{\sigma} \in \K, \,\sigma \neq \varnothing \rangle, \,\,
 |\chi_{\sigma}| = 2|\sigma| -1.
\end{equation}
Since the differential on $\Z\langle\K\rangle$ is zero, we get
\begin{equation} \label{diff_cpk}
\partial \chi_{\sigma} = \sum_{(\tau, \tau'), \,\,\sigma=\tau\,\sqcup\,\tau'}\chi_{\tau}\otimes \chi_{\tau'}.
\end{equation}
For example,
$$
\partial \chi_{i i} = \chi_i \otimes \chi_i, \quad
\partial \chi_{ i j} = \chi_i \otimes \chi_j + \chi_j \otimes \chi_i.
$$

\begin{proposition}[{\cite[Prop.~8.4.10]{tortop}}]
\label{1}
There is an isomorphism of graded algebras
$$
H_{*}( \Omega \cpk; \Z) \cong H(\cobar \Z 
\langle \K \rangle).
$$
\end{proposition}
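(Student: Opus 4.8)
The plan is to combine Adams' theorem with the $\Z$-formality of $\cpk$ and the homotopy invariance of the cobar functor. First I would apply the cited theorem of Adams: since $\cpk$ is simply connected (it is assembled from copies of the simply connected space $\cp=\mathbb{C}P^\infty$), there is a natural isomorphism of graded algebras
$$
H_*(\Omega\cpk;\Z)\;\cong\;H\bigl(\cobar C_*(\cpk;\Z)\bigr),
$$
where $C_*(\cpk;\Z)$ denotes the suitably reduced singular chain coalgebra. It therefore suffices to produce an algebra isomorphism $H(\cobar C_*(\cpk;\Z))\cong H(\cobar\Z\langle\K\rangle)$, and this I would obtain by replacing the chain coalgebra with its homology.

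Next I would invoke formality. By the quoted formality theorem (Th.~8.1.2 of \cite{tortop}) together with the integral refinement of \cite{NR}, the singular chain coalgebra $C_*(\cpk;\Z)$ is connected to its homology coalgebra $H_*(\cpk;\Z)$, equipped with the zero differential, by a zig-zag of quasi-isomorphisms inside the category $DGC_{1,\Z}$ of $1$-connected dg coalgebras. The target is genuinely $1$-connected: the basis elements $c_\sigma$ of $\Z\langle\K\rangle$ lie in degrees $2|\sigma|\ge 2$, so $C_0=\Z$ and $C_1=0$. By part~(b) of the preceding proposition this homology coalgebra is isomorphic to the face coalgebra $\Z\langle\K\rangle$, whence $\cobar H_*(\cpk;\Z)\cong\cobar\Z\langle\K\rangle$.

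The central step is the homotopy invariance of the cobar functor on $DGC_{1,\Z}$: a quasi-isomorphism of $1$-connected dg coalgebras should induce a quasi-isomorphism of the associated cobar dg algebras. I would prove this by filtering $\cobar C=(T(s^{-1}\overline C),\partial)$ by word length in the tensor algebra. The $1$-connectivity makes this filtration degreewise finite, so the resulting spectral sequence converges; on the associated graded the coproduct part of $\partial$ drops out, leaving only the internal differential of $C$. Comparing the two spectral sequences then reduces the claim to showing that a quasi-isomorphism $f\colon C\to C'$ induces a quasi-isomorphism $f^{\otimes n}\colon (s^{-1}\overline C)^{\otimes n}\to(s^{-1}\overline{C'})^{\otimes n}$ on every tensor power. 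Feeding the zig-zag of the previous paragraph through this machine and composing with Adams' isomorphism yields the desired algebra isomorphism $H_*(\Omega\cpk;\Z)\cong H(\cobar\Z\langle\K\rangle)$.

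The hardest point is exactly this last reduction over the integers. The Künneth theorem over $\Z$ carries $\mathrm{Tor}$ correction terms, so $f^{\otimes n}$ need not be a quasi-isomorphism for an arbitrary quasi-isomorphism $f$, and this is where a naive transfer of the field-coefficient argument would break down. The saving feature is that all the coalgebras in play are degreewise free $\Z$-modules: the singular chains are free, and $\Z\langle\K\rangle$ is free on the basis $\{c_\sigma\}$. Hence the $\mathrm{Tor}$ terms vanish, the Künneth map is an isomorphism, and $f^{\otimes n}$ is a quasi-isomorphism as needed. This freeness is precisely what upgrades the formality argument from a statement over a field to one valid integrally, and it is the ingredient I would be most careful to verify.
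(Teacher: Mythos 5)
The paper gives no proof of this proposition beyond the citation to \cite[Prop.~8.4.10]{tortop}, and your argument correctly reconstructs the standard proof of that cited result: Adams' theorem, integral formality of $\cpk$ (via \cite{NR}) transported to the chain coalgebra, and homotopy invariance of $\cobar$ on $DGC_{1,\Z}$ proved by the word-length filtration, with the degreewise freeness of all the $\Z$-modules involved disposing of the $\mathrm{Tor}$ terms in the K\"unneth comparison. This is sound and matches the intended argument; the only point to watch is that the zig-zag of \cite{NR} is stated for cochain algebras and must be dualized to coalgebras, which the freeness you invoke also justifies.
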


%The proposition above is proved by using the integral formality of $\cpk$. We want to obtain the similar statement
%for the polyhedral product of spheres.
%Namely, we aim to prove that the homology  of $\Omega_{*} H_{*} (\sk; \Z)$
%is isomorphic to the Pontryagin algebra. 
%But in the case of $\sk$ we will obtain that the cobar construction coincides with
%an Adams--Hilton model $\mathbf{AH}(X)$ as dga. 
%By definition, the homology algebra of any Adams--Hilton model is isomorphic to the Pontryagin algebra.

We want to obtain a similar description for the cobar construction and the Pontryagin algebra for the polyhedral product of spheres $\sk$. We  start by  describing  explicitly  the cobar
construction $\cobar H_{*} (\sk; \Z)$.

\begin{theorem} [{\cite{BBCG}}]
Let $\underline{X} = (X_1, \ldots, X_m)$ be a sequence of pointed cell
complexes, and let $\mathbf{k}$ be a ring such that the natural map
$$
H^{*}(X_{j_1}; \mathbf{k}) \otimes \cdots \otimes H^{*}(X_{j_k}; \mathbf{k}) 
\rightarrow H^{*}(X_{j_1} \times \cdots \times X_{j_k}; \mathbf{k})
$$
is an isomorphism for any $\{ j_1, \ldots, j_k\} \subset [m]$. Then there is an isomorphism of algebras
$$
H^{*}(\underline{X}^{\K}; \mathbf{k}) \simeq
(H^{*}(X_{1}; \mathbf{k}) \otimes \cdots \otimes H^{*}(X_{m}; \mathbf{k}) ) / 
\mathcal{I},
$$
where $\mathcal{I}$ is the generalised Stanley–Reisner ideal, generated by elements 
$x_{j_1} \otimes \cdots \otimes x_{j_k}$, $x_{j_i} \in 
\widetilde{H}^{*}(X_{j_i}; \mathbf{k})$ and $\{ j_1, \ldots, j_k\}
\notin \K$.
Furthermore, the inclusion 
$\underline{X}^{\K} \hookrightarrow X_1 \times \cdots \times X_m$
induces the quotient projection in cohomology.
\end{theorem}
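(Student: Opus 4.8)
The plan is to reduce the statement to two independent assertions: an additive computation of $H^*(\underline{X}^{\K};\mathbf k)$, and the identification of the ring map induced by the inclusion. Throughout write $\iota\colon \underline{X}^{\K}\hookrightarrow X_1\times\cdots\times X_m$ for the canonical inclusion, noting that $X_1\times\cdots\times X_m=\underline{X}^{\Delta^{m-1}}$ is itself the polyhedral product over the full simplex. By the Künneth hypothesis the cross-product is a ring isomorphism $H^*(X_1\times\cdots\times X_m;\mathbf k)\cong H^*(X_1;\mathbf k)\otimes\cdots\otimes H^*(X_m;\mathbf k)$, so under this identification $\iota^*$ is a ring homomorphism $\iota^*\colon \bigotimes_{i} H^*(X_i;\mathbf k)\to H^*(\underline{X}^{\K};\mathbf k)$. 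The whole theorem then amounts to showing that $\iota^*$ is surjective with kernel exactly $\mathcal I$; since $\iota^*$ is multiplicative, the first isomorphism theorem will then yield both the ring isomorphism and the ``furthermore'' clause simultaneously.

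For the additive step I would invoke the BBCG stable decomposition of the polyhedral product, which in the case $A_i=pt$ reads
$$
\Sigma\,\underline{X}^{\K}\;\simeq\;\bigvee_{\substack{I\in\K\\ I\neq\varnothing}}\Sigma\,\widehat{X}^{I},\qquad \widehat{X}^{I}=\bigwedge_{i\in I}X_i .
$$
Applying reduced cohomology and the Künneth isomorphism for smash products turns this into
$$
\widetilde{H}^*(\underline{X}^{\K};\mathbf k)\;\cong\;\bigoplus_{\substack{I\in\K\\ I\neq\varnothing}}\ \bigotimes_{i\in I}\widetilde{H}^*(X_i;\mathbf k).
$$
On the algebraic side, $\bigotimes_i H^*(X_i;\mathbf k)$ has a monomial basis indexed by a support $S\subseteq[m]$ together with a choice of positive-degree basis element in each coordinate $i\in S$; and one checks that $\mathcal I$ is precisely the $\mathbf k$-span of those monomials whose support is a non-face, since every monomial supported on a non-face factors through a generator $x_{j_1}\otimes\cdots\otimes x_{j_k}$ with $\{j_1,\dots,j_k\}\notin\K$. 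Hence the quotient $\bigl(\bigotimes_i H^*(X_i)\bigr)/\mathcal I$ has a basis of monomials supported on the faces of $\K$, and in positive degrees this basis matches the decomposition above term by term.

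To finish, I would show these two additive descriptions are identified by $\iota^*$. The splitting is natural for inclusions of simplicial complexes, and the inclusion $\K\subseteq\Delta^{m-1}$ identifies $\underline{X}^{\Delta^{m-1}}$ with the full product, whose splitting is $\bigvee_{\varnothing\neq I\subseteq[m]}\Sigma\widehat X^I$. Consequently $\iota^*$ is, under the two decompositions, the projection $\bigoplus_{\varnothing\neq I\subseteq[m]}\bigotimes_{i\in I}\widetilde H^*(X_i)\twoheadrightarrow\bigoplus_{\varnothing\neq I\in\K}\bigotimes_{i\in I}\widetilde H^*(X_i)$ onto the face-indexed summands. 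Concretely, the component of $\iota^*\bigl(x_{j_1}\otimes\cdots\otimes x_{j_k}\bigr)$ in the summand indexed by $I\in\K$ is its restriction to $\widehat X^I$, which vanishes exactly when $\{j_1,\dots,j_k\}\not\subseteq I$; since $\{j_1,\dots,j_k\}\notin\K$ is contained in no face, such a generator maps to $0$, giving $\mathcal I\subseteq\ker\iota^*$. The same projection description shows $\iota^*$ carries each face-supported monomial isomorphically onto its summand, so $\iota^*$ is surjective and $\ker\iota^*=\mathcal I$ exactly, which completes the proof.

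The main obstacle is the naturality input in the last paragraph: everything hinges on the BBCG stable splitting being compatible with the inclusion $\underline{X}^{\K}\hookrightarrow\underline{X}^{\Delta^{m-1}}=\prod_i X_i$, so that restriction to the face pieces $\widehat X^{I}$ detects cohomology classes and pins the kernel down to be exactly $\mathcal I$ rather than something larger. If one wishes to avoid quoting the splitting, the alternative is an induction over the faces of $\K$: adjoin one maximal simplex at a time and analyse the resulting Mayer--Vietoris sequences, where the delicate points become the vanishing of the connecting homomorphisms and the compatibility of the cup product across the pushout. I expect the splitting-based route to be cleaner, with the only real work being the bookkeeping that matches the wedge summands to the monomial basis of the quotient ring.
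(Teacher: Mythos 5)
This theorem is imported from \cite{BBCG} and the paper gives no proof of its own, so there is nothing internal to compare against; your argument follows the same route as the original source, namely the BBCG stable splitting $\Sigma\,\underline{X}^{\K}\simeq\bigvee_{\varnothing\neq I\in\K}\Sigma\widehat{X}^{I}$ together with its naturality in $\K$, and it is correct in outline. Two small points. First, your bookkeeping of where a monomial lands is slightly off: under the splitting of the full product $\underline{X}^{\Delta^{m-1}}$, a monomial with support $S$ corresponds to the single summand $I=S$ (classes pulled back from $\widehat{X}^{I}$ have support exactly $I$), not to all $I\supseteq S$ as your phrase ``vanishes exactly when $\{j_1,\dots,j_k\}\not\subseteq I$'' suggests; this does not damage the argument, since a non-face $S$ equals no face $I$ (indeed is contained in none, as $\K$ is closed under subsets), so generators of $\mathcal I$ still die, and a face-supported monomial still maps isomorphically to its own summand. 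Second, your reduction of the multiplicative statement to the additive one is the right move: since $\iota^*$ is a ring map, surjectivity plus $\ker\iota^*=\mathcal I$ yields both the algebra isomorphism and the ``furthermore'' clause at once, so no separate verification of products is needed. The only genuine input you are quoting rather than proving is the naturality of the splitting with respect to $\K\subseteq\Delta^{m-1}$, which is exactly the content of \cite{BBCG}; as a blind reconstruction of a cited external result, that is an acceptable place to anchor the argument.
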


\begin{corollary}
Let $\sk =
\bigl( (S^{n_1}, pt), \ldots,  (S^{n_k}, pt)
\bigr)^{\K}$ be a polyhedral product of spheres. Then the cohomology algebra is given by
$$
H^{*}(\sk; \mathbb{Z}) = 
\bigl(\mathbb{Z}[a_1] / (a^2_1) \otimes \ldots \otimes \mathbb{Z}[a_m] / (a^2_m) \bigr) \big/ \bigl( a_{j_1} \otimes \cdots \otimes a_{j_s} \,\,\,
\text{if } \,\, \{ j_1, \ldots, j_s\} 
\notin \K \bigr),
$$
where $|a_i| = n_i$.
\end{corollary}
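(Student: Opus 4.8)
The plan is to derive this directly from the preceding theorem of \cite{BBCG}, specialising $\underline{X} = (X_1, \ldots, X_m)$ to $X_i = S^{n_i}$. Only one hypothesis of that theorem needs checking, after which the statement follows by substitution together with an explicit identification of the generalised Stanley--Reisner ideal. First I would recall the standard computation $H^{*}(S^{n_i}; \Z) \cong \Z[a_i]/(a_i^2)$ with $|a_i| = n_i$: integrally the cohomology of $S^{n_i}$ is a copy of $\Z$ in degrees $0$ and $n_i$ and zero elsewhere, so as a graded ring it is the truncated polynomial algebra on one generator $a_i$ in degree $n_i$, with $a_i^2 = 0$ for dimension reasons.

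The key step is verifying that the natural (cross-product) map
$$
H^{*}(S^{n_{j_1}}; \Z) \otimes \cdots \otimes H^{*}(S^{n_{j_k}}; \Z)
\;\longrightarrow\;
H^{*}(S^{n_{j_1}} \times \cdots \times S^{n_{j_k}}; \Z)
$$
is an isomorphism for every subset $\{j_1, \ldots, j_k\} \subset [m]$. Since each $H^{*}(S^{n_i}; \Z)$ is a finitely generated \emph{free} $\Z$-module (being $\Z$ in each of the two nonzero degrees), the integral K\"unneth theorem applies with vanishing $\mathrm{Tor}$ term, and the cross-product map is an isomorphism by induction on $k$. This is the one point where the integral coefficient ring could in principle cause trouble, and it is precisely the freeness of sphere cohomology that rescues us; this is the main (and essentially the only nontrivial) obstacle in the argument.

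With the hypothesis confirmed, the theorem yields an isomorphism of algebras
$$
H^{*}(\sk; \Z) \;\cong\; \bigl( \Z[a_1]/(a_1^2) \otimes \cdots \otimes \Z[a_m]/(a_m^2) \bigr) \big/ \mathcal{I},
$$
where $\mathcal{I}$ is generated by the elements $x_{j_1} \otimes \cdots \otimes x_{j_s}$ with $x_{j_i} \in \widetilde{H}^{*}(S^{n_{j_i}}; \Z)$ and $\{j_1, \ldots, j_s\} \notin \K$. The final step is to make $\mathcal{I}$ explicit. The reduced cohomology $\widetilde{H}^{*}(S^{n_i}; \Z)$ is the free $\Z$-module on the single generator $a_i$, so any such generator of $\mathcal{I}$ is a scalar multiple of $a_{j_1} \otimes \cdots \otimes a_{j_s}$; hence $\mathcal{I}$ is generated by the monomials $a_{j_1} \otimes \cdots \otimes a_{j_s}$ for which $\{j_1, \ldots, j_s\} \notin \K$. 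Substituting this description of $\mathcal{I}$ into the displayed isomorphism gives exactly the claimed presentation of $H^{*}(\sk; \Z)$, completing the proof.
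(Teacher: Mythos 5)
Your proposal is correct and follows exactly the route the paper intends: the corollary is an immediate specialisation of the preceding theorem of Bahri--Bendersky--Cohen--Gitler to $X_i = S^{n_i}$, with the K\"unneth hypothesis satisfied because $H^{*}(S^{n_i};\Z)$ is a free $\Z$-module, and the generalised Stanley--Reisner ideal reducing to the stated monomial ideal since $\widetilde{H}^{*}(S^{n_i};\Z)$ is generated by the single class $a_i$. The paper gives no separate proof for this corollary, so your verification of the hypothesis and identification of the ideal is precisely the omitted argument.
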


Given $J = \{j_1, \ldots, j_s\} \subset [m]$, $j_1 < \cdots < j_s$, we denote by $\alpha_J$ the element of $H_{*}(\sk; \Z)$ dual to
$a_{j_1} \otimes \cdots \otimes a_{j_s}\in H^*(\sk; \mathbb{Z}) $.

%We use the notation $\alpha_J, \, J = \{j_1, %\ldots, j_s\} \subset [m], \,
%j_1 < \cdots < j_s,$ for the 
%characteristic function in $Hom( H^{*}(\sk; \Z), %\Z)$ that 
%corresponds to the element 
%$a_{j_1} \otimes \cdots \otimes a_{j_s}$.

\begin{proposition}
Let $\sk =
\bigl( (S^{n_1}, pt), \ldots,  (S^{n_k}, pt)
\bigr)^{\K}$ be a polyhedral product of spheres.
Then the homology coalgebra is 
$$
H_{*}(\sk; \Z) = 
\langle \alpha_J,  \,
J 
\in \K \rangle,\quad J = \{j_1, \ldots, j_s\}, \quad |\alpha_J| = n_{j_1} + \cdots + n_{j_s}.
$$
The coproduct is given by
\begin{multline} \label{coprod1}
\Delta \alpha_{j_1 \cdots j_s} = 
\sum_{p=1}^{s-1} \sum_{\theta \in \widetilde{S}(p, s-p)} \varepsilon(\theta)
\bigl(
\alpha_{j_{\theta(1)} \cdots j_{\theta(p)}} \otimes \alpha_{j_{\theta(p+1)} \cdots j_{\theta(s)}}
+ 
\\
+
(-1)^{|\alpha_{...}|
|\alpha_{...}|}
\alpha_{j_{\theta(p+1)} \cdots j_{\theta(s)}}
\otimes \alpha_{j_{\theta(1)} \cdots j_{\theta(p)}}
\bigr),
\end{multline}
where $\widetilde{S}(p, s-p)$ denotes the set of shuffle permutations such that $\theta(1)= 1$, and
$\varepsilon(\theta)$ is the Koszul
sign of the elements $a_{j_1}, \ldots, a_{j_s}$. 
Equivalently,
\begin{equation} \label{coprod2}
\Delta \alpha_J 
=\sum\limits_{(I, L), \,I \sqcup L = J, \,i_1 = j_1} \varepsilon(I, L) 
\bigl(
\alpha_{I} \otimes \alpha_{L}
+
(-1)^{|a_{L}||a_{I}|}  \alpha_{L} \otimes \alpha_{I}
\bigr)
\end{equation}
\begin{equation}  \label{coprod3}
=\sum\limits_{(I, L), \,I \sqcup L = J} \varepsilon(I, L) 
\alpha_{I} \otimes \alpha_{L}.
\end{equation}
Here $I = \{i_1, \ldots, i_p\}$ and $L= \{l_1, \ldots, l_{s-p}\}$ are subsets of~$[m]$, 
$i_1 < \cdots < i_p$, $l_1 < \cdots < l_{s-p}$, and $\varepsilon(I, L)$ is the Koszul sign:
$$
a_{j_1} \otimes \cdots \otimes a_{j_s} =
\varepsilon(I, L)
a_{i_1} \otimes \cdots a_{i_p} \otimes
a_{l_1} \otimes \cdots a_{l_{s-p}}.
$$
\end{proposition}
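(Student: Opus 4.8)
The plan is to deduce everything by dualizing the cohomology algebra computed in the preceding Corollary. That Corollary gives $H^{*}(\sk;\Z)$ an additive basis $\{a_J : J \in \K\}$, where for $J = \{j_1 < \cdots < j_s\}$ we abbreviate $a_J = a_{j_1}\otimes\cdots\otimes a_{j_s}$ and $|a_J| = n_{j_1}+\cdots+n_{j_s}$. Since $H_{*}(\sk;\Z)$ is free of finite rank in each degree, it is the graded dual of $H^{*}(\sk;\Z)$, so the dual basis $\{\alpha_J\}$ gives precisely the stated module description with $|\alpha_J| = \sum_{j\in J} n_j$. The coalgebra structure on $H_{*}$ is then, by definition, the graded dual of the algebra structure on $H^{*}$, so the whole content of the Proposition is an explicit computation of the cohomology product followed by its transpose.

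First I would record the multiplication in $H^{*}(\sk;\Z)$ explicitly. Because each $a_i^2 = 0$ and the tensor factors graded-commute, the product $a_I \cdot a_L$ vanishes unless $I \cap L = \varnothing$; when $I$ and $L$ are disjoint, reordering the concatenation $a_{i_1}\otimes\cdots\otimes a_{i_p}\otimes a_{l_1}\otimes\cdots\otimes a_{l_{s-p}}$ into increasing index order produces exactly the Koszul sign $\varepsilon(I,L)$. Hence $a_I \cdot a_L = \varepsilon(I,L)\,a_{I\sqcup L}$ if $I\sqcup L \in \K$, and $a_I \cdot a_L = 0$ if $I\sqcup L \notin \K$, the latter by the generalized Stanley--Reisner relations.

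Next I would dualize. For $J \in \K$ the coproduct satisfies $\langle \Delta\alpha_J,\, a_I \otimes a_L\rangle = \langle \alpha_J,\, a_I \cdot a_L\rangle$, which is nonzero precisely when $a_I\cdot a_L = \pm a_J$, that is, when $I$ and $L$ are disjoint with $I\sqcup L = J$; in that case the coefficient is $\varepsilon(I,L)$. This yields formula \eqref{coprod3} at once, $\Delta\alpha_J = \sum_{I\sqcup L = J}\varepsilon(I,L)\,\alpha_I\otimes\alpha_L$, the sum ranging over all ordered disjoint decompositions of $J$ (with $I$ or $L$ allowed to be empty only in the counit terms, which I would discard when passing to the reduced coproduct).

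Finally I would rewrite \eqref{coprod3} in the two remaining forms, which is pure bookkeeping with Koszul signs. Splitting the sum according to whether $j_1\in I$ or $j_1\in L$ and applying the identity $\varepsilon(L,I) = (-1)^{|a_I||a_L|}\varepsilon(I,L)$ pairs the two summands and produces \eqref{coprod2}. To reach the shuffle description \eqref{coprod1}, I would encode each decomposition $(I,L)$ with $i_1 = j_1$ by the unique $(p,s-p)$-shuffle $\theta \in \widetilde{S}(p,s-p)$ with $\theta(1)=1$ sending the first $p$ slots to $I$ and the last $s-p$ to $L$, and verify that the Koszul sign of $a_{j_1},\ldots,a_{j_s}$ under $\theta$ equals $\varepsilon(I,L)$. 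The one step demanding genuine care is this final sign-and-reindexing match: one must check that the bijection between ordered disjoint decompositions and shuffles is sign-compatible and that the normalization $\theta(1)=1$ (equivalently $i_1=j_1$) correctly selects one representative from each pair $\{(I,L),(L,I)\}$. Everything else is formal graded duality.
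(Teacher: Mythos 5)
Your proposal is correct and follows essentially the same route as the paper: identify $H_{*}(\sk;\Z)$ with the graded dual of the free module $H^{*}(\sk;\Z)$, compute $a_I\cdot a_L=\varepsilon(I,L)\,a_{I\sqcup L}$, transpose to get \eqref{coprod3}, and pair the terms via $\varepsilon(L,I)=(-1)^{|a_I||a_L|}\varepsilon(I,L)$ to obtain \eqref{coprod2} and then \eqref{coprod1}. The paper compresses the final shuffle reindexing into ``follows easily,'' whereas you spell it out; otherwise the arguments coincide.
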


\begin{proof}
Since
$H^{*}(\sk; \Z)$ is free, we obtain an isomorphism of modules
$H_{*}(\sk; \Z) \cong
\Hom (H^{*}(\sk); \Z), 
$
which is also an isomorphism of coalgebras.
It remains to prove the coproduct formulae.

We denote the algebra $H^{*}(\sk; \Z)$ by $A$ and the coalgebra   $H_{*}(\sk; \mathbb{Z})$ by   $A^{*}$.
Let $\mu\colon A \otimes A \rightarrow A$ be the product in $A$.
The algebra $A$ is finite-dimensional, thus 
$ (A \otimes A )^{*} \cong A^{*} \otimes A^{*}$.
The coproduct $\Delta\colon A^{*} \rightarrow A^{*} \otimes A^{*}$
is defined by
$\Delta\alpha_J = \alpha_J \circ \mu$. 
%Since the function $\alpha_J$ is characteristic, 
%we substitute elements of the form $a_{I} \otimes %a_{L}$, $I \sqcup L = J$, into the product $\mu$.
We have
$$
\mu(a_{I} \otimes a_{L}) = \varepsilon(I, L) a_J.
$$
%where $\varepsilon(I, L)$ is the Koszul sign of %the elements $a_i$:
%$$
%a_{j_1} \otimes \cdots \otimes a_{j_s} =
%\varepsilon(I, L)
%a_{i_1} \otimes \cdots a_{i_p} \otimes
%a_{l_1} \otimes \cdots a_{l_{s-p}}.
%$$
It follows that $\langle a_{I} \otimes a_{L}
\Delta\alpha_J\rangle =\varepsilon(I, L)$.

We use the sign convention $$\langle a_I\otimes a_L,\alpha_{I} \otimes \alpha_{L}\rangle=1.$$
Decompose  $\Delta \alpha_J$ 
into a sum of basis elements in $A^{*} \otimes A^{*}$:
$$
\Delta \alpha_J = \sum_{(I, L), \,i_1 = j_1}
\varepsilon(I, L)  \alpha_{I} \otimes \alpha_{L}
+
\varepsilon(L, I)  \alpha_{L} \otimes \alpha_{I}
$$
$$
=\sum_{(I, L), \,i_1 = j_1} \varepsilon(I, L) 
\Bigl(
\alpha_{I} \otimes \alpha_{L}
+
(-1)^{|a_{L}||a_{I}|}  \alpha_{L} \otimes \alpha_{I}
\Bigr) 
$$
$$
=\sum\limits_{(I, L), \,I \sqcup L = J} \varepsilon(I, L) 
\alpha_{I} \otimes \alpha_{L}.
$$
This gives \eqref{coprod2} and \eqref{coprod3}, and \eqref{coprod1} follows easily.
\end{proof}

\begin{proposition} \label{prop_cobar_sph}
Let $\sk =
\bigl( (S^{n_1}, pt), \ldots,  (S^{n_k}, pt)
\bigr)^{\K}$ be a polyhedral product of spheres, $n_i \geq 2$.
Then
the cobar construction $\cobar H_{*}(\sk; \Z)$
is isomorphic to the  dg algebra $T(U)$, where
$$
U = \langle b_J, \, J \in \K, \,\,
J \neq \varnothing \rangle, \,\, 
J = \{j_1, \ldots, j_s\}, \,\,
|b_J| = n_{j_1} + \cdots + n_{j_s} - 1,
$$
the differential is given by
\begin{equation} \label{cobardif1}
\partial b_J = 
\sum\limits_{(I,L), I\sqcup L = J}
\varepsilon(I, L) (-1)^{|b_I|+1} b_I \otimes b_L,
\end{equation}
where $\varepsilon(I, L)$ is the Koszul sign of the suspended elements $s (b_i), |s (b_i)| = n_i$:
$$
s b_{j_1} \otimes \cdots \otimes s b_{j_s} =
\varepsilon(I, L)
s b_{i_1} \otimes \cdots s b_{i_p} \otimes
s b_{l_1} \otimes \cdots s b_{l_{s-p}}.
$$
Equivalently, 
\begin{equation} \label{cobardif2}
\partial(b_{j_1 \cdots j_s}) = 
\sum_{p=1}^{s-1} \sum_{\theta \in \widetilde{S}(p, s-p)} \varepsilon(\theta)
(-1)^{|b_{j_{\theta(1)} \cdots j_{\theta(p)}}|+1 }
\bigl[b_{j_{\theta(1)} \cdots j_{\theta(p)}}, b_{j_{\theta(p+1)} \cdots j_{\theta(s)}}\bigr],
\end{equation}
where 
$\varepsilon(\theta)$ is the Koszul
sign of the elements
$s b_{j_1}, \ldots, s b_{j_s}$, and
$\widetilde{S}(p, s-p)$ denotes the set of shuffle permutations such that $\theta(1)= 1$. 
\end{proposition}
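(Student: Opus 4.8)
The approach is to read off the cobar differential directly from \eqref{dcobar}, using the coproduct of $H_{*}(\sk;\Z)$ computed in the preceding proposition. By definition $\cobar H_{*}(\sk;\Z) = (T(s^{-1}\overline{H_{*}(\sk)}),\partial)$ with $\overline{H_{*}(\sk)} = \coker(\Z\hookrightarrow H_{*}(\sk))$, which is free on the classes $\alpha_J$, $J\in\K$, $J\neq\varnothing$. Putting $b_J := s^{-1}\alpha_J$ identifies the underlying free algebra with $T(U)$ tautologically, and desuspension gives $|b_J| = |\alpha_J| - 1 = n_{j_1}+\cdots+n_{j_s}-1$, which is the stated degree. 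Because $\K$ is a simplicial complex, every $I\subset J\in\K$ again lies in $\K$, so all generators appearing in $\partial b_J$ are genuine generators of $U$; in particular $T(U)$ is closed under $\partial$, and the whole content of the proposition is the explicit form of the differential.

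First I would note that the internal differential of $H_{*}(\sk;\Z)$ is zero, so the term $-s^{-1}\partial_C c$ in \eqref{dcobar} drops out and only the reduced coproduct contributes. Substituting the reduced part of \eqref{coprod3} into \eqref{dcobar} yields
$$
\partial b_J = \sum_{(I,L),\,I\sqcup L = J,\,I,L\neq\varnothing}\varepsilon(I,L)\,(-1)^{|\alpha_I|}\,b_I\otimes b_L.
$$
Two sign checks then identify this with \eqref{cobardif1}: first, $|\alpha_I| = |b_I|+1$, so $(-1)^{|\alpha_I|} = (-1)^{|b_I|+1}$; second, the Koszul sign $\varepsilon(I,L)$ of \eqref{coprod3} is computed from the degrees of the dual generators $a_i$, and since $|a_i| = n_i = |s b_i|$ it coincides with the Koszul sign of the suspended generators $sb_i$ used in the statement. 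Hence no discrepancy between the two sign conventions arises.

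To pass to the bracket form \eqref{cobardif2} I would pair each ordered partition $(I,L)$ with its reverse $(L,I)$, using the relation $\varepsilon(L,I) = (-1)^{|a_I||a_L|}\varepsilon(I,L)$ exactly as in \eqref{coprod2}. Factoring $\varepsilon(I,L)(-1)^{|b_I|+1}$ out of the two terms and reducing the remaining exponent modulo $2$ collapses the pair into a single graded commutator:
$$
\varepsilon(I,L)(-1)^{|b_I|+1}\bigl(b_I\otimes b_L - (-1)^{|b_I||b_L|}b_L\otimes b_I\bigr) = \varepsilon(I,L)(-1)^{|b_I|+1}[b_I,b_L].
$$
Summing over one representative from each unordered pair recovers \eqref{cobardif2}: the choice of the block containing $j_1$ is precisely the restriction $\theta(1)=1$ on the $(p,s-p)$-shuffles, and $\varepsilon(\theta)$ is the Koszul sign $\varepsilon(I,L)$ of the corresponding partition.

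The only real obstacle is the sign bookkeeping. The conceptual point---that the cobar differential of a coalgebra with vanishing internal differential is built entirely from the reduced coproduct---is immediate, but one must simultaneously track the desuspension sign from \eqref{dcobar}, the Koszul sign in the coproduct, and the sign created when symmetrizing into a commutator, and verify that they amalgamate into the single prefactor $(-1)^{|b_I|+1}$. The crux is the congruence $|\alpha_I||\alpha_L| + (|b_L|+1) - (|b_I|+1) \equiv |b_I||b_L|+1 \pmod 2$, which is exactly what turns the pair of tensor terms into the bracket; everything else is routine substitution.
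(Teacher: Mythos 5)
Your proposal is correct and follows essentially the same route as the paper: substitute the coproduct formula \eqref{coprod3} into \eqref{dcobar} with $\partial_C=0$ to get \eqref{cobardif1}, then pair each ordered partition $(I,L)$ with $(L,I)$ via $\varepsilon(L,I)=(-1)^{|a_I||a_L|}\varepsilon(I,L)$ and collapse into graded commutators. Your sign congruence checks out and matches the paper's computation.
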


\begin{proof}
%If $C$ is a coalgebra, then the undelying module %of the cobar construction is $T(s^{-1} %\overline{C})$.
%we prove the formula for the differential.
%If $$\Delta x = \sum_i x_i \otimes y_i,$$ then
%$$d (s^{-1} x) = \sum_i (-1)^{|x_i|} s^{-1} x_i %\otimes s^{-1} y_i.$$
We denote $b_J:=s^{-1} \alpha_J$. Using formula~\eqref{dcobar} for the differential in the cobar construction together with~\eqref{coprod3} and noting that $\partial_C=0$ in our case, we obtain
$$
\partial(s^{-1} \alpha_J) = 
\sum_{(I, L), J = I \sqcup L}
\varepsilon(I, L) (-1)^{|a_{I}|}
s^{-1} \alpha_{I} \otimes s^{-1} \alpha_{L}
= \sum_{(I, L), J = I \sqcup L}
\varepsilon(I, L) (-1)^{|b_{I}| +1}
b_{I} \otimes b_{L}.
$$
It remains to prove \eqref{cobardif2}:
$$
\partial b_J 
= \sum_{(I, L), J = I \sqcup L, i_1 = j_1}
\varepsilon(I, L) (-1)^{|a_{I}| }
b_{I} \otimes b_{L}
+
\varepsilon(L, I)(-1)^{|a_{L}|}
b_{L} \otimes b_{I}
$$
$$
=\sum_{(I, L), J = I \sqcup L, i_1 = j_1}
\varepsilon(I, L) (-1)^{|a_{I}|}
\Bigl(
b_{I} \otimes b_{L}
+
(-1)^{|a_{L}|+ |a_{I}| + |a_{L}||a_{I}|}
b_{L} \otimes b_{I}
\Bigr)
$$
$$
=\sum_{(I, L), J = I \sqcup L, i_1 = j_1}
\varepsilon(I, L) (-1)^{|b_{I}|+1}
\Bigl(
b_{I} \otimes b_{L}
-
(-1)^{ |b_{L}||b_{I}|}
b_{L} \otimes b_{I}
\Bigr).
$$
\qedhere
\end{proof}

\begin{remark}
When all the spheres in $\sk$ are even-dimensional %(e. g. when $\sk = \stwok$),
we obtain the following formula for differential in $\cobar H_{*}( \sk; \Z)$:
%\begin{equation} \label{diff_stwok}
\[
\partial b_J = 
\sum\limits_{(I,L), I\sqcup L = J}
 b_I \otimes b_L.
\]
%\end{equation}
This formula coincides with \eqref{diff_cpk} when $\sigma=I$ (no multiple elements in~$\sigma$). It follows that the cobar construction of $\stwok$ embeds canonically into the cobar construction of $\cpk$. This will be important when studying higher Whitehead products. 

%\eqref{diff_stwok}, we note,
%that they coincide when $J$ is an simplex (i. e. is not an multiset here).
%It is particularly important since the key map in the definition of canonical %Whitehead products is the inclusion $\stwok \hookrightarrow \cpk$.

\end{remark}

\section{Adams--Hilton models}

The main purpose of this section is to verify that Adams--Hilton models for $\sk$ and $\cpk$ coincide with  cobar constructions of corresponding homology coalgebras.

Given a simply connected CW-complex $X$ with a single $0$-dimensional cell and no $1$-dimensional cells, one can construct an \emph{Adams--Hilton model} 
$\mathbf{AH}(X) = (AH(X), d_X)$, a dg algebra over $\Z$
together with a quasi-isomorphism
$$\theta_X \colon \mathbf{AH}(X) \xrightarrow{\simeq} \cux$$ to normalised
cubical chains of the loop space
\cite{AH}. 
Suppose $X$ is a CW-complex of the form
$$X = pt \cup (\bigcup\limits_{\alpha \in S} e_{\alpha}), \quad |e_\alpha| \geq 2.$$
Then the underlying graded algebra $AH(X)$ is
a free associative algebra,  
$$AH(X) = T(V), \quad V = \langle v_{\alpha} \mid \alpha \in S\rangle, \quad
|v_{\alpha}| = |e_{\alpha}|-1,$$
where each generator
$v_\alpha \in V$ corresponds to a cell $e_\alpha$ in $X$.

The maps $\theta_X$ and $\partial_X$ are defined inductively. Suppose we have already constructed an Adams--Hilton model for the
$n$th skeleton:
$$
\mathbf{AH}(X^n) = (T(V_{\leq n-1}), \partial_{X^n}), \quad
\theta_{X^n} \colon (T(V_{\leq n-1}), \partial_{X^n}) \xrightarrow{\simeq} 
CU_{*}(\Omega X^n).
$$
Define $\partial_{X^{n+1}}(v_{\alpha})$ for a generator $v_{\alpha}$,  $|v_{\alpha}| = n$, corresponding to an $(n+1)$-cell cell $e_{\alpha}$.  Consider the attaching map
$f_{\alpha}\colon S^n \rightarrow X^n$ and the generator 
$\beta \in H_{n-1}(\Omega S^n) \cong \Z$. Then $H (\Omega f_{\alpha}) \beta
\in H_{n-1}( \Omega X^n)$ and, by induction, there exists an 
element $z \in T(V_{\leq n-1}), \,\,|z| = n-1,$ such that 
\begin{equation}
(\theta_{X^n})_{*} [z] = H(\Omega f_{\alpha}) \beta.
\end{equation}
Then we define 
\begin{equation}\label{eq_def_AH_dif}
\partial_{X^{n+1}}(v_{\alpha}) = z.
\end{equation}
For the definition of $\theta_{X^{n+1}}(v_{\alpha})$, see \cite{AH}.

There is an indeterminacy in the construction of
$\partial_X$ and $\theta_X$. We have

\begin{proposition} \label{prop1_AH}
Suppose there is an Adams--Hilton model $(AH(X), \partial_X)$. 
Fix a generator $v_{\alpha} \in AH(X) \cong T(V),$ 
$|v_{\alpha}| = n$, corresponding to an $(n+1)$-cell $e_{\alpha}$.
Suppose $\partial_{X^{n+1}}(v_{\alpha}) = z$.
Then for every $a \in T(V_{\leq n-1})$, $|a| =n$,
there exists an Adams--Hilton
 model $(AH(X), \widetilde{\partial}_X)$ with
$$
\widetilde{\partial}_{X^n} = \partial_{X^n}, \quad
\widetilde{\partial}_{X^{n+1}}(v_{\alpha}) = 
z + \partial_{X^n}(a) \quad\text{and}\quad
\widetilde{\partial}_{X^{n+1}}(v_{\beta}) = 
\partial_{X^{n+1}}(v_{\beta})
$$
for any other generator $v_{\beta}$ of degree~$n$,
$v_{\beta} \neq v_{\alpha}$.
\end{proposition}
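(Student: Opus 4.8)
The plan is to realize the modified model as the conjugate of the given one by an algebra automorphism, so that the required quasi-isomorphism to the cubical chains $\cux$ is produced automatically rather than by re-running the Adams--Hilton induction.

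First I would record the elementary observation that $z + \partial_{X^n}(a)$ is again a cycle in $(T(V_{\leq n-1}),\partial_{X^n})$ lying in the \emph{same} homology class as $z$. Indeed $\partial_{X^n}\bigl(z + \partial_{X^n}(a)\bigr) = \partial_{X^n}(z) = 0$ because $z$ is a cycle and $\partial_{X^n}^2 = 0$, while the two differ by the boundary $\partial_{X^n}(a)$, so $[z + \partial_{X^n}(a)] = [z]$. Consequently $(\theta_{X^n})_{*}[z+\partial_{X^n}(a)] = (\theta_{X^n})_{*}[z] = H(\Omega f_\alpha)\beta$, which shows that $z + \partial_{X^n}(a)$ is an admissible value for the differential of $v_\alpha$ in the defining equation~\eqref{eq_def_AH_dif}.

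The heart of the argument is an algebra automorphism $\phi\colon T(V)\to T(V)$ specified on generators by $\phi(v_\alpha)=v_\alpha+a$ and $\phi(v_\gamma)=v_\gamma$ for every other generator $v_\gamma$. Since $|a|=n=|v_\alpha|$ and $a\in T(V_{\leq n-1})$, this is a well-defined degree-preserving automorphism whose inverse sends $v_\alpha\mapsto v_\alpha-a$, and both $\phi$ and $\phi^{-1}$ restrict to the identity on $T(V_{\leq n-1})$. I then set $\widetilde{\partial}_X := \phi^{-1}\circ\partial_X\circ\phi$. As the conjugate of a derivation by an algebra automorphism, $\widetilde{\partial}_X$ is again a derivation, and $\widetilde{\partial}_X^2=\phi^{-1}\partial_X^2\phi=0$. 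Evaluating on generators, and using that $\phi^{-1}$ fixes everything of degree $\leq n-1$, gives exactly the prescribed values: $\widetilde{\partial}_X=\partial_X$ on $T(V_{\leq n-1})$; $\widetilde{\partial}_{X^{n+1}}(v_\alpha)=\phi^{-1}\partial_X(v_\alpha+a)=\phi^{-1}\bigl(z+\partial_{X^n}(a)\bigr)=z+\partial_{X^n}(a)$; and $\widetilde{\partial}_{X^{n+1}}(v_\beta)=\phi^{-1}\partial_{X^{n+1}}(v_\beta)=\partial_{X^{n+1}}(v_\beta)$ for every remaining degree-$n$ generator $v_\beta\neq v_\alpha$, the last two equalities holding because $\partial_{X^{n+1}}(v_\alpha), \partial_{X^{n+1}}(v_\beta)\in T(V_{\leq n-1})$.

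Finally I would put $\widetilde{\theta}_X := \theta_X\circ\phi$. By construction $\phi$ is an isomorphism of dg algebras $(T(V),\widetilde{\partial}_X)\to(T(V),\partial_X)$, since $\phi\widetilde{\partial}_X=\partial_X\phi$, so the composite $\widetilde{\theta}_X=\theta_X\circ\phi$ is again a quasi-isomorphism onto $\cux$; explicitly $\widetilde{\theta}_X(v_\alpha)=\theta_X(v_\alpha)+\theta_X(a)$, and $\widetilde{\theta}_X$ coincides with $\theta_X$ on all other generators. Hence $(AH(X),\widetilde{\partial}_X)$ together with $\widetilde{\theta}_X$ is an Adams--Hilton model with precisely the stated behaviour on the $(n+1)$-skeleton. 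The step most deserving of care is the one the conjugation device is designed to finesse: guaranteeing that after changing the differential of $v_\alpha$ there still exists a quasi-isomorphism to $\cux$; presenting the new model as a conjugate of the old makes this immediate, so no independent verification of the Adams--Hilton inductive compatibility is needed.
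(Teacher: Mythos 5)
Your proof is correct, and your first paragraph already contains the entirety of the paper's argument: the paper disposes of this proposition with the single remark that it ``follows from the inductive construction of $\partial_X$'', the point being exactly the one you make --- at the stage where $\partial_{X^{n+1}}(v_\alpha)$ is chosen, any cycle in $T(V_{\leq n-1})$ whose class maps to $H(\Omega f_\alpha)\beta$ under $(\theta_{X^n})_*$ is admissible, and $z+\partial_{X^n}(a)$ is such a cycle since it differs from $z$ by a boundary. Your conjugation device $\widetilde{\partial}_X=\phi^{-1}\partial_X\phi$, $\widetilde{\theta}_X=\theta_X\circ\phi$ is a genuinely different (and more self-contained) route: it buys an explicit quasi-isomorphism to $\cux$ and an explicit dga-isomorphism between the old and new models, instead of appealing to the existence of $\theta_{X^{n+1}}(v_\alpha)$ supplied by re-running the Adams--Hilton induction. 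One small caveat: being a free dga on the cells with a quasi-isomorphism to $\cux$ is not literally the definition of an Adams--Hilton model, so your closing claim that ``no independent verification of the inductive compatibility is needed'' is slightly too quick. What makes it work is that $\phi$ preserves every $T(V_{\leq k})$ (it is the identity for $k<n$ and sends $v_\alpha$ into $T(V_{\leq n})$), so $\widetilde{\theta}_X$ retains the skeletal compatibility of $\theta_X$, and for every generator $v_\gamma$ one has $(\widetilde{\theta}_{X})_*[\widetilde{\partial}v_\gamma]=(\theta_X)_*[\partial v_\gamma]=H(\Omega f_\gamma)\beta_\gamma$, so the defining condition~\eqref{eq_def_AH_dif} holds at every level of the induction. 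These checks are immediate, but they are the ones that certify the conjugate as an Adams--Hilton model rather than merely a quasi-isomorphic dga.
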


\begin{proof}
It follows from the inductive construction of $\partial_X$.
\end{proof}

%\begin{remark} \label{remark_AH}
%There is an indeterminacy in the construction of
%maps $\partial_X$ and $\theta_X$, since, for example, the element %$z$ 
%is not unique. We could 
%define
%$\partial_{X^{n+1}}(v_{\alpha})$ as 
%$$ \partial_{X^{n+1}}(v_{\alpha}) = z + \partial_{X^n} (a), %\,\,|a| = n, \,\, 
%a \in T(V_{\leq n-1}).$$
%Therefore there are many Adams--Hilton models
%for a given CW-complex, corresponding to different choices of %$\partial_{X^{n+1}}(v_{\alpha})$. 
%\end{remark}

%The trick with a choice of $\partial_{X^{n+1}}(v_{\alpha})$ has been %used in \cite[Th.~4.1]{AH} and \cite[Th.~4.2]{AH}. We are going %to use it as well.

Fix Adams--Hilton models $(\mathbf{AH}(X), \theta_X)$
and $(\mathbf{AH}(Y), \theta_Y)$
for $X$ and
$Y$, where $AH(X) \cong T(V)$, $AH(Y) \cong T(W)$.
Then for any map $f\colon X \rightarrow Y$ there is an inductive construction~\cite{AH} of a dga-morphism
$\mathbf{AH}(f)\colon \mathbf{AH}(X) \rightarrow \mathbf{AH}(Y)$ and a chain homotopy $\psi_f\colon \mathbf{AH}(X) \rightarrow CU_{*}(\Omega Y)$ between $CU_{*}(\Omega f)\circ\theta_X$ and $\theta_Y\circ\mathbf{AH}(f)$ in the following diagram:
$$
\xymatrix{
\mathbf{AH}(X) \ar[r]^{\theta_X} \ar[d]^{\mathbf{AH}(f)} & CU_{*}(\Omega X) \ar[d]^{CU_{*}(\Omega f)} \\
\mathbf{AH}(Y) \ar[r]^{\theta_Y} & CU_{*}(\Omega Y)
}
$$
We refer to $(\mathbf{AH}(f), \psi_f)$ as an \textit{Adams--Hilton model for} $f$.

Similarly, there is an indeterminacy in the construction of
$\mathbf{AH}(f)$ and $\psi_f$. We have

\begin{proposition} \label{prop2_AH_f}
Suppose there is an Adams--Hilton model $\ah(f)$.
Fix a generator $v_{\alpha} \in AH(X) \cong T(V),$ 
$|v_{\alpha}| = n$, corresponding to an $(n+1)$-cell $e_{\alpha}$.
Suppose ${\ah(f)}(v_{\alpha}) = x$.
Then for every $b \in T(W_{\leq n-1})$, $|b| =n+1$,
there exists an Adams--Hilton model $\widetilde{\ah(f)}$ with
$$
\widetilde{\ah(f)}|_{T(V_{\leq n-1})} = 
{\ah(f)}|_{T(V_{\leq n-1})},
\quad
\widetilde{\ah(f)}(v_{\alpha}) = 
x + \partial_{Y^n}(b)$$
and
$
\widetilde{\ah(f)}(v_{\beta}) = 
{\ah(f)}(v_{\beta})$
for any other generator $v_{\beta}$ of degree~$n$,
$v_{\beta} \neq v_{\alpha}$.
\end{proposition}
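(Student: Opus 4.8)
The plan is to mirror the proof of Proposition~\ref{prop1_AH}: the indeterminacy is internal to the inductive construction of $(\ah(f),\psi_f)$, so I only need to check that replacing the single value $\ah(f)(v_\alpha)=x$ by $x+\partial_{Y^n}(b)$ leaves intact the two conditions that an Adams--Hilton model for $f$ must satisfy skeleton by skeleton, namely that $\ah(f)$ is a chain map and that $\psi_f$ is a chain homotopy filling the square relating $\cuf\circ\theta_X$ and $\theta_Y\circ\ah(f)$. First I would record the situation at the stage where $v_\alpha$ (with $|v_\alpha|=n$) is adjoined: on $T(V_{\leq n-1})$ the morphism $\ah(f)$ and the homotopy $\psi_f$ are already fixed, the chain-map condition forces $\partial_{Y^n}(x)=\ah(f)(\partial_X v_\alpha)$, and the homotopy condition reads $\cuf(\theta_X(v_\alpha))-\theta_Y(x)=\partial\,\psi_f(v_\alpha)+\psi_f(\partial_X v_\alpha)$.

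Next I would define $\widetilde{\ah(f)}$ to agree with $\ah(f)$ on $T(V_{\leq n-1})$ and on every degree-$n$ generator other than $v_\alpha$, and to send $v_\alpha\mapsto x+\partial_{Y^n}(b)$. The chain-map condition is preserved for free: since $|b|=n+1$ and $b\in T(W_{\leq n-1})$, we get $\partial_{Y^n}(x+\partial_{Y^n}b)=\partial_{Y^n}(x)=\ah(f)(\partial_X v_\alpha)=\widetilde{\ah(f)}(\partial_X v_\alpha)$, using $\partial_Y^2=0$ together with the fact that $\partial_X v_\alpha$ lies in $T(V_{\leq n-1})$, where the two morphisms coincide. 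To restore the homotopy condition I would set $\widetilde{\psi}_f=\psi_f$ on $T(V_{\leq n-1})$ and on the other degree-$n$ generators, and put $\widetilde{\psi}_f(v_\alpha)=\psi_f(v_\alpha)-\theta_Y(b)$. Because $\theta_Y$ is a chain map, $\theta_Y(x+\partial_{Y^n}b)=\theta_Y(x)+\partial\,\theta_Y(b)$, and substituting the displayed homotopy identity shows that $\widetilde{\psi}_f$ fills the square for $\widetilde{\ah(f)}$; the sign on $\theta_Y(b)$ is precisely the one dictated by $\theta_Y$ commuting with the differentials.

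Finally I would extend $\widetilde{\ah(f)}$ and $\widetilde{\psi}_f$ to the generators of degree $>n$. Here nothing essentially new happens: changing the value on $v_\alpha$ does alter $\widetilde{\ah(f)}(\partial_X v_\gamma)$ for those higher generators $v_\gamma$ whose boundary involves $v_\alpha$, but the relevant target is still a cycle (by the chain-map condition already secured on lower generators), so the standard Adams--Hilton lifting step of~\cite{AH} supplies admissible values $\widetilde{\ah(f)}(v_\gamma)$ together with the accompanying homotopy, exactly as in the original construction. I expect the only genuinely delicate point to be the bookkeeping of this re-extension and the compatibility of the homotopy adjustment with the chain-homotopy identity; everything else reduces, as in Proposition~\ref{prop1_AH}, to the two observations that $\partial_Y^2=0$ and that $\theta_Y$ is a chain map. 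This produces the required model $\widetilde{\ah(f)}$ and completes the argument.
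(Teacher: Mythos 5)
Your argument is correct and is essentially the paper's own proof spelled out in more detail: the paper likewise modifies the single choice made at the stage of $v_\alpha$ in the inductive construction of \cite[Th.~3.1]{AH} by adding the boundary $\partial_{Y^n}(b)$, noting that this "again fits the inductive procedure," which is exactly your verification that $\partial_Y^2=0$ preserves the chain-map condition and that $\theta_Y$ being a chain map lets the homotopy be corrected by $-\theta_Y(b)$. Your explicit bookkeeping of $\widetilde{\psi}_f$ and of the re-extension to generators of degree $>n$ is a welcome elaboration of what the paper leaves implicit.
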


\begin{proof}
It follows from the inductive construction of an Adams--Hilton map.
Following the notation of \cite[Th.~3.1]{AH}, one needs to construct a map $\varphi$ inductively on the generators~$a$. By induction,
$\varphi d a=d g_2$ for some~$g_2$, and $\varphi$ is then defined by $\varphi(a) = g_2 + z_2$ for certain~$z_2$. We modify these choices by defining
$\widetilde{g_2} = g_2 + d b$ and $\widetilde{\varphi}(a) = g_2 + d b + z_2$, which again fits the inductive procedure.
\end{proof}

%\begin{remark} \label{remark_AH_f}
%Suppose
%$\ah(f)(v_{\alpha}) = x$, instead we could define %$\ah(f)(v_{\alpha})$ as
%$$ \ah(f)(v_{\alpha}) = x + \partial_{X^n} (a), \,\,|a| = n, \,\, 
%a \in T(V_{\leq n-1}).$$
%By construction in \cite{AH}, we can always extend this choice of %$\ah(f)(v_{\alpha})$ and obtain a model for $f$. 
%\end{remark}

\begin{theorem}[\cite{AH}, \cite{Anick}] \label{th_anick}
Adams--Hilton models have the following properties:

\begin{enumerate}
\item \label{itm:1}
If $f \simeq g$, then $ \mathbf{AH}(f) \simeq \mathbf{AH}(g)$ in DGA. In particular, two models for the
same map must be homotopic in DGA.

\item \label{itm:2}
A model for the identity map $1_X$ is $(1_{\mathbf{AH}(X)}, 0 )$.

\item \label{itm:3}
If $X \xrightarrow{f} Y \xrightarrow{g} Z$ then $\mathbf{AH}(g \circ f)$
may be taken as $\mathbf{AH}(g) \circ \mathbf{AH}(f)$.

\item \label{itm:4}
Let $X_{0}$ be a subcomplex of $X$, say 
$X= pt \cup \left(\bigcup_{\alpha \in S} e_{\alpha}\right)$, 
$X_{0}= pt \cup \left(\bigcup_{\alpha \in S_{0}} e_{\alpha}\right)$, 
$S_{0} \subseteq S$. 
Given any model $\left(AH\left(X_{0}\right), \partial_{X_{0}}, \theta_{X_{0}}\right)$, there is a model
$\left(AH(X), \partial_{X}, \theta_{X}\right)$ 
for which $\partial_{X}$ and $\theta_{X}$ are extensions over $AH(X)$ of $\partial_{X_{0}}$ and $\theta_{X_{0}}$.

\item \label{itm:5}
Under the hypotheses of \eqref{itm:4},
a model for the injection $X_0 \hookrightarrow X$ is
the injection $\ah(X_0) \hookrightarrow \ah(X)$.

\item \label{itm:6}
Under the hypotheses of \eqref{itm:4}, 
let $f\colon X \rightarrow Y$ 
be a map and put 
$f_{0}=\left.f\right|_{X_{0}}$. 
Given any models 
$\left(\mathbf{AH}(Y), \theta_{Y}\right)$ 
and 
$\left(\mathbf{AH}\left(f_{0}\right), \psi_{f_{0}}\right)$, 
there is a model $\left(\mathbf{AH}(f), \psi_{f}\right)$ 
for which $\mathbf{AH}(f)$ and $\psi_{f}$ are extensions over $AH(X)$ of $\mathbf{AH}\left(f_{0}\right)$ and $\psi_{f_{0}}$.

\item \label{itm:7}
Let $\left\{X_{\beta}\right\}$ be a family of subcomplexes of a CW-complex $X$, and suppose $X=\bigcup_{\beta} X_{\beta} .$ Suppose we have models $\left(\mathbf{AH}\left(X_{\beta}\right), \theta_{X_{\beta}}\right)$ satisfying the coherency conditions
$$
\begin{gathered}
\left.\partial_{X_{\beta}}\right|_{AH\left(X_{\beta} \cap X_{\gamma}\right)}=\left.\partial_{X_{\gamma}}\right|_{AH\left(X_{\beta} \cap X_{\gamma}\right)}, \\
\left.\theta_{X_{\beta}}\right|_{AH\left(X_{\beta} \cap X_{\gamma}\right)}=\left.\theta_{X_{\gamma}}\right|_{AH\left(X_{\beta} \cap X_{\gamma}\right)}
\end{gathered}
$$
for each pair of indices $(\beta, \gamma) .$ Then $\operatorname{colim}\left\{\mathbf{AH}\left(X_{\beta}\right), \theta_{X_{\beta}}\right\}$ is an Adams-Hilton model for $X .$

\item \label{itm:8}
Under the hypotheses of \eqref{itm:7}, let $f\colon X \rightarrow Y$ be a map and put $f_{\beta}=\left.f\right|_{X_{\beta}}$. Fixing a model $\left(\mathbf{AH}(Y), \theta_{Y}\right)$, suppose we have models $\left(\mathbf{AH}\left(f_{\beta}\right), \psi_{f_{\beta}}\right)$ satisfying the coherency conditions
$$
\begin{aligned}
\left.\mathbf{AH}\left(f_{\beta}\right)\right|_{AH\left(X_{\beta} \cap X_{\gamma}\right)} &=\left.\mathbf{AH}\left(f_{\gamma}\right)\right|_{AH\left(X_{\beta} \cap X_{\gamma}\right)}, \\
\left.\psi_{f_{\beta}}\right|_{AH\left(X_{\beta} \cap X_{\gamma}\right)} &=\left.\psi_{f_{\gamma}}\right|_{AH\left(X_{\beta} \cap X_{\gamma}\right)} \cdot
\end{aligned}
$$
Then $\operatorname{colim}\left\{\mathbf{AH}\left(f_{\beta}\right), \psi_{f_{\beta}}\right\}$ is an Adams--Hilton model for $f$.

\item \label{itm:9}
Let $f_{0}\colon S^{n} \rightarrow X_{0}$, 
$n \geq 2$, 
and extend $f_{0}$ to 
$f\colon D^{n+1} \rightarrow X=X_{0} \cup_{f_{0}} e^{n+1}$.
Choosing the standard three-cell decomposition of $D^{n+1}$ we have $AH\left(D^{n+1}\right)=$ $T( z, z_{0})$ with $\left|z_{0}\right|=n-1$, $|z|=n$, $\partial(z)=-z_{0}$.
Let $\left(\mathbf{AH}\left(X_{0}\right), \theta_{X_{0}}\right)$ and
$\left(\mathbf{AH}\left(f_{0}\right), \psi_{f_{0}}\right)$ be models for $X_{0}$ and $f_{0}$. 
Then one for $X$ is given by 
$AH(X)=AH\left(X_{0}\right) \otimes T(v_{f})$, 
$\left|v_{f}\right|=n$, 
$\partial_{X}(x)=\partial_{X_{0}}(x)$ 
for 
$x \in AH\left(X_{0}\right)$, 
$\partial_{X}\left(v_{f}\right) =
-\mathbf{AH}\left(f_{0}\right)\left(z_{0}\right)$,
$\theta_{X}(x)=\theta_{X_{0}}(x)$ for $x \in AH\left(X_{0}\right), \theta_{X}\left(v_{f}\right)={CU}_{*}(\Omega f)\left(\theta_{D^{n+1}}(z)\right)+$
$\psi_{f_{0}}\left(z_{0}\right)$.

%\item[(9)]
%Under the hypotheses of $(\mathrm{k})$, let $g_{0}\colon X_{0} \rightarrow Y_{0}$ have model $\left(\mathbf{A}\left(g_{0}\right), \psi_{g_{0}}\right)$. Extend $g_{0}$ to $g\colon X \rightarrow Y=Y_{0} \cup_{g_{0} f_{0}} e^{n+1}$ in the obvious way, and choose models for $X$ and for $Y$ as in (k). Then an extension $\left(\mathbf{A}(g), \psi_{g}\right)$ of $\left(\mathbf{A}\left(g_{0}\right), \psi_{g_{0}}\right)$ may be chosen for which $\mathbf{A}(g)\left(b_{f}\right)=\left(b_{g f}\right), \psi_{g}\left(b_{f}\right)=0$
\end{enumerate}

Further properties concern the model for a product space 
$X \times Y$. 
Suppose
$X=pt \cup\left(\bigcup_{\alpha \in S} e_{\alpha}\right)$ 
and 
$Y= pt \cup\left(\bigcup_{\alpha \in S^{\prime}} e_{\alpha}\right)$, 
then $AH(X \times Y)$ is 
the tensor algebra with the set of generators
$\left\{v_{\alpha} \mid \alpha \in S^{\prime \prime}\right\}$, where
$$
S^{\prime \prime}=S \cup S^{\prime} \cup\left(S \times S^{\prime}\right)
$$
(Adams and Hilton point out that $X \times Y$ need not be a CW complex for their construction to exist.) 
We define the ring homomorphism
$$
\nu_{X Y}\colon AH(X \times Y) \rightarrow AH(X) \otimes AH(Y)
$$
by $\nu_{X Y}\left(v_{\alpha}\right) = v_{\alpha} \otimes 1$ 
for $\alpha \in S$, 
$\nu_{X Y}\left(v_{\alpha}\right)=1 \otimes v_{\alpha}$ 
for $\alpha \in S^{\prime}$, 
$\nu_{X Y} \left(v_{\alpha}\right)=0$ for
$\alpha \in\left(S \times S^{\prime}\right)$.

\begin{enumerate}
\item[(10)]  %\label{itm:10}
Given $\left(\mathbf{AH}(X), \theta_{X}\right)$ and $\left(\mathbf{AH}(Y), \theta_{Y}\right)$, it is possible to choose inductively $\partial_{X \times Y}$ and $\theta_{X \times Y}$ for $AH(X \times Y)$ such that $\nu_{X Y}$ is a dga-morphism and the diagram 

$$
\xymatrix{
\mathbf{AH}(X \times Y)\ar[d]^{\theta_{X \times Y}}  \ar[rr]^{\nu_{X Y}} &  & 
\mathbf{AH}(X) \otimes \mathbf{AH}(Y) \ar[d]^{\theta_X \otimes \,\theta_Y} \\
CU_{*}(\Omega (X \times Y)) \ar[r] & CU_{*}(\Omega X \times \Omega Y) \ar[r] & CU_{*}(\Omega X) \otimes  CU_{*}(\Omega Y)
}
$$
commutes up to chain homotopy and leads to a commutative diagram of isomorphisms of homology rings.

Furthermore, letting $X \stackrel{p_X}{\leftarrow} X \times Y \stackrel{p_Y}{\rightarrow} Y$ denote the projections, we may take $\mathbf{AH}\left(p_{X}\right)=\pi_{1} \nu_{X Y}$ and $\mathbf{AH}\left(p_{Y}\right)=\pi_{2} \nu_{X Y}$.

\item[(11)] % \label{itm:11}
Let $AH(f)\colon AH(X_1) \rightarrow AH(Y_1)$ and
$AH(g)\colon AH(X_2) \rightarrow AH(Y_2)$ be associated with maps
$f\colon X_{1} \rightarrow Y_1$ and $g\colon X_{2} \rightarrow Y_2$,
and let $\partial$, $\theta$ be chosen on $X_1 \times X_2$, 
$Y_1 \times Y_2$ 
so that $\nu_{X_1 X_2}$, $\nu_{Y_1 Y_2}$ 
are dga-morphisms, according to $(10)$. 
Then
we may choose a map
$$
\varphi\colon \ah (X_1 \times X_2) \rightarrow \ah (Y_1 \times Y_2)
$$
so that the following diagram is commutative
$$
\xymatrix{
\mathbf{AH}(X_1 \times X_2) \ar[rr]^{\nu_{X_1 X_2}} 
\ar[d]^{\varphi} && \mathbf{AH}(X_1) \otimes \mathbf{AH}(X_2) 
\ar[d]^{\mathbf{AH}(f) \otimes \mathbf{AH}(g)} \\
\mathbf{AH}(Y_1 \times Y_2) \ar[rr]^{\nu_{Y_1 Y_2}} 
&& \mathbf{AH}(Y_1) \otimes \mathbf{AH}(Y_2)
}
$$
Moreover, any such map  $\varphi$ that makes the diagram commutative is an Adams--Hilton model for $f \times g$.
\end{enumerate}
\end{theorem}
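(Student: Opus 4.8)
My plan is to derive every item from the single inductive mechanism recalled above: a model is built skeleton by skeleton, the differential of each new generator $v_\alpha$ being a cycle $z$ in the already-constructed model with $(\theta_{X^n})_*[z]=H(\Omega f_\alpha)\beta$, and $\theta_X$ extended compatibly. Against this backdrop the functorial clauses cost almost nothing. For \eqref{itm:2} I would simply observe that $(1_{\ah(X)},0)$ satisfies the defining square because $\cuf=\mathrm{id}$ when $f=1_X$; for \eqref{itm:3}, since $CU_*(\Omega-)$ is a functor, the composite $\ah(g)\circ\ah(f)$ together with the concatenation of the two chain homotopies fills the composed square and is therefore a model for $g\circ f$.

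The first genuinely homotopy-theoretic clause is \eqref{itm:1}. Here I would run the inductive constructions for $f$ and $g$ in parallel: since $f\simeq g$ gives $\cuf\simeq CU_*(\Omega g)$ as chain maps, at each skeletal stage the two cycles lifting the respective sides differ only by a boundary, and by Proposition~\ref{prop2_AH_f} this freedom can be absorbed into a DGA homotopy between $\ah(f)$ and $\ah(g)$ (equivalently, one models a homotopy $X\times I\to Y$ and restricts along the two end-inclusions). The relative clauses \eqref{itm:4}--\eqref{itm:6} I would settle by freeness of the tensor algebra: the cells of $X_0$ form a subset of those of $X$, so $AH(X_0)\hookrightarrow AH(X)$ as a free subalgebra and the induction producing $\partial_{X_0},\theta_{X_0}$ is continued over the remaining generators to give \eqref{itm:4}; the inclusion of models is then tautologically a model for $X_0\hookrightarrow X$, which is \eqref{itm:5}, and the same continuation performed for a map yields \eqref{itm:6}.

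For the gluing clauses \eqref{itm:7}--\eqref{itm:8} I would use that $AH(X)=\colim AH(X_\beta)$ as graded algebras, the cells of $X$ being the union of those of the $X_\beta$; the coherency hypotheses make $\partial$ and $\theta$ well defined on the colimit, and $\theta=\colim\theta_\beta$ stays a quasi-isomorphism because cubical homology commutes with the filtered colimit of skeletal inclusions. Clause~\eqref{itm:9} is then the special case of \eqref{itm:6} for a single attached cell, once one records the standard three-cell model $T(z,z_0)$, $\partial z=-z_0$, of $D^{n+1}$: the value $\partial_X(v_f)=-\ah(f_0)(z_0)$ is forced because $z_0$ carries the fundamental class of the attaching sphere. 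The product clauses $(10)$ and $(11)$ rest on the loop-space splitting $\Omega(X\times Y)\simeq\Omega X\times\Omega Y$; the generator set $S''=S\cup S'\cup(S\times S')$ mirrors the cells of $X\times Y$, and I would define $\nu_{XY}$ on generators as stated, verify it is a dga-morphism by an inductive comparison of differentials, and note that on homology it realises the Künneth isomorphism for $H_*(\Omega X\times\Omega Y)$, hence is a quasi-isomorphism; clause~$(11)$ then follows by extending $\ah(f)\otimes\ah(g)$ back across $\nu_{XY}$ and invoking the uniqueness in Proposition~\ref{prop2_AH_f}.

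I expect the main obstacle to be \eqref{itm:1}, together with its multiplicative shadow in the quasi-isomorphism of $\nu_{XY}$. The skeletal induction determines a model only up to the freedom of Proposition~\ref{prop2_AH_f}, so converting a topological homotopy into an honest DGA homotopy requires controlling these choices coherently in every dimension at once; this is precisely the work done by the cylinder (or parallel-induction) argument. All of the above is carried out in full in~\cite{AH} and~\cite{Anick}, so in the paper I would state the theorem with attribution and refer there for the details.
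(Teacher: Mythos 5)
Your proposal ends exactly where the paper does: the paper gives no proof of this theorem, only the remark following it, which attributes items (1)--(4) and (6)--(10) to \cite[Th.~8.1]{Anick}, item (11) to \cite[Cor.~4.1]{AH}, and observes that (5) is immediate from the inductive construction; your concluding decision to state the result with attribution and defer to those references is the same approach. Your supplementary sketches are reasonable outlines of the standard arguments (the only shaky one is item (7), where ``cubical homology commutes with the filtered colimit'' is not by itself the reason $\theta=\colim\theta_\beta$ remains a quasi-isomorphism, since $\Omega$ does not commute with unions of subcomplexes --- this is genuinely the delicate part of Anick's proof), but since both you and the paper ultimately cite the literature for the details, this does not affect the correctness of the approach.
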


\begin{remark}
The properties \eqref{itm:1}--\eqref{itm:4}, \eqref{itm:6}--(10) of Theorem~\ref{th_anick} are from \cite[Th.~8.1]{Anick}; the property \eqref{itm:5} follows easily from the construction of
Adams--Hilton models;
the property~(11) is  
\cite[Cor.~4.1]{AH}. 
\end{remark}

\begin{theorem}[{\cite[Th.~3.4]{AH}}]
An Adams--Hilton model for $\mathbb{C}P^2$  is given by $$\mathbf{AH}(\mathbb{C}P^2) = ( T(a_1, a_2), \partial), \quad  |a_1| =1, \quad  |a_2| = 3,  
\quad \partial a_1 = 0, \quad \partial a_2 = a_1^2 .$$
\end{theorem}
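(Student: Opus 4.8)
The plan is to carry out the inductive Adams--Hilton construction directly on the standard CW structure of $\mathbb{C}P^2$, which has exactly one cell in each of the dimensions $0$, $2$, and $4$. Its $2$-skeleton is $\mathbb{C}P^1 \cong S^2$, and the single $4$-cell is attached along the Hopf map $\eta \colon S^3 \to S^2$. Since the generators of the underlying free algebra correspond to the cells of positive dimension, with $|v_\alpha| = |e_\alpha| - 1$, we obtain $AH(\mathbb{C}P^2) = T(a_1, a_2)$ with $|a_1| = 1$ and $|a_2| = 3$, as claimed. The $2$-cell is attached to the basepoint by the constant map, so the model of the $2$-skeleton is the model of $S^2$, namely $(T(a_1), 0)$ equipped with a quasi-isomorphism $\theta_{S^2} \colon T(a_1) \to CU_{*}(\Omega S^2)$ sending $a_1$ to a cycle representing a generator of $H_1(\Omega S^2)$; in particular $\partial a_1 = 0$.

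For the $4$-cell I would apply the inductive formula \eqref{eq_def_AH_dif} with $n = |a_2| = 3$ and attaching map $\eta \colon S^3 \to S^2$ (the $2$-skeleton, which coincides with the $3$-skeleton since $\mathbb{C}P^2$ has no $3$-cell). By construction $\partial a_2 = z$, where $z \in T(a_1)$ is the degree-$2$ element determined by $(\theta_{S^2})_{*}[z] = H(\Omega\eta)(\beta)$ and $\beta$ generates $H_2(\Omega S^3) \cong \Z$. Recall that by the James splitting $H_{*}(\Omega S^2; \Z) = T(x) = \Z[x]$ with $|x| = 1$, so the only degree-$2$ class is $x^2$ and $(\theta_{S^2})_{*}[a_1^k] = x^k$. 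Hence the computation reduces to identifying the integer $h$ with $H(\Omega\eta)(\beta) = h\,x^2$, after which $\partial a_2 = h\,a_1^2$.

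The crux --- and the main obstacle --- is this loop-space computation, which is exactly where the Hopf invariant enters: for any map $S^{2n-1} \to S^n$ of Hopf invariant $h$, the induced homomorphism on $H_{2n-2}$ of loop spaces sends the generator to $h$ times the square of the generator of $H_{n-1}(\Omega S^n)$. Since $\eta$ has Hopf invariant $1$ --- equivalently, the cup square generates $H^4(\mathbb{C}P^2; \Z)$ --- we get $H(\Omega\eta)(\beta) = x^2$, whence $z = a_1^2$ and $\partial a_2 = a_1^2$, the sign being fixed to $+$ by the orientation conventions. I would establish this loop-space identity either by quoting the standard relation between the cohomology Hopf invariant and the second James--Hopf invariant on $\Omega S^2$, or, dually, by reading it off the coalgebra structure of $H_{*}(\mathbb{C}P^2)$, where the nonzero cup square forces the term $c_2 \otimes c_2$ in $\Delta c_4$ (so that the cobar differential of $s^{-1}c_4$ is $\pm a_1^2$). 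As a consistency check, one verifies that the resulting dga has the correct homology: with $\partial a_2 = a_1^2$ the class $a_1^2$ becomes a boundary, so $H_2(AH(\mathbb{C}P^2)) = 0$, in agreement with $\pi_3(\mathbb{C}P^2)\otimes\Q = 0$ (e.g. via the fibration $S^1 \to S^5 \to \mathbb{C}P^2$); the alternative choice $\partial a_2 = 0$ would wrongly leave $a_1^2$ as a nonzero homology class.
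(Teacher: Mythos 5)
Your argument is correct, but note that the paper itself offers no proof of this statement: it is quoted verbatim as Theorem~3.4 of Adams--Hilton \cite{AH}, so there is nothing in the text to compare against except the citation. What you have written is essentially a reconstruction of the original Adams--Hilton argument for a two-cell complex $S^n\cup_f e^{2n}$: the cell structure forces $AH(\mathbb{C}P^2)=T(a_1,a_2)$ with $|a_1|=1$, $|a_2|=3$ and $\partial a_1=0$, and the inductive formula \eqref{eq_def_AH_dif} reduces $\partial a_2$ to computing $H(\Omega\eta)(\beta)\in H_2(\Omega S^2)\cong\Z\langle x^2\rangle$, where the coefficient is exactly the Hopf invariant of $\eta$; since that is $1$, one gets $\partial a_2=a_1^2$ after orienting the $4$-cell. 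This is sound, and your homology consistency check via $\Omega\mathbb{C}P^2\simeq S^1\times\Omega S^5$ matches the technique the paper uses later in the proof of Theorem~\ref{theor_AH_cpn}. One caveat: of your two suggested justifications for the key loop-space identity, the first (the relation between the cohomology Hopf invariant and the second James--Hopf invariant) is the right one to cite; the second (``reading it off the coalgebra structure of $H_*(\mathbb{C}P^2)$ via the cobar differential'') is mildly circular in this context, since the identification of the Adams--Hilton differential with the cobar differential of the homology coalgebra is precisely the kind of statement being established in this paper and does not hold for arbitrary spaces (cf.\ the remark after Theorem~\ref{th_AH_cobar_coincide} about Moore spaces). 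With the first justification, the proof stands.
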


\begin{theorem} \label{theor_AH_cpn}
An Adams--Hilton model for $\mathbb{C}P^n$  is given by
\begin{gather}
\label{equat_AH_CPn}
\mathbf{AH}(\mathbb{C}P^n) = 
( T(a_1, \ldots, a_n), \partial), \quad  |a_i| =2i-1,\\
\notag
\partial a_1 = 0, 
%\quad \partial a_2 = a_1^2,
\quad
\partial a_i= a_1 \otimes a_{i-1} + a_2 \otimes a_{i-2} + \cdots +
 a_{i-1} \otimes a_1, \quad i = 2, \ldots, n.
\end{gather}
\end{theorem}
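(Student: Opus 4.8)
The plan is to argue by induction on $n$, constructing the model one cell at a time. The space $\mathbb{C}P^n$ has a CW-structure with a single cell $e^{2i}$ in each even dimension $0 \leq 2i \leq 2n$, so the generator $a_i$ (with $|a_i| = 2i-1$) corresponds to $e^{2i}$, and $\mathbb{C}P^{n-1}$ is the $(2n-2)$-skeleton, i.e.\ a subcomplex. For the base case $n=1$ we have $\mathbb{C}P^1 = S^2$, whose Adams--Hilton model is $(T(a_1), 0)$ with $|a_1| = 1$, in agreement with the stated formula (and the case $n=2$ is the preceding theorem). For the inductive step, assume $\mathbf{AH}(\mathbb{C}P^{n-1}) = (T(a_1, \ldots, a_{n-1}), \partial)$ with the asserted differential and structure map $\theta_{n-1}$, and write $\mathbb{C}P^n = \mathbb{C}P^{n-1} \cup_h e^{2n}$, where $h\colon S^{2n-1} \to \mathbb{C}P^{n-1}$ is the attaching map of the top cell (the Hopf map). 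By properties~\eqref{itm:4} and~\eqref{itm:9} of Theorem~\ref{th_anick} one obtains a model $\mathbf{AH}(\mathbb{C}P^n) = T(a_1, \ldots, a_{n-1}) \otimes T(a_n) = T(a_1, \ldots, a_n)$ extending $\mathbf{AH}(\mathbb{C}P^{n-1})$, in which the only new datum is
$$
\partial a_n = -\,\mathbf{AH}(h)(z_0),
$$
where $z_0$ is the bottom generator of the standard model of $S^{2n-1}$, $|z_0| = 2n-2$. Thus the whole theorem reduces to identifying $\mathbf{AH}(h)(z_0) \in T(a_1, \ldots, a_{n-1})_{2n-2}$, equivalently the image $H(\Omega h)(\beta) \in H_{2n-2}(\Omega \mathbb{C}P^{n-1})$ of the fundamental class $\beta \in H_{2n-2}(\Omega S^{2n-1})$ under $\theta_{n-1}$.

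A first, purely algebraic, observation is that the candidate $\omega_n := \sum_{i+j=n} a_i \otimes a_j$ (the sum over $i, j \geq 1$) is a cycle: a direct computation from $\partial a_i = \sum_{p+q=i} a_p \otimes a_q$ and the Leibniz rule shows that the two resulting triple sums indexed by $\{(p,q,j) : p+q+j = n\}$ cancel, so $\partial \omega_n = 0$. This is precisely the instance of $\partial^2 = 0$ coming from coassociativity of the coalgebra $H_*(\cp)$, and it shows that the choice $\partial a_n = \omega_n$ is at least consistent with $\mathbf{AH}(\mathbb{C}P^n)$ being a dg algebra. It remains to show that this cycle represents $\pm H(\Omega h)(\beta)$, so that (after matching signs) $\partial a_n = \omega_n$ is a legitimate choice in the inductive construction; the residual freedom is governed by Proposition~\ref{prop1_AH}.

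The main obstacle is exactly this identification of the attaching map at the level of loop-space homology. I would approach it through the fibration $S^{2n-1} \xrightarrow{h} \mathbb{C}P^{n-1} \xrightarrow{\iota} \cp$, which exhibits $h$ as the fibre inclusion of the homotopy fibre sequence of $\iota$ and in which $\cp = K(\Z,2)$; looping it gives $\Omega S^{2n-1} \to \Omega \mathbb{C}P^{n-1} \to \Omega \cp = S^1$. Tracking $\beta$ through the associated homology spectral sequence, and using the multiplicative structure of $H_*(\Omega \mathbb{C}P^{n-1})$ already supplied by the inductive hypothesis, should identify $H(\Omega h)(\beta)$ with the class of $\omega_n$. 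Alternatively, one may bypass the geometry: since $\mathbb{C}P^n$ is formal, there is a chain of dga quasi-isomorphisms $\cobar C_*(\mathbb{C}P^n) \simeq \cobar H_*(\mathbb{C}P^n) = (T(a_1, \ldots, a_n), \partial)$, and combining this with Adams' theorem together with the uniqueness of Adams--Hilton models up to dga-homotopy (Theorem~\ref{th_anick}\,\eqref{itm:1}) shows that $(T(a_1, \ldots, a_n), \partial)$, whose generators already match the cells of $\mathbb{C}P^n$, is an Adams--Hilton model. In either route the sign in $\partial a_n = -\mathbf{AH}(h)(z_0)$ must be reconciled with the sign convention in $\omega_n$; verifying this bookkeeping is the only remaining routine point.
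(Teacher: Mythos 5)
Your setup is right and matches the paper's: induct on $n$, extend the model over the subcomplex $\mathbb{C}P^{n-1}$, and reduce everything to determining $\partial a_n$, i.e.\ the image of the attaching map in $H_{2n-2}(\Omega\,\mathbb{C}P^{n-1})$; your check that $v=\sum_{i+j=n}a_i\otimes a_j$ is a cycle is also correct and is used in the paper. But the one step that actually carries the theorem --- showing that this cycle is, up to sign and up to a boundary, the value of $\widetilde\partial a_n$ in some legitimately constructed model --- is exactly the step you leave as a sketch. The spectral-sequence route (``should identify $H(\Omega h)(\beta)$ with the class of $\omega_n$'') is not carried out, and the formality route does not close the gap either: knowing that $\cobar C_*(\mathbb{C}P^n)$ and $(T(a_1,\ldots,a_n),\partial)$ are quasi-isomorphic (or dga-homotopy equivalent to an Adams--Hilton model) does not make the latter an Adams--Hilton model in the sense used here, where the differential on $a_n$ must equal $-\mathbf{AH}(h)(z_0)$ modulo the precise indeterminacy of Proposition~\ref{prop1_AH}, namely addition of $\partial_{X^{2n-1}}(x)$. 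You would still have to show that $v$ and $\widetilde\partial a_n$ differ by such a boundary.

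The paper closes this gap without ever computing $H(\Omega h)(\beta)$. It first takes \emph{any} extension $(T(a_1,\ldots,a_n),\widetilde\partial)$ of the inductively given model, with $\widetilde\partial a_n=w$ unknown. Since $\Omega\,\mathbb{C}P^n\simeq \Omega S^{2n+1}\times S^1$, one has $H_{2n-2}(\Omega\,\mathbb{C}P^n)=0$, so the cycle $v$ must be a boundary in the full model: $v=k\,\widetilde\partial a_n+\partial x$ with $x\in T(a_1,\ldots,a_{n-1})$. Because the differential on $T(a_1,\ldots,a_{n-1})$ raises tensor length by one, no length-two monomial $a_ia_{n-i}$ can occur in $\partial x$, so all of $v$ must come from $k\,\widetilde\partial a_n$, forcing $k=\pm1$; after reorienting the top cell, $\widetilde\partial a_n=v+\partial x$, and Proposition~\ref{prop1_AH} then permits replacing $\widetilde\partial a_n$ by $v$. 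If you want to salvage your write-up, this vanishing-plus-tensor-length argument is the missing ingredient; as written, your proof identifies the correct candidate but does not prove it is attained.
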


\begin{proof}
Suppose we have already constructed Adams--Hilton models $\mathbf{AH}(\mathbb{C}P^i)$ of the required form and the model
$\mathbf{AH}(\mathbb{C}P^j)$ extends the model
$\mathbf{AH}(\mathbb{C}P^i)$, for $i<j\leq n-1$.
We extend the model $\ah(\mathbb{C}P^{n-1})=(T(a_1,\ldots,a_{n-1},\partial)$ of the form \eqref{equat_AH_CPn} to some Adams--Hilton model $\ah(\mathbb{C}P^{n})=(T(a_1,\ldots,a_{n-1},a_n),\widetilde\partial)$. We have
$$ 
\widetilde\partial a_i=\partial a_i= a_1 \otimes a_{i-1} + a_2 \otimes a_{i-2} + \cdots +
 a_{i-1} \otimes a_1, \,\,i = 2, \ldots, n-1, \qquad
\widetilde{\partial} a_n = w,
$$
where $w$ is some element of degree $2n-2$. Let
$$
v:= a_1 \otimes a_{n-1} + a_2 \otimes a_{n-2} + \cdots + a_{n-1} \otimes a_1, \quad  |v| = 2n-2.
$$
We need to prove that there is an Adams--Hilton model
$\mathbf{AH}(\mathbb{C}P^n)$ with $\partial a_n = v.$
By direct computation we obtain that $\partial v = 0$ (or we can use the fact that $v$ is a cycle in the cobar construction $\cobar H_{*}(\cp)$).
Using the Hopf fibration, we get
$\Omega \mathbb{C}P^n \simeq \Omega S^{2n+1} \times S^1$, hence,
$H_{2n-2}(\Omega \mathbb{C}P^n) = 0.$ 
Therefore, $v$ is a boundary.
It follows that
$$v = k \widetilde{\partial} a_n + \partial x \quad\text{for some} \quad k \in \mathbb{Z}, \,\,
x \in T(a_1, \ldots, a_{n-1}).
$$
The differential on $T(a_1, \ldots, a_{n-1})$ increases the tensor length by one,
hence, any summand of the form  $a_i a_{n-i}$ can not appear in $\partial x$.
It follows that each summand of $v$ appears in
$k \widetilde{\partial} a_n$. 
Therefore, $ k = \pm 1.$ Then
$$
\widetilde{\partial} a_n = \pm v + \partial x, \,\,
x \in T(a_1, \ldots, a_{n-1}). 
$$
Reorienting the cell corresponding to $a_n$ if necessary, we obtain
$
\widetilde{\partial} a_n =  v + \partial x, \,\,
x \in T(a_1, \ldots, a_{n-1}). 
$
Using Proposition~\ref{prop1_AH}, we obtain that there is another model $(\ah(\C P^n),\partial)$ with 
$\partial a_n = v$ and an appropriate choice of $\theta(a_n)$, extending $(\ah(\C P^{n-1}),\partial)$.
\end{proof}

Since Adams--Hilton models respect colimits, we obtain
\begin{corollary}
An Adams--Hilton model for $\cp$  is given by
$$
  \mathbf{AH}(\cp) = ( T(a_1, a_2, \ldots), \partial), \quad |a_i| =2 i-1,
$$
$$
  \partial a_1 = 0, 
  %\quad \partial a_2 = a_1^2,
  \quad
  \partial a_i= a_1 \otimes a_{i-1} + a_2 \otimes  
  a_{i-2} + \cdots +
  a_{i-1} \otimes a_1, \quad i \geq 2,
$$
where the generator $a_i$ corresponds to the $2i$-cell in $\cp$.  
\end{corollary}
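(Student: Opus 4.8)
The plan is to realise $\cp$ as the union (colimit) of its skeleta $\C P^n$ and to transport the models of Theorem~\ref{theor_AH_cpn} through this colimit by means of property~\eqref{itm:7} of Theorem~\ref{th_anick}. Recall that $\C P^n$ is the $2n$-skeleton of $\cp$, so that $\cp = \bigcup_{n} \C P^n$ with each $\C P^{n-1}$ a subcomplex of $\C P^n$; the $2i$-cell contributes the generator $a_i$ of degree $|a_i| = 2i-1$.

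First I would invoke the inductive construction carried out in the proof of Theorem~\ref{theor_AH_cpn}, which produces the models $\mathbf{AH}(\C P^n) = (T(a_1, \ldots, a_n), \partial)$ in such a way that $\mathbf{AH}(\C P^n)$ \emph{extends} $\mathbf{AH}(\C P^{n-1})$: both the differential $\partial$ and the comparison map $\theta_{\C P^n}$ restrict to those of the previous stage. Consequently the family $\{\C P^n\}_{n \geq 1}$ of subcomplexes, which is totally ordered by inclusion and covers $\cp$, automatically satisfies the coherency conditions required in property~\eqref{itm:7}: for $i \leq j$ one has $\C P^i \cap \C P^j = \C P^i$, and the restrictions of $\partial_{\C P^j}$ and $\theta_{\C P^j}$ to $AH(\C P^i)$ coincide with $\partial_{\C P^i}$ and $\theta_{\C P^i}$ by construction.

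Then property~\eqref{itm:7} of Theorem~\ref{th_anick} gives that $\colim_n (\mathbf{AH}(\C P^n), \theta_{\C P^n})$ is an Adams--Hilton model for $\cp$, and it remains only to identify this colimit explicitly. The underlying algebras $T(a_1, \ldots, a_n)$ embed in one another via the generator inclusions, so their colimit is the free associative algebra $T(a_1, a_2, \ldots)$ on countably many generators; the colimit differential is the common extension of the $\partial_{\C P^n}$, which is precisely $\partial a_1 = 0$ together with $\partial a_i = \sum_{k=1}^{i-1} a_k \otimes a_{i-k}$ for $i \geq 2$. This yields the asserted model. I anticipate no genuine obstacle: the only subtle point is that Theorem~\ref{theor_AH_cpn} must have been proved with the nested (coherent) choice of differentials, and indeed its proof explicitly extends the model of $\C P^{n-1}$ to that of $\C P^n$, which is exactly the compatibility that makes the colimit argument go through.
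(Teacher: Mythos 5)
Your proposal is correct and follows exactly the paper's route: the paper derives this corollary in one line ("Since Adams--Hilton models respect colimits") from the fact that the models $\mathbf{AH}(\mathbb{C}P^n)$ of Theorem~\ref{theor_AH_cpn} are constructed so as to extend one another, which is precisely the coherency needed for Theorem~\ref{th_anick}~\eqref{itm:7}. You have simply made the implicit colimit argument explicit, with no difference in substance.
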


%\begin{corollary}
%The Adams--Hilton model $\mathbf{AH}(\cp)$ 
%constructed above coincides with the cobar construction
%$\cobar H_{*} (\cp)$.
%\end{corollary}

The sphere $S^n$ has the standard two-cell decomposition. The decomposition of the product $S^{n_1} \times \cdots \times S^{n_k}$ has cells of the form
$e_J = \prod\limits_{j_t \in J} e_{j_t}$, 
$J \subset \{1, \ldots, k\}$.
We denote by $b_J$ the generator corresponding to the cell~$e_J$.

\begin{theorem}[{\cite[Th.~4.3]{AH}}] \label{theor_AH_sphe}
An Adams--Hilton model 
for $X = S^{n_1} \times \cdots \times S^{n_k}$
is given~by
$$
\ah(S^{n_1} \times \cdots \times S^{n_k}) = 
(T(U), \partial), \quad
U = \langle b_J, \, J \subseteq \{1, \ldots, k\}, \,\, J \neq  \varnothing\rangle, 
$$
$$
|b_J| =  |e_J| -1 = n_{j_1} + \cdots + n_{j_s} - 1,$$
\begin{equation} \label{eq_AH_sphe1}
\partial b_J = \sum \limits_{ (I, L),  I \sqcup L = J}
(-1)^{\widetilde{\varepsilon}(I,L)} b_I b_L, 
\end{equation}
$$
\widetilde{\varepsilon}(I,L) = \sum\limits_{i \in I} {n_i} + 
\sum\limits_{i \in I, \,\,l \in L, \,\,i>l } n_i \, n_l.
$$
Moreover, this model extends the Adams--Hilton models over each
product of $\leq k - 1$ spheres.
Namely, if $K = S^{n_{j_1}} \times \cdots \times S^{n_{j_s}}$, $K \subseteq S^{n_1} \times \cdots \times S^{n_k}$, then
$$ \partial_{X} |_K = \partial_K, \,\, \theta_{X} |_K = \theta_K.$$
\end{theorem}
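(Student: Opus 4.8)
The plan is to argue by induction on the number $k$ of factors, using the product properties of Theorem~\ref{th_anick}. For $k=1$ the space is a single sphere $S^{n_1}$, whose Adams--Hilton model is $(T(b_{\{1\}}),0)$ with $|b_{\{1\}}|=n_1-1$; this matches the stated model, since $J=\{1\}$ is the only index set and the sum in \eqref{eq_AH_sphe1} is empty. For the inductive step I would write $S^{n_1}\times\cdots\times S^{n_k}=X\times Y$ with $X=S^{n_1}\times\cdots\times S^{n_{k-1}}$ and $Y=S^{n_k}$, and assume the model $(T(U_X),\partial_X)$ for $X$ is already of the stated form. The cells of $X\times Y$ are the products $e_J=e_{J\cap[k-1]}\times e_{J\cap\{k\}}$, so the generating set splits as $U=U_X\sqcup\{b_{\{k\}}\}\sqcup\{b_{J\cup\{k\}}\mid \varnothing\neq J\subseteq[k-1]\}$, matching $S''=S\cup S'\cup(S\times S')$ of Theorem~\ref{th_anick}.

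Next I would use the extension property \eqref{itm:4} of Theorem~\ref{th_anick} for the subcomplexes $X\times pt$ and $pt\times Y$ to assume $\partial b_J=\partial_X b_J$ for $J\subseteq[k-1]$ and $\partial b_{\{k\}}=0$, and then invoke Theorem~\ref{th_anick}(10) to extend these to a model for which the ring map $\nu_{XY}\colon \ah(X\times Y)\to\ah(X)\otimes\ah(Y)$ is a dga-morphism. Because $\nu_{XY}(b_{J\cup\{k\}})=0$, the identity $\nu_{XY}\partial=(\partial_X\otimes 1\pm 1\otimes\partial_Y)\nu_{XY}$ forces $\partial b_{J\cup\{k\}}\in\ker\nu_{XY}$. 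To pin these differentials down I would proceed exactly as in the proof of Theorem~\ref{theor_AH_cpn}: the element given by \eqref{eq_AH_sphe1} is a cycle lying in $\ker\nu_{XY}$, and it is homologous to the differential produced by Theorem~\ref{th_anick}(10); hence, after possibly reorienting the cell $e_{J\cup\{k\}}$ to fix the overall sign, Proposition~\ref{prop1_AH} shows that \eqref{eq_AH_sphe1} is a legitimate Adams--Hilton differential. Once \eqref{eq_AH_sphe1} is known to define a model, the asserted restriction $\partial_X|_K=\partial_K$ is immediate, since $\partial b_J$ involves only the $b_I,b_L$ with $I,L\subseteq J$ and the sign $\widetilde{\varepsilon}(I,L)$ depends only on the dimensions indexed by $J$.

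Two verifications remain, and the sign bookkeeping in them is where I expect the real work to lie. First, one must check $\partial^2=0$; I would either carry out the combinatorial cancellation directly, grouping the terms of $\partial^2 b_J$ according to ordered partitions $I\sqcup I'\sqcup L=J$ and matching the Koszul signs, or simply observe that \eqref{eq_AH_sphe1} is identical to the cobar differential \eqref{cobardif1} of Proposition~\ref{prop_cobar_sph} (the sign $\varepsilon(I,L)(-1)^{|b_I|+1}$ there equals $(-1)^{\widetilde{\varepsilon}(I,L)}$, since $|b_I|+1=\sum_{i\in I}n_i$), so that $\partial$ squares to zero by the general properties of the cobar construction. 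Second, one must check that $\nu_{XY}$ really is a chain map for this explicit $\partial$. Since $\nu_{XY}$ annihilates every generator $b_I$ with $\{k\}\subsetneq I$, the only terms of $\partial b_{J\cup\{k\}}$ surviving under $\nu_{XY}$ are $b_J b_{\{k\}}$ and $b_{\{k\}}b_J$, coming from the ordered splits $(J,\{k\})$ and $(\{k\},J)$; writing $s=\sum_{i\in J}n_i$, these map to $(-1)^{s}\,b_J\otimes b_{\{k\}}$ and $-(-1)^{s}\,b_J\otimes b_{\{k\}}$ respectively, the Koszul sign of the tensor-product multiplication exactly cancelling the difference of the two $\widetilde{\varepsilon}$-signs, so $\nu_{XY}(\partial b_{J\cup\{k\}})=0$.

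Finally, once $\nu_{XY}$ is a dga-morphism and $\partial^2=0$, Theorem~\ref{th_anick}(10) yields the algebra isomorphisms $H(\ah(X\times Y))\cong H(\ah(X))\otimes H(\ah(Y))\cong H_{*}(\Omega X)\otimes H_{*}(\Omega Y)\cong H_{*}(\Omega(X\times Y))$, so the explicit dga $(T(U),\partial)$ of \eqref{eq_AH_sphe1} is indeed an Adams--Hilton model for $X\times Y$, and the compatible choices of $\theta$ are supplied by the same inductive construction; this completes the induction. The main obstacle throughout is the uniform control of the Koszul signs $\widetilde{\varepsilon}(I,L)$: they must simultaneously make $\partial^2=0$, be compatible with restriction to subproducts, and intertwine correctly with the sign rule of the tensor product of dgas under $\nu_{XY}$.
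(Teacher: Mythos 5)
The paper does not actually prove this statement: it is quoted verbatim from Adams--Hilton \cite[Th.~4.3]{AH}, so there is no in-paper argument to compare against. Your reconstruction is nevertheless the natural one, and it closely mirrors the template the paper itself uses for the analogous results on $\mathbb{C}P^n\times\mathbb{C}P^k$ and $\mathbb{C}P^{k_1}\times\cdots\times\mathbb{C}P^{k_n}$ (Theorem~\ref{th_AH_prod_cpncpk} and its sequel): get the model on a skeleton by coherently gluing smaller pieces, use Theorem~\ref{th_anick}(10) to make $\nu$ a dga-morphism, show the candidate differential is a cycle killed by the quasi-isomorphism $\nu$, hence a boundary, and then normalise via Proposition~\ref{prop1_AH} after reorienting a cell. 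Your sign checks are correct: $(-1)^{\widetilde{\varepsilon}(I,L)}=\varepsilon(I,L)(-1)^{|b_I|+1}$ (this is exactly Proposition~\ref{prop_diff_AHsph}, so $\partial^2=0$ follows from the cobar construction), and the two surviving terms of $\nu_{XY}(\partial b_{J\cup\{k\}})$ do cancel.

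The one genuine soft spot is your treatment of the intermediate mixed cells $b_{J\cup\{k\}}$ with $\varnothing\neq J\subsetneq[k-1]$. The step ``$x_J$ is a cycle in $\ker\nu_{XY}$, hence homologous to the differential produced by (10)'' writes $x_J=\sum_M k_M\widetilde{\partial}\,b_M+\partial y$ over \emph{all} new generators $M\ni k$ of the same degree, and unlike the $\mathbb{C}P$ case (where multiset-preservation isolates a single generator) or the top cell (unique in its dimension), several such $M$ can occur in one degree, e.g.\ $b_{14}$ and $b_{24}$ in $(S^2)^{\times 4}$; the tensor-length argument alone does not force $k_M=\pm\delta_{M,J\cup\{k\}}$. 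The clean repair, which your own inductive hypothesis already supplies, is to invoke Theorem~\ref{th_anick}~\eqref{itm:7} for the family of \emph{all} proper subproducts $\prod_{j\in M}S^{n_j}$, $M\subsetneq[k]$ (each a product of $\le k-1$ spheres, hence covered by induction, and mutually coherent since their pairwise intersections are again subproducts). Their union is the fat wedge, i.e.\ the complement of the single top cell, so only $b_{[k]}$ remains to be treated by the $\nu$-argument, where uniqueness of the top-dimensional generator makes your ``homologous'' step airtight. This restructuring also makes the final extension claim $\partial_X|_K=\partial_K$, $\theta_X|_K=\theta_K$ part of the construction rather than an afterthought.
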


\begin{proposition} \label{prop_diff_AHsph}
The differential $\partial$ from Theorem~\ref{theor_AH_sphe} can be expressed in the following way:
\begin{equation} \label{eq_AH_sphe2}
\partial b_{j_1 \cdots j_s} =
\sum_{p=1}^{s-1} \sum_{\theta \in S(p, s-p)} \varepsilon(\theta)
(-1)^{|b_{j_{\theta(1)} \cdots j_{\theta(p)}}|+1 }
b_{j_{\theta(1)} \cdots j_{\theta(p)}}\otimes b_{j_{\theta(p+1)} \cdots j_{\theta(s)}} 
\end{equation} 
\begin{equation}
= \sum_{p=1}^{s-1} \sum_{\theta \in \widetilde{S}(p, s-p)} \varepsilon(\theta)
(-1)^{|b_{j_{\theta(1)} \cdots j_{\theta(p)}}|+1 }
\left[b_{j_{\theta(1)} \cdots j_{\theta(p)}}, b_{j_{\theta(p+1)} \cdots j_{\theta(s)}}\right], 
\end{equation}
where 
$1 \leq j_1 < \cdots < j_s \leq k$,
$S(p, s-p)$ denotes the set of shuffle permutations,
$\widetilde{S}(p, s-p)$ is the set of shuffle permutations such that $\theta(1)=1$, and $\varepsilon(\theta)$ is the Koszul
sign of the elements
$s b_{i_{\theta(1)}}, \ldots ,s b_{i_{\theta(s)}}.$
\end{proposition}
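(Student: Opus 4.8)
The plan is to treat this as a purely combinatorial comparison of signs, since both lines of \eqref{eq_AH_sphe2} are just rewritings of the differential \eqref{eq_AH_sphe1}. First I would record the indexing bijection. For $J=\{j_1,\dots,j_s\}$ with $j_1<\cdots<j_s$, an ordered partition $I\sqcup L=J$ with $|I|=p$ and both blocks nonempty is the same datum as a shuffle permutation $\theta\in S(p,s-p)$, via $I=\{j_{\theta(1)},\dots,j_{\theta(p)}\}$ and $L=\{j_{\theta(p+1)},\dots,j_{\theta(s)}\}$ (using that $\theta$ is increasing on each block). Hence the single sum over $(I,L)$ in \eqref{eq_AH_sphe1} is exactly the double sum over $p=1,\dots,s-1$ and $\theta\in S(p,s-p)$ appearing in \eqref{eq_AH_sphe2}.

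Next I would verify the termwise sign identity
$$
(-1)^{\widetilde{\varepsilon}(I,L)}=\varepsilon(\theta)\,(-1)^{|b_I|+1}.
$$
Writing $|a_I|:=\sum_{i\in I}n_i$, we have $|b_I|+1=|a_I|$, so $(-1)^{|b_I|+1}=(-1)^{\sum_{i\in I}n_i}$, which is precisely the first summand of $\widetilde{\varepsilon}(I,L)$. For the remaining factor, $\varepsilon(\theta)$ is the Koszul sign incurred in reordering $sb_{j_1},\dots,sb_{j_s}$ (of degrees $n_{j_1},\dots,n_{j_s}$) into $sb_{j_{\theta(1)}},\dots,sb_{j_{\theta(s)}}$; since a shuffle produces exactly the inversions coming from pairs $i\in I$, $l\in L$ with $l<i$, one gets $\varepsilon(\theta)=(-1)^{\sum_{i\in I,\,l\in L,\,i>l}n_i n_l}$, which is the second summand of $\widetilde{\varepsilon}(I,L)$. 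This proves the first equality in \eqref{eq_AH_sphe2}.

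For the commutator form I would group the ordered partitions $(I,L)$ and $(L,I)$ together and restrict the outer index set to shuffles with $\theta(1)=1$, i.e. to $\widetilde{S}(p,s-p)$; this singles out the representative with $j_1\in I$. The crucial observation is that the two Koszul signs are related by
$$
\varepsilon(\theta_{LI})=\varepsilon(\theta_{IL})\,(-1)^{|a_I|\,|a_L|},
$$
because $\sum_{i\in I,\,l\in L,\,i>l}n_in_l+\sum_{i\in I,\,l\in L,\,i<l}n_in_l=|a_I|\,|a_L|$. Substituting $|b_I|=|a_I|-1$ and $|b_L|=|a_L|-1$ and comparing exponents modulo $2$, the $(L,I)$ term equals $-\varepsilon(\theta_{IL})(-1)^{|b_I|+1}(-1)^{|b_I||b_L|}\,b_L\otimes b_I$, so together with the $(I,L)$ term it assembles into $\varepsilon(\theta_{IL})(-1)^{|b_I|+1}\,[b_I,b_L]$. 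Summing over $\widetilde{S}(p,s-p)$ then yields the second (commutator) line of \eqref{eq_AH_sphe2}.

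The main obstacle is exactly this last sign computation: the suspension shift enters asymmetrically in the two paired terms (through $|b_I|+1$ versus $|b_L|+1$), and one must check that after accounting for $\varepsilon(\theta_{LI})/\varepsilon(\theta_{IL})=(-1)^{|a_I||a_L|}$ the leftover exponent $|a_I||a_L|+|b_L|-|b_I|$ differs from $1+|b_I||b_L|$ by an even integer, so that precisely the graded commutator sign $-(-1)^{|b_I||b_L|}$ is produced. Everything else is the bookkeeping of the bijection above.
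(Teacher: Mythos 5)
Your proposal is correct and follows essentially the same route as the paper: identifying ordered partitions $(I,L)$ with shuffles and checking that $\widetilde{\varepsilon}(I,L)=\bigl(|b_I|+1\bigr)+\sum_{i\in I,\,l\in L,\,i>l}n_in_l$ splits into the factor $(-1)^{|b_I|+1}$ and the Koszul sign $\varepsilon(\theta)$. You additionally spell out the pairing of $(I,L)$ with $(L,I)$ and the sign check $uv+v\equiv 1+u+(u-1)(v-1)\pmod 2$ needed for the commutator form, which the paper's proof leaves implicit; that computation is correct.
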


\begin{proof}
We prove that \eqref{eq_AH_sphe1} and \eqref{eq_AH_sphe2} are equivalent. 
Suppose $J = \{j_1, \ldots, j_s\} \subset \{ 1, \ldots, k\}, \,\,
I = \{j_{\theta(1)} \cdots j_{\theta(p)}\}, \,\,
L = \{j_{\theta(p+1)} \cdots j_{\theta(s)}\}$. It suffices to prove that the signs coincide.
First,
$$
\sum\limits_{i \in I} {n_i} = |e_I| = |b_{j_{\theta(1)} \cdots j_{\theta(p)}}|+1.
$$
Second, we have the Koszul sign $\varepsilon(\theta)$ of the elements
$s b_i, \,\,|s b_i| = n_i,  \,\,|b_i| = n_i -1,$
$$s b_{j_1} \wedge  \cdots \wedge  s b_{j_s} = \varepsilon(\theta) 
s b_{j_{\theta(1)}}  \wedge   \cdots \wedge  s b_{j_{\theta(s)}}.$$
Since $\theta$ is a shuffle permutation, it follows that
\[
\varepsilon(\theta) = \sgn \bigl({\sum\limits_{i \in I, \,\,l \in L, \,\,i>l } n_i \, n_l}\bigr).\qedhere
\]
\end{proof}

%\begin{theorem} \label{th_model_sphere}
%The Adams--Hilton model
%$\mathbf{AH}(S^{n_1} \times \cdots \times S^{n_k})$
%from Theorem~\ref{theor_AH_sphe} coincides with the cobar %construction
%$\cobar H_{*}(S^{n_1} \times \cdots \times S^{n_k})$.
%\end{theorem}

%\begin{proof}
%The corresponding tensor algebras have the same generator set, the %formulas
%for the differential are equal, compare Propositions %\ref{prop_cobar_sph} 
%and \ref{prop_diff_AHsph}.
%\end{proof}

\begin{theorem} \label{ther_AH_sph_colim}

An Adams--Hilton model  for $\sk$ is given by
$$
\mathbf{AH}(\sk) =
(T(U), \partial), \quad
U = \langle b_J, \, J \in \K, \,\, J \neq \varnothing \rangle,
\quad
|b_J| = n_{j_1} + \cdots + n_{j_s} - 1,
$$
\begin{equation} \label{eq_AH_sk}
\partial b_{j_1 \cdots j_s} =
\sum_{p=1}^{s-1} \sum_{\theta \in S(p, s-p)} \varepsilon(\theta)
(-1)^{|b_{j_{\theta(1)} \cdots j_{\theta(p)}}|+1 }
b_{j_{\theta(1)} \cdots j_{\theta(p)}}\otimes b_{j_{\theta(p+1)} \cdots j_{\theta(s)}}.
\end{equation} 
\end{theorem}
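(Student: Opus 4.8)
The plan is to realise $\ah(\sk)$ as a colimit of the Adams--Hilton models for products of spheres supplied by Theorem~\ref{theor_AH_sphe}, invoking the colimit property~\eqref{itm:7} of Theorem~\ref{th_anick}; the very name of the statement signals that this is the intended route. The starting point is the polyhedral-product decomposition $\sk = \bigcup_{I \in \K} (\underline{S})^I$, where for each simplex $I = \{i_1, \ldots, i_r\} \in \K$ the factor $(\underline{S})^I \cong S^{n_{i_1}} \times \cdots \times S^{n_{i_r}}$ is a product of spheres. With the product cell decomposition of $S^{n_1} \times \cdots \times S^{n_m}$ (cells $e_J$, $J \subseteq [m]$), each $(\underline{S})^I$ and $\sk$ itself are subcomplexes, and the generators $b_J$ of the candidate model correspond precisely to the cells $e_J$ with $J \in \K$, $J \neq \varnothing$.

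First I would record the intersection formula $(\underline{S})^I \cap (\underline{S})^{I'} = (\underline{S})^{I \cap I'}$, immediate from the definition of the polyhedral product, which shows that the intersection of two factors is again a product of spheres indexed by $I \cap I' \in \K$. Next, to obtain globally coherent models, I would fix the single model $\ah(S^{n_1} \times \cdots \times S^{n_m})$ given by Theorem~\ref{theor_AH_sphe} and define $\ah((\underline{S})^I)$, for every $I \in \K$, as its restriction to the sub-dga on the generators $\{b_J : J \subseteq I\}$. This is legitimate because the differential \eqref{eq_AH_sphe1} of $b_J$ involves only generators $b_M$ with $M \subseteq J$, so the formula is internal to each factor; by the ``extends over sub-products'' clause of Theorem~\ref{theor_AH_sphe}, this restriction is indeed the Adams--Hilton model of $(\underline{S})^I$, for both $\partial$ and $\theta$.

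With these choices the coherency conditions of~\eqref{itm:7} hold automatically: for any $I, I' \in \K$ both $\ah((\underline{S})^I)$ and $\ah((\underline{S})^{I'})$ restrict, on $\ah((\underline{S})^{I\cap I'})$, to one and the same restriction of the full-product model, so $\partial$ and $\theta$ agree there. Applying~\eqref{itm:7} then exhibits $\colim_{I \in \K}\{\ah((\underline{S})^I), \theta_{(\underline{S})^I}\}$ as an Adams--Hilton model for $\sk$. Finally I would identify this colimit: since $\K$ is closed under taking subsets, the generating modules satisfy $\bigcup_{I \in \K} \langle b_J : J \subseteq I\rangle = \langle b_J : J \in \K\rangle = U$, and because the free-algebra functor $T(-)$ preserves colimits, the colimit in DGA is $(T(U), \partial)$ with differential the common formula \eqref{eq_AH_sphe2} restricted to $J \in \K$. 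This is exactly \eqref{eq_AH_sk}, completing the identification.

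The main obstacle is the coherence bookkeeping in the middle step: one must guarantee that the independently obtained models for the various factors $(\underline{S})^I$ literally agree, not merely up to homotopy, on all pairwise intersections, as~\eqref{itm:7} demands. Deriving every factor model by restriction from a single model on the ambient product $S^{n_1} \times \cdots \times S^{n_m}$ is precisely what makes this work and is the crux of the argument; the remaining verification that the colimit of the tensor algebras equals $T(U)$ with the stated differential is then purely formal.
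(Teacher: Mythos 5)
Your proposal is correct and follows essentially the same route as the paper: decompose $\sk$ as $\colim_{I\in\K}(\underline{S})^I$, use the ``extends over subproducts'' clause of Theorem~\ref{theor_AH_sphe} to secure the coherency conditions of Theorem~\ref{th_anick}~\eqref{itm:7}, and identify the resulting colimit of models with $(T(U),\partial)$. Your device of obtaining all factor models by restriction from the single ambient model on $S^{n_1}\times\cdots\times S^{n_m}$ is just a slightly more explicit way of phrasing the same coherence argument the paper uses.
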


\begin{proof}
The polyhedral product $\sk$ is the colimit of products of spheres
$$
\sk = \colim_{J \in \K} (\underline{S})^{J}.
$$

Since Adams--Hilton models from Theorem~\ref{theor_AH_sphe} extend each
other over subproducts, they satisfy the coherency conditions from
Theorem~\ref{th_anick}~\eqref{itm:7}.
For example, the Adams--Hilton model 
$\mathbf{AH}(S^{n_2} \times S^{n_3})$ extends to
$\mathbf{AH}(S^{n_1} \times S^{n_2} \times S^{n_3})$ and
$\mathbf{AH}(S^{n_2} \times S^{n_3} \times S^{n_4})$. 
Therefore, the Adams--Hilton model
$\mathbf{AH}(S^{n_1} \times S^{n_2} \times S^{n_3} \cup_{S^{n_2} \times S^{n_3}} 
S^{n_2} \times S^{n_3} \times S^{n_4})$ is well defined.
The same holds for the arbitrary products of spheres. 
Thus we obtain
\[
\mathbf{AH}(\sk) = \colim_{J \in \K} \mathbf{AH}((\underline{S})^{J}).\qedhere
\]
\end{proof}

\begin{corollary} \label{cor_s2k}
An Adams--Hilton model  for $\stwok$  is given by
$$\mathbf{AH}((S^2)^{\K}) = (T(U), \partial), \quad
U = \langle b_J, \, J \in \K, \,\, J \neq \varnothing \rangle, 
$$
$$
\partial b_J = 
\sum\limits_{(I,L), I\sqcup L = J}
 b_I \otimes b_L,
 \quad 
 |b_J| = 2|J| -1.
$$
\end{corollary}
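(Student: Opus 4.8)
The plan is to specialize Theorem~\ref{ther_AH_sph_colim} to the case $n_i = 2$ for all $i$ and to verify that every sign appearing in the differential~\eqref{eq_AH_sk} collapses to $+1$. First I would settle the degree: since each $n_i = 2$, the formula $|b_J| = n_{j_1} + \cdots + n_{j_s} - 1$ becomes $|b_J| = 2s - 1 = 2|J| - 1$, exactly as asserted.

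Next I would examine the two sign factors in~\eqref{eq_AH_sk}. The Koszul sign $\varepsilon(\theta)$ is the sign produced by the shuffle permutation $\theta$ acting on the suspended generators $s b_{j_1}, \ldots, s b_{j_s}$, each of which has degree $|s b_i| = n_i = 2$. By Proposition~\ref{prop_diff_AHsph} we have $\varepsilon(\theta) = \sgn\bigl(\sum_{i \in I,\, l \in L,\, i > l} n_i n_l\bigr)$; with $n_i = 2$ each summand equals $4$, so the exponent is even and $\varepsilon(\theta) = +1$. For the remaining factor, $|b_I| = 2|I| - 1$ yields $|b_I| + 1 = 2|I|$, whence $(-1)^{|b_I|+1} = +1$ as well. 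Both signs are therefore trivial, and the differential reduces to $\partial b_{j_1 \cdots j_s} = \sum_{p=1}^{s-1} \sum_{\theta \in S(p, s-p)} b_{j_{\theta(1)} \cdots j_{\theta(p)}} \otimes b_{j_{\theta(p+1)} \cdots j_{\theta(s)}}$.

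The final step is a purely combinatorial reindexing: ranging over all $p$ from $1$ to $s-1$ and all shuffle permutations $\theta \in S(p, s-p)$ is the same as ranging over all ordered pairs $(I, L)$ of nonempty subsets with $I \sqcup L = J$. Indeed, each shuffle $\theta$ determines the increasingly ordered pair $(I, L) = (\{j_{\theta(1)}, \ldots, j_{\theta(p)}\},\, \{j_{\theta(p+1)}, \ldots, j_{\theta(s)}\})$, and this assignment is a bijection onto such ordered pairs. This gives $\partial b_J = \sum_{(I,L),\, I \sqcup L = J} b_I \otimes b_L$, completing the identification.

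I expect no genuine obstacle here: the entire content is the sign bookkeeping, and the computation mirrors the even-dimensional simplification already recorded in the Remark following Proposition~\ref{prop_cobar_sph}, where the cobar differential was seen to lose its signs for even-dimensional spheres. The corollary is thus the Adams--Hilton counterpart of that observation, consistent with the coincidence $\ah(\stwok) \cong \cobar H_{*}(\stwok)$ anticipated by Theorem~\ref{th_AH_cobar_coincide}.
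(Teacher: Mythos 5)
Your proposal is correct and matches the paper's (implicit) argument: the corollary is stated without proof as the specialization of Theorem~\ref{ther_AH_sph_colim} to $n_i=2$, and your verification that $\varepsilon(\theta)=+1$ and $(-1)^{|b_I|+1}=+1$ together with the reindexing of shuffles by ordered pairs $(I,L)$ is exactly the bookkeeping that justifies it.
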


\begin{theorem} \label{th_AH_prod_cpncpk}

An Adams--Hilton model for $\mathbb{C}P^n \times \mathbb{C}P^k$ is given by
$$\mathbf{AH}(\mathbb{C}P^n \times \mathbb{C}P^k) = 
\bigl( T(\chi_\sigma\mid \sigma = 
\{\underbrace{1,\ldots,1}_{\leq n},\underbrace{2,\ldots,2}_{\leq k}\}, \,
\sigma \neq \varnothing
), 
\partial \bigr), $$
\begin{equation} \label{eq_diff_cpkcpn}
\partial \chi_\sigma =  \sum \limits_{\sigma = \tau  \sqcup \tau'}
\chi_\tau \chi_{\tau'}, \quad
|\chi_\sigma| =2 |\sigma| -1.
\end{equation}
Moreover, this model extends the models 
$\mathbf{AH}(\mathbb{C}P^s \times \mathbb{C}P^t)$,  $0 \leq s \leq n, \,
0 \leq t \leq k.$ It also extends the model
$\mathbf{AH}(S^2 \times S^2)$ from Theorem~\ref{theor_AH_sphe}.

\end{theorem}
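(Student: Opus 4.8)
The plan is to realize the proposed algebra as the Adams--Hilton model assembled from the two factors via property~(10) of Theorem~\ref{th_anick}, and then to certify its differential by producing the structure map $\theta$ directly. First I fix the factor models $\mathbf{AH}(\mathbb{C}P^n)$ and $\mathbf{AH}(\mathbb{C}P^k)$ of Theorem~\ref{theor_AH_cpn}; property~(10) then supplies a model for the product whose underlying algebra is $T(\chi_\sigma)$, the generators $\chi_\sigma$ corresponding to the product cells $e_\sigma$ (equivalently, to multisets $\sigma$ with at most $n$ ones and $k$ twos), and for which the projection $\nu_{XY}\colon AH(\mathbb{C}P^n\times\mathbb{C}P^k)\to AH(\mathbb{C}P^n)\otimes AH(\mathbb{C}P^k)$ is a dga-morphism. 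I would begin by observing that $(T(\chi_\sigma),\partial)$ with the differential~\eqref{eq_diff_cpkcpn} is exactly the cobar construction $\cobar H_*(\mathbb{C}P^n\times\mathbb{C}P^k)$, computed as in~\eqref{cobar_cpk}--\eqref{diff_cpk} from the tensor-product coalgebra $H_*(\mathbb{C}P^n)\otimes H_*(\mathbb{C}P^k)$; in particular $\partial^2=0$ holds automatically.

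The next step is to check that this cobar differential makes $\nu_{XY}$ a dga-morphism. On a pure generator $\chi_{\{1^i\}}$ (resp.\ $\chi_{\{2^j\}}$) formula~\eqref{eq_diff_cpkcpn} restricts to the differential of Theorem~\ref{theor_AH_cpn} for the corresponding factor, so $\nu_{XY}\partial\chi_{\{1^i\}}=(\partial a_i)\otimes 1=\partial\,\nu_{XY}\chi_{\{1^i\}}$. On a mixed generator $\chi_\sigma$ one has $\nu_{XY}\chi_\sigma=0$, and the only summands of $\sum_{\sigma=\tau\sqcup\tau'}\chi_\tau\chi_{\tau'}$ surviving $\nu_{XY}$ come from splitting $\sigma$ into its pure--$1$ and pure--$2$ blocks in the two orders; since $|a_i|$ and $|a_j'|$ are odd, the Koszul sign in $AH(\mathbb{C}P^n)\otimes AH(\mathbb{C}P^k)$ produces $a_i\otimes a_j'$ and $-\,a_i\otimes a_j'$, which cancel, so $\nu_{XY}\partial\chi_\sigma=0=\partial\,\nu_{XY}\chi_\sigma$. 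Hence $\nu_{XY}$ is a chain map for~\eqref{eq_diff_cpkcpn}.

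To certify that this cobar dga really is an Adams--Hilton model I would exhibit its structure map as the composite
\[
\theta:=\lambda\circ(\theta_X\otimes\theta_Y)\circ\nu_{XY}\colon (T(\chi_\sigma),\partial)\longrightarrow CU_*\bigl(\Omega(\mathbb{C}P^n\times\mathbb{C}P^k)\bigr),
\]
where $\theta_X,\theta_Y$ are the structure maps of the factor models and $\lambda$ is the cubical Eilenberg--Zilber equivalence, a homotopy inverse of the Alexander--Whitney map in the bottom row of the diagram of property~(10). Each arrow is a chain map, so $\theta$ is one; it carries generators to cells with the correct degree shift; and it induces the Pontryagin ring structure because $\lambda$ is multiplicative up to the coherent homotopy recorded in property~(10). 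Thus it remains only to see that $\theta$ is a quasi-isomorphism, for which it suffices that each factor is: $\theta_X,\theta_Y$ are quasi-isomorphisms by definition, $\lambda$ is the Eilenberg--Zilber equivalence, and $\theta_X\otimes\theta_Y$ is a quasi-isomorphism by the Künneth theorem, the factor models being $\Z$-free.

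The substantial point --- and the main obstacle --- is that $\nu_{XY}$ is itself a quasi-isomorphism, and this is exactly where the $\Z$-coefficients must be treated with care. I would deduce it from formality: $\mathbb{C}P^n\times\mathbb{C}P^k$ is a product of formal (indeed $\Z$-formal) spaces with torsion-free homology, so, just as for $\cpk$ in Proposition~\ref{1}, the cobar construction of its homology coalgebra computes the loop homology, $H(\cobar H_*(\mathbb{C}P^n\times\mathbb{C}P^k))\cong H_*(\Omega(\mathbb{C}P^n\times\mathbb{C}P^k))$; on the other hand $H(\mathbf{AH}(\mathbb{C}P^n)\otimes\mathbf{AH}(\mathbb{C}P^k))\cong H_*(\Omega\mathbb{C}P^n)\otimes H_*(\Omega\mathbb{C}P^k)$ by Künneth, and the two sides coincide because $\Omega(\mathbb{C}P^n\times\mathbb{C}P^k)=\Omega\mathbb{C}P^n\times\Omega\mathbb{C}P^k$. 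Since $\nu_{XY}$ is precisely the natural comparison $\cobar(C\otimes D)\to\cobar C\otimes\cobar D$ realizing this identification, it is a quasi-isomorphism, and $\theta$ certifies the model. Finally, the two ``moreover'' assertions are formal: the generators $\chi_\sigma$ whose $\sigma$ carries at most $s$ ones and $t$ twos, together with~\eqref{eq_diff_cpkcpn}, form the sub-dga $\cobar H_*(\mathbb{C}P^s\times\mathbb{C}P^t)$, which is the model for the subcomplex by Theorem~\ref{th_anick}~\eqref{itm:4}--\eqref{itm:5}; the case $s=t=1$ recovers $\mathbf{AH}(S^2\times S^2)$, the relation $\partial\chi_{\{1,2\}}=\chi_1\chi_2+\chi_2\chi_1$ agreeing with~\eqref{eq_AH_sphe1} of Theorem~\ref{theor_AH_sphe}.
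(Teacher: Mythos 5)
Your route is genuinely different from the paper's: you try to exhibit the cobar dga together with an explicit structure map $\theta=\lambda\circ(\theta_X\otimes\theta_Y)\circ\nu_{XY}$ and certify it directly as an Adams--Hilton model, whereas the paper stays inside the Adams--Hilton framework throughout (it builds the $(2N-2)$-skeleton model by induction, invokes Theorem~\ref{th_anick}~(10) to extend it to \emph{some} model with $\nu$ a dga quasi-isomorphism, and then uses $\nu x=0$, $\partial x=0$, the tensor-length argument from Theorem~\ref{theor_AH_cpn} and Proposition~\ref{prop1_AH} to force $\partial\chi_{\widetilde\sigma}=x$). Unfortunately your shortcut has two genuine gaps. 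First, your argument that $\nu_{XY}$ is a quasi-isomorphism only shows that the two complexes have abstractly isomorphic homology (via $\Z$-formality on one side and K\"unneth on the other); it does not show that the \emph{map} $\nu_{XY}$ induces that isomorphism. The assertion that $\nu_{XY}$ ``is precisely the natural comparison $\cobar(C\otimes D)\to\cobar C\otimes\cobar D$ realizing this identification'' is where the whole weight of the proof sits, and the fact that this comparison map is a quasi-isomorphism is a nontrivial theorem that is neither in the paper's toolkit nor proved by you. The paper avoids this entirely: it obtains $\nu$ as a quasi-isomorphism for free from property~(10) applied to an honest Adams--Hilton model, and only needs the implication ``$\nu x=0$ and $x$ a cycle $\Rightarrow x$ a boundary.''

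Second, producing \emph{a} dga quasi-isomorphism $\theta$ to $CU_{*}(\Omega(\mathbb{C}P^n\times\mathbb{C}P^k))$ does not make the dga an Adams--Hilton model in the sense used in this paper: the model is defined by the skeleton-by-skeleton construction in which $\partial v_\alpha$ must represent $H(\Omega f_\alpha)\beta$ for the attaching map $f_\alpha$ (formula~\eqref{eq_def_AH_dif}) and $\theta$ must restrict to a quasi-isomorphism over every skeleton. Your $\theta$ annihilates every mixed generator and, because it factors through $\mathbf{AH}(\mathbb{C}P^{n})\otimes\mathbf{AH}(\mathbb{C}P^{k})$, its restriction to the subalgebra corresponding to the $2j$-skeleton lands in $CU_{*}(\Omega(\mathbb{C}P^{\min(j,n)}\times\mathbb{C}P^{\min(j,k)}))$ rather than in $CU_{*}(\Omega\,sk^{2j})$; so it is not of the required form, and you cannot afterwards invoke the extension, colimit and map properties of Theorem~\ref{th_anick} that the paper needs in Theorem~\ref{th_AH_cpk} and beyond. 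Relatedly, you never reconcile the two differentials in play: property~(10) gives a model with \emph{some} differential $\partial'$ making $\nu$ a dga-morphism, and you separately check that the cobar differential is a chain map for $\nu$, but these need not coincide. The paper closes exactly this gap by arranging the (10)-model to extend the skeletal cobar model and then adjusting the single top generator via Proposition~\ref{prop1_AH}. Your verifications that $\nu$ is a chain map for the cobar differential (including the sign cancellation on mixed generators) and the ``moreover'' restriction statements are correct and match the paper.
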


\begin{proof}
The proof is by induction on $N = n+k$. 
The theorem holds
for $N = 2$ since
the models $\mathbf{AH}(pt \times \mathbb{C}P^2)$, 
$\mathbf{AH}(\mathbb{C}P^2 \times pt)$ and $\mathbf{AH}(S^2 \times S^2)$ 
are already constructed and satisfy the coherency conditions on
trivial intersections.

Assume the theorem holds for $N-1$. 
%Построим модель
%$\mathbf{AH}(\mathbb{C}P^n \times \mathbb{C}P^k), n+k = N.$
By induction, we have already constructed 
$\mathbf{AH}(\mathbb{C}P^{n-1} \times \mathbb{C}P^k),$ 
$\mathbf{AH}(\mathbb{C}P^n \times \mathbb{C}P^{k-1}), 
$
which extend the model
$
\mathbf{AH}(\mathbb{C}P^{n-1} \times \mathbb{C}P^{k-1})$,
the maps $\partial$ and $\theta$ agree on the intersection. 
Thus, we obtain the model on the $(2N-2)$-skeleton
$$
  sk^{2N-2} (\mathbb{C}P^n \times \mathbb{C}P^k) =
  \mathbb{C}P^{n-1} \times \mathbb{C}P^k 
  \cup_{(\mathbb{C}P^{n-1} \times \mathbb{C}P^{k-1})}
\mathbb{C}P^n \times \mathbb{C}P^{k-1},
$$
with the differential satisfying~\eqref{eq_diff_cpkcpn}.

%To obtain some model for
%$\mathbb{C}P^n \times \mathbb{C}P^k$,
%we only need to extend the maps
%$\partial$ and  $\theta$
%over
%$AH(sk^{2N-2} (\mathbb{C}P^n \times \mathbb{C}P^k) )$
%and define $\partial(\chi_L)$, 
%$\theta(\chi_{L})$, 
%$L = 
%\{\underbrace{1,\ldots,1}_{n},\underbrace{2,\ldots,2}_{k}\}$.

By Theorem~\ref{th_anick}~(10), we may choose the maps 
$\partial$ and $\theta$
on $AH(\mathbb{C}P^n \times \mathbb{C}P^k)$ 
inductively in such a way that
the ring homomorphism
$$
\nu = \nu_{\mathbb{C}P^n \mathbb{C}P^k}\colon \mathbf{AH}(\mathbb{C}P^n \times \mathbb{C}P^k)
\rightarrow
\mathbf{AH}(\mathbb{C}P^n) \otimes
\mathbf{AH}(\mathbb{C}P^k)
$$
is a chain equivalence.
We need to prove that model
$\mathbf{AH}(sk^{2N-2} (\mathbb{C}P^n \times \mathbb{C}P^k) )$ constructed above
with differential~\eqref{eq_diff_cpkcpn} satisfies
$\partial \nu = \nu \partial$.
We have
$$
\nu(\chi_{1 \cdots 1}) = \chi_{1 \cdots 1} \otimes 1, \quad
\nu(\chi_{2 \cdots 2}) = 1 \otimes \chi_{2 \cdots 2} , \quad
\nu(\chi_{1 \cdots 1 2 \cdots 2}) =0.
$$
Hence,
\[
  \partial \nu (\chi_{1\cdots 1}) = \partial (\chi_{1\cdots 1} \otimes 1) = 
\partial (\chi_{1\cdots 1}) \otimes 1 = \nu \partial  (\chi_{1\cdots 1})
\]
and similarly for $\chi_{2\cdots 2}$. If $\sigma$ contains both $1$ and $2$, then  
$$
\nu \partial (\chi_\sigma) = \nu (\sum \chi_\tau \chi_{\tau'}) =
\nu (\chi_{1 \cdots 1} \chi_{2 \cdots 2} +
\chi_{2 \cdots 2} \chi_{1 \cdots 1}) = 0 = \partial \nu (\chi_\sigma).
$$
By Theorem~\ref{th_anick}~(10), there is an Adams--Hilton model
$(AH(\mathbb{C}P^n \times \mathbb{C}P^k),\partial)$ extending the model $\mathbf{AH}(sk^{2N-2} (\mathbb{C}P^n \times \mathbb{C}P^k) )$
and satisfying $\nu \partial = \partial \nu$. 

Consider the generator $\chi_{\widetilde{\sigma}}$,
${\widetilde{\sigma}} = 
\{\underbrace{1,\ldots,1}_{n},\underbrace{2,\ldots,2}_{k}\}$
corresponding to the maximal cell of $\mathbb{C}P^n \times \mathbb{C}P^k$. To complete the proof, we need to modify the differential $\partial$ on $AH(\mathbb{C}P^n \times \mathbb{C}P^k)$ by changing it on $\chi_{\widetilde{\sigma}}$ only, so that the new differential $\widetilde\partial$ satisfies
$$
\widetilde\partial(\chi_{\widetilde{\sigma}})= \sum\limits_{\tau, \, \tau', \, \tau\sqcup \tau' = {\widetilde{\sigma}}}\chi_\tau \chi_{\tau'}.
$$
Denote the right hand side of the identity above by $x$. Then
$$
\nu x = \nu (\sum 
\chi_\tau \chi_{\tau'}) = \nu (\chi_{1 \cdots 1} \chi_{2 \cdots 2} +
\chi_{2 \cdots 2} \chi_{1 \cdots 1}) = 0.
$$
Also,
$\partial x = 0$ by direct computation. Since $\nu$ is a chain equivalence, $x$ is a boundary. 
Then 
$$
x = k \partial(\chi_{\widetilde{\sigma}}) + \partial y,
\quad y \in 
\mathbf{AH}(sk^{2N-2} (\mathbb{C}P^n \times \mathbb{C}P^k) ).
$$
The differential $\partial$ in
$\mathbf{AH}(sk^{2N-2} (\mathbb{C}P^n \times \mathbb{C}P^k) )$ 
preserves multisets and increases the tensor length by one. 
%Therefore, the summand $\chi_\tau \chi_{\tau'}, \tau\sqcup \tau' = {\widetilde{\sigma}}$, we have in the expression of the element $x$ can not appear in $\partial y$. 
%It follows that each summand of this form can only appear in $\widetilde{\partial}(\chi_{\widetilde{\sigma}})$, 
%thus $k = \pm 1$. 
%By changing the orientation if necessary, we have
%$k = 1.$
As in the proof of Theorem~\ref{theor_AH_cpn}, we have $k = 1$.
Now Proposition~\ref{prop1_AH} implies that we may change $\partial$ to $\widetilde\partial$ satisfying $\widetilde\partial \chi_{\widetilde{\sigma}} = x$.% is a valid choice of $\partial \chi_{\widetilde{\sigma}}$.
\end{proof}

\begin{theorem}

An Adams--Hilton model for
$\mathbb{C}P^{k_1} \times \cdots \times \mathbb{C}P^{k_n}$ is given by
$$\mathbf{AH}(\mathbb{C}P^{k_1} \times \cdots \times \mathbb{C}P^{k_n}) = 
\bigl( T(\chi_\sigma\mid 
\sigma = 
\{\underbrace{1,\ldots,1}_{\leq k_1},\underbrace{2,\ldots,2}_{\leq k_2},
  \ldots,\underbrace{n,\ldots,n}_{\leq k_n}\}, \,
\sigma \neq \varnothing
), 
\partial \bigr), $$
\begin{equation} \label{eq_diff_cpkcpncps}
\partial \chi_\sigma =  \sum \limits_{\sigma = \tau  \sqcup \tau'}
\chi_\tau \chi_{\tau'}, \quad
|\chi_\sigma| =2 |\sigma| -1
.
\end{equation}
Moreover, this model extends models 
$\mathbf{AH}(\mathbb{C}P^{s_1} \times \cdots \times \mathbb{C}P^{s_n}),$
$0 \leq s_i \leq n_i$.
It also extends the model
$\mathbf{AH}(S^2 \times \cdots \times S^2)$ from Theorem~\ref{theor_AH_sphe}.

\end{theorem}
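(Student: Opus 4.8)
The plan is to argue by induction on $N = k_1 + \cdots + k_n$, following the pattern of the proof of Theorem~\ref{th_AH_prod_cpncpk}. For $N \leq 2$ the product reduces, after discarding the trivial factors $\mathbb{C}P^0 = pt$, to $\mathbb{C}P^2$, to $S^2 \times S^2$, or to a single $S^2$, whose models are supplied by Theorems~\ref{theor_AH_cpn} and~\ref{theor_AH_sphe} and satisfy the coherency conditions on the (basepoint) intersections; this is the base case. Write $P = \mathbb{C}P^{k_1} \times \cdots \times \mathbb{C}P^{k_n}$ and let $\widetilde{\sigma} = \{1^{k_1}, \ldots, n^{k_n}\}$ be the multiset indexing the top cell. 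The $(2N-2)$-skeleton of $P$ is the union $\bigcup_i P_i$ of the subproducts $P_i$ obtained by replacing $\mathbb{C}P^{k_i}$ with $\mathbb{C}P^{k_i - 1}$, so each $P_i$ is again a product of projective spaces, now of total dimension $N-1$, and each pairwise intersection $P_i \cap P_j$ is the subproduct obtained by lowering two of the factors.

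First I would build the model on this skeleton. By the inductive hypothesis each $P_i$ carries a model with differential \eqref{eq_diff_cpkcpncps}, and — since the extension ("moreover") statement is part of that hypothesis — these models are mutually compatible, each restricting to the prescribed model on every intersection $P_i \cap P_j$. Hence the coherency conditions of Theorem~\ref{th_anick}~\eqref{itm:7} hold, and $\colim_i \mathbf{AH}(P_i)$ is an Adams--Hilton model for $sk^{2N-2}P$ whose differential is given by \eqref{eq_diff_cpkcpncps} on every generator $\chi_\sigma$ with $\sigma \neq \widetilde{\sigma}$. Only the single top-cell generator $\chi_{\widetilde{\sigma}}$ remains to be treated.

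It then remains to extend the differential over $\chi_{\widetilde{\sigma}}$ so that $\partial \chi_{\widetilde{\sigma}} = x$, where $x := \sum_{\widetilde{\sigma} = \tau \sqcup \tau'} \chi_\tau \chi_{\tau'}$. A direct computation (equivalently, $\partial^2 = 0$ in the cobar construction) shows $\partial x = 0$. To see that $x$ is a boundary I would use the iterated product homomorphism
\[
\nu \colon \mathbf{AH}(P) \longrightarrow \mathbf{AH}(\mathbb{C}P^{k_1}) \otimes \cdots \otimes \mathbf{AH}(\mathbb{C}P^{k_n}),
\]
which is a chain equivalence by repeated application of Theorem~\ref{th_anick}~(10). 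Since $\nu(\chi_\sigma) = 0$ unless $\sigma$ is supported on a single index, a term $\chi_\tau \chi_{\tau'}$ of $x$ survives under $\nu$ only when both $\tau$ and $\tau'$ are pure: if three or more of the $k_i$ are nonzero no such splitting of $\widetilde{\sigma}$ exists and $\nu x = 0$ outright, while if exactly two indices occur the two surviving terms cancel by the Koszul sign (both pure classes being odd-dimensional), exactly as in Theorem~\ref{th_AH_prod_cpncpk}. In either case $\nu x = 0$, and since $\nu$ is a chain equivalence the cycle $x$ is a boundary. Writing $x = k\,\partial \chi_{\widetilde{\sigma}} + \partial y$ with $y \in \mathbf{AH}(sk^{2N-2}P)$, and noting that $\partial$ preserves the underlying multiset and raises tensor length, the length-two multiset-$\widetilde{\sigma}$ part of $\partial y$ vanishes; this forces $k = \pm 1$, and reorienting the top cell gives $k = 1$, exactly as in Theorems~\ref{theor_AH_cpn} and~\ref{th_AH_prod_cpncpk}. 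Proposition~\ref{prop1_AH} then lets us modify the differential so that $\partial \chi_{\widetilde{\sigma}} = x$, completing the inductive step. By construction the resulting model restricts to the prescribed models on each subproduct $\mathbb{C}P^{s_1} \times \cdots \times \mathbb{C}P^{s_n}$, $s_i \leq k_i$, and in particular (taking all $s_i = 1$) on $\mathbf{AH}(S^2 \times \cdots \times S^2)$ of Theorem~\ref{theor_AH_sphe}, in agreement with the squarefree case of Corollary~\ref{cor_s2k}.

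The main obstacle is the top-cell step: one must simultaneously know that $x$ is a cycle, that it is null-homologous (which is where the chain equivalence $\nu$ and the vanishing $\nu x = 0$ enter), and that its coefficient against $\partial \chi_{\widetilde{\sigma}}$ is a unit. The delicate point inside this is the verification $\nu x = 0$, immediate when at least three factors are nontrivial but resting on a Koszul-sign cancellation in the two-factor situation; keeping the signs and the coherency bookkeeping consistent across the induction is the crux of the argument.
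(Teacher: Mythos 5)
Your proposal is correct and follows essentially the same route as the paper: induction on $N=k_1+\cdots+k_n$, assembling the $(2N-2)$-skeleton model by coherency and Theorem~\ref{th_anick}~\eqref{itm:7}, then correcting the top-cell differential by showing $x$ is a null-homologous cycle via a chain equivalence $\nu$ and invoking the tensor-length/multiset argument together with Proposition~\ref{prop1_AH}. The only (harmless) deviation is that you use the iterated $n$-fold map $\nu\colon \mathbf{AH}(P)\to\mathbf{AH}(\mathbb{C}P^{k_1})\otimes\cdots\otimes\mathbf{AH}(\mathbb{C}P^{k_n})$, whereas the paper splits off only the last factor, so its verification of $\nu x=0$ always reduces to the single Koszul cancellation $\chi_\alpha\chi_\beta+\chi_\beta\chi_\alpha\mapsto 0$ rather than your two-case analysis.
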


\begin{proof}
Analogously, the proof is by induction on $N = k_1 + \cdots + k_n$. 
The case $N = 2$ is similar to the one from the previous theorem.

Assume the theorem holds for $N-1$.
%$(\mathbb{C}P^{k_1} \times \cdots \times \mathbb{C}P^{k_n})$
%for all  $(k_1, \ldots, k_n)$ sati $k_1 +N-1$. 
By induction, we have already constructed models
$$
\mathbf{AH}(\mathbb{C}P^{k_1 -1} \times 
\mathbb{C}P^{k_2} \times 
\cdots \times \mathbb{C}P^{k_n}),
 \ldots, 
\mathbf{AH}(\mathbb{C}P^{k_1} \times 
\mathbb{C}P^{k_2} \times 
\cdots \times \mathbb{C}P^{k_n -1}),
$$
the maps $\partial$ and $\theta$ agree on intersections. 

Thus we obtain the model on $(2N-2)$-skeleton
$$
sk^{2N-2} (\mathbb{C}P^{k_1} \times \cdots \times \mathbb{C}P^{k_n}) =
\mathbb{C}P^{k_1-1} \times \cdots \times \mathbb{C}P^{k_n}
\cup
\cdots \cup
\mathbb{C}P^{k_1 } \times \cdots \times \mathbb{C}P^{k_n -1},
$$
the formula \eqref{eq_diff_cpkcpncps} holds.

By Theorem~\ref{th_anick}~(10), we may choose the maps 
$\partial$ and $\theta$
on $AH(\mathbb{C}P^{k_1} \times \cdots \times \mathbb{C}P^{k_n})$ 
inductively in such a way that
the ring homomorphism
$$
\nu \colon \mathbf{AH}(\mathbb{C}P^{k_1} \times \cdots \times \mathbb{C}P^{k_n})
\rightarrow
\mathbf{AH}(\mathbb{C}P^{k_1} \times \cdots \times \mathbb{C}P^{k_{n-1}}) \otimes
\mathbf{AH}(\mathbb{C}P^{k_n})
$$
is a dga-morphism, and hence a chain equivalence.
We prove that the constructed model
$\mathbf{AH}(sk^{2N-2} (\mathbb{C}P^{k_1} \times \cdots \times \mathbb{C}P^{k_n}) )$ satisfies
$\partial \nu = \nu \partial$.
We have
\begin{multline*}
\nu(\chi_{\sigma}) = \chi_{\sigma} \otimes 1, \,
 n \notin \sigma;
\\
\nu(\chi_{\sigma}) = 1 \otimes \chi_{\sigma} , 
\, \sigma = \{n, \ldots, n\};
\\
\nu(\chi_{\sigma}) =0, \,\text{otherwise}.
\end{multline*}
Suppose $n \notin \sigma$ , then
$$\partial \nu (\chi_\sigma) = \partial (\chi_\sigma \otimes 1) = 
\partial (\chi_\sigma) \otimes 1 = \nu \partial  (\chi_\sigma).$$
Suppose $n \in \sigma$, $i \in \sigma$, $i = 1, \ldots, n-1$, then  
$
\sigma = \alpha \sqcup \beta, \,\,
n \notin \alpha, \,\,
\beta = \{n, \ldots, n\},
$
$$
\nu \partial (\chi_\sigma) = \nu (\sum \chi_\tau \chi_{\tau'}) = \text{// the rest summands vanish //}
$$
$$
=
\nu (\chi_{\alpha} \chi_{\beta} +
\chi_{\beta} \chi_{\alpha}) = 0 = \partial \nu (\chi_\sigma).
$$
By Theorem~\ref{th_anick}~(10), there is an Adams--Hilton model
$\mathbb{C}P^{k_1} \times \cdots \times \mathbb{C}P^{k_n}$ 
that extends the model $\mathbf{AH}(sk^{2N-2} (\mathbb{C}P^{k_1} \times \cdots \times \mathbb{C}P^{k_n}) )$,
for which the equality $\nu \partial = \partial \nu$ holds. 

Consider the generator $\chi_{\widetilde{\sigma}}$,
${\widetilde{\sigma}} = 
\{\underbrace{1,\ldots,1}_{k_1},\underbrace{2,\ldots,2}_{k_2},
  \ldots,\underbrace{n,\ldots,n}_{k_n}\}$.
Denote by $\widetilde{\partial}(\chi_{\widetilde{\sigma}})$
the differential in the extended model  
$\mathbf{AH}(\mathbb{C}P^{k_1} \times \cdots \times \mathbb{C}P^{k_n})$.

Denote by $x$ the element
$$
x:= \sum\limits_{\tau, \, \tau', \, \tau\sqcup \tau' = {\widetilde{\sigma}}}
\chi_\tau \chi_{\tau'}.
$$
Then
$$
\nu x = \nu (\sum 
\chi_\tau \chi_{\tau'}) = \nu (\chi_{\alpha} \chi_{\beta} +
\chi_{\beta} \chi_{\alpha}) = 0,
$$
where
$
\sigma = \alpha \sqcup \beta, \,\,
n \notin \alpha, \,\,
\beta = \{n, \ldots, n\}
$.
By direct computation we obtain
$\partial x = 0$. The map $\nu$ is a chain equivalence,
hence an element $x$ is a boundary. 
Then 
$$
x = k \widetilde{\partial}(\chi_{\widetilde{\sigma}}) + \partial y,
\quad y \in 
\mathbf{AH}(sk^{2N-2} (\mathbb{C}P^{k_1} \times \cdots \times \mathbb{C}P^{k_n} ).
$$
To conclude the proof it remains to use the argument from the previous theorem. 
\end{proof}

\begin{corollary} \label{cor_AH_cpinf}

An Adams--Hilton model 
for the product of $n$ copies of $\cp$
is given~by
$$\mathbf{AH}(\cp \times \cdots \times \cp) = 
\bigl( T(\chi_\sigma\mid 
\sigma = 
\{1,\ldots,1,2,\ldots,2,
  \ldots,n,\ldots,n\}, \,
\sigma \neq \varnothing
), 
\partial \bigr), $$
$$
\partial \chi_\sigma =  \sum \limits_{\sigma = \tau  \sqcup \tau'}
\chi_\tau \chi_{\tau'}, 
\quad
|\chi_\sigma| =2 |\sigma| -1
.
$$
Moreover,
this model extends the model
$\mathbf{AH}(S^2 \times \cdots \times S^2)$ from Theorem~\ref{theor_AH_sphe}.

\end{corollary}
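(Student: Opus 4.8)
The plan is to realise $\cp \times \cdots \times \cp$ as a colimit of the finite products treated in the preceding theorem and then to invoke the colimit property of Adams--Hilton models. First I would give $\cp$ its standard CW structure with one cell in each even dimension and write $\cp = \bigcup_k \mathbb{C}P^k$ as the increasing union of its skeleta, so that with the product CW structure
$$
\cp \times \cdots \times \cp
= \bigcup_{(k_1, \ldots, k_n)} \mathbb{C}P^{k_1} \times \cdots \times \mathbb{C}P^{k_n}
= \colim_{(k_1, \ldots, k_n)} \mathbb{C}P^{k_1} \times \cdots \times \mathbb{C}P^{k_n},
$$
the colimit being taken over the directed set of $n$-tuples of nonnegative integers ordered coordinatewise. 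Each subcomplex $\mathbb{C}P^{k_1} \times \cdots \times \mathbb{C}P^{k_n}$ is genuine, and for each tuple the preceding theorem supplies an Adams--Hilton model with differential \eqref{eq_diff_cpkcpncps}.

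Next I would verify the coherency conditions of Theorem~\ref{th_anick}~\eqref{itm:7}. The "Moreover" clause of the preceding theorem states that the model for $\mathbb{C}P^{k_1} \times \cdots \times \mathbb{C}P^{k_n}$ extends the model over every subproduct $\mathbb{C}P^{s_1} \times \cdots \times \mathbb{C}P^{s_n}$ with $s_i \le k_i$. Since any pairwise intersection of two subcomplexes in the family is again a product $\mathbb{C}P^{s_1} \times \cdots \times \mathbb{C}P^{s_n}$, where $s_i$ is the smaller of the two exponents in coordinate $i$, this extension property shows at once that both $\partial$ and $\theta$ restrict to the model on every intersection. Hence the family $\{\mathbf{AH}(\mathbb{C}P^{k_1} \times \cdots \times \mathbb{C}P^{k_n})\}$ is coherent, and Theorem~\ref{th_anick}~\eqref{itm:7} yields
$$
\mathbf{AH}(\cp \times \cdots \times \cp)
= \colim_{(k_1, \ldots, k_n)} \mathbf{AH}(\mathbb{C}P^{k_1} \times \cdots \times \mathbb{C}P^{k_n}).
$$

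It then remains to identify this colimit and the differential explicitly. The structure maps are the inclusions of tensor subalgebras sending each $\chi_\sigma$ to $\chi_\sigma$, so in the colimit the generators $\chi_\sigma$ range over all nonempty multisets on $\{1, \ldots, n\}$ with unbounded multiplicities, matching the index set in the statement; the degree formula $|\chi_\sigma| = 2|\sigma| - 1$ and the differential \eqref{eq_diff_cpkcpncps} pass verbatim, since they depend only on $\sigma$ and not on the ambient exponents. Finally, the assertion that this model extends $\mathbf{AH}(S^2 \times \cdots \times S^2)$ follows because each finite model already does so by the preceding theorem, and this inclusion, being one of the compatible restrictions, is preserved under the colimit.

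I do not expect any genuine obstacle here; the entire content is bookkeeping, and the only step requiring attention is the verification of the coherency conditions on intersections. That verification is immediate from the extension clause of the preceding theorem, precisely because every relevant intersection is again a finite product of projective spaces to which that theorem applies directly.
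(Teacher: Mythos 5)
Your proposal is correct and takes essentially the same route the paper intends: the corollary is deduced by writing the product of $n$ copies of $\cp$ as the colimit of the finite products $\mathbb{C}P^{k_1} \times \cdots \times \mathbb{C}P^{k_n}$, checking the coherency conditions of Theorem~\ref{th_anick}~\eqref{itm:7} via the ``Moreover'' (extension) clause of the preceding theorem, and passing to the colimit of models. Your explicit identification of pairwise intersections as $\mathbb{C}P^{\min(k_i,l_i)}$-products is exactly the bookkeeping the paper leaves implicit.
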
 

\begin{theorem} \label{th_AH_cpk}

An Adams--Hilton model for $\cpk$ 
is given by
$$\mathbf{AH}(\cpk) = 
\bigl( T(\chi_\sigma\mid 
I_{\sigma} \in \K, \,\sigma \neq \varnothing
), 
\partial \bigr), $$
\begin{equation} 
\partial \chi_\sigma =  \sum \limits_{\sigma = \tau  \sqcup \tau'}
\chi_\tau \chi_{\tau'}, 
\quad
|\chi_\sigma| =2 |\sigma| -1
,
\end{equation}
where $I_{\sigma}$ denotes the support of a multiset $\sigma$.
Moreover,
this model extends the  model
$\mathbf{AH}(\stwok)$ from Corollary~\ref{cor_s2k}.
\end{theorem}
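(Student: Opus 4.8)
The plan is to mirror the proof of Theorem~\ref{ther_AH_sph_colim}: realise $\cpk$ as a colimit of products of copies of $\cp$ and transport the Adams--Hilton models of Corollary~\ref{cor_AH_cpinf} to the colimit. First I would record the decomposition
$$
\cpk = \bigcup_{J \in \K} (\cp)^{J} = \colim_{J \in \K} (\cp)^{J},
$$
where $(\cp)^{J} = \prod_{j \in J} \cp$ is the coordinate subproduct sitting inside $(\cp)^m$; this is immediate from the definition of the polyhedral product, the diagram being indexed by the face poset of $\K$. For each simplex $J$, Corollary~\ref{cor_AH_cpinf} provides a model $\mathbf{AH}((\cp)^{J})$ with generators $\chi_\sigma$, $I_\sigma \subseteq J$, and differential $\partial \chi_\sigma = \sum_{\sigma = \tau \sqcup \tau'} \chi_\tau \chi_{\tau'}$.

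Next I would check the coherency conditions of Theorem~\ref{th_anick}~\eqref{itm:7}. The geometric input is that the intersection of two faces is again a face, $(\cp)^{J_1} \cap (\cp)^{J_2} = (\cp)^{J_1 \cap J_2}$ with $J_1 \cap J_2 \in \K$. Because the differential formula depends only on $\sigma$ and not on the ambient product, and because (by the extension-over-subproducts clause of Corollary~\ref{cor_AH_cpinf}) the model $\mathbf{AH}((\cp)^{J})$ restricts to $\mathbf{AH}((\cp)^{J \cap J'})$ on the subproduct, both $\partial$'s and both $\theta$'s restricted to $AH((\cp)^{J_1 \cap J_2})$ agree with those of the common submodel. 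This is exactly the coherency required.

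Then Theorem~\ref{th_anick}~\eqref{itm:7} gives $\mathbf{AH}(\cpk) = \colim_{J \in \K} \mathbf{AH}((\cp)^{J})$, and I would evaluate the colimit. A multiset $\sigma$ appears among the generators for some $J \in \K$ exactly when $I_\sigma \subseteq J$ for some simplex, i.e. when $I_\sigma \in \K$; hence the colimit is the free algebra on $\{\chi_\sigma \mid I_\sigma \in \K,\ \sigma \neq \varnothing\}$, with the stated degrees. The differential is inherited termwise and is well defined because $\K$ is closed under taking subsets: every summand $\chi_\tau \chi_{\tau'}$ of $\partial \chi_\sigma$ has $I_\tau, I_{\tau'} \subseteq I_\sigma$, hence $I_\tau, I_{\tau'} \in \K$.

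For the final assertion I would note that $\stwok = \colim_{J \in \K} (S^2)^{J}$ over the same index category, and that by Corollary~\ref{cor_AH_cpinf} each $\mathbf{AH}((\cp)^{J})$ extends $\mathbf{AH}((S^2)^{J})$ compatibly; passing to colimits shows $\mathbf{AH}(\cpk)$ extends $\mathbf{AH}(\stwok)$ of Corollary~\ref{cor_s2k}. Concretely, the latter is the sub-dga on the generators $\chi_\sigma$ with $\sigma$ a genuine set, which is closed under $\partial$. I expect the only step needing care to be the compatibility of the face models on intersections; but since this is furnished by the extension property already built into Corollary~\ref{cor_AH_cpinf}, the whole argument is essentially formal once that corollary is in hand.
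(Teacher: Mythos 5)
Your proposal follows exactly the paper's argument: present $\cpk$ as $\colim_{J \in \K} (\cp)^{J}$, invoke the extension-over-subproducts property of Corollary~\ref{cor_AH_cpinf} to verify the coherency conditions of Theorem~\ref{th_anick}~\eqref{itm:7}, and pass to the colimit of the models. The extra details you supply (identifying the generators of the colimit, well-definedness of $\partial$, and the compatibility with $\mathbf{AH}(\stwok)$) are correct elaborations of what the paper leaves implicit.
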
 

\begin{proof}
The polyhedral product $\cpk$ is the colimit of products 
$$
\cpk = \colim_{J \in \K} (\cp)^{J}.
$$

The Adams--Hilton models from Corollary~\ref{cor_AH_cpinf} extend each
other over subproducts and therefore satisfy the coherency conditions, see
Theorem~\ref{th_anick}~\eqref{itm:7}.
Thus, we obtain
\[
\mathbf{AH}(\cpk) = \colim_{J \in \K} \mathbf{AH}((\cp)^{J}).
\qedhere
\]
\end{proof}

\begin{theorem}  \label{th_AH_cobar_coincide}
Let $\K$ be an arbitrary simplicial complex.
Consider Adams--Hilton models $\ah(\sk)$ and $\ah(\cpk)$
from Theorem~\ref{ther_AH_sph_colim} and 
Theorem~\ref{th_AH_cpk}. Then 
$$
  \ah(\sk)  \cong \cobar H_{*}(\sk),\qquad
  \ah(\cpk)  \cong \cobar H_{*}(\cpk)
$$
in the category DGA.
\end{theorem}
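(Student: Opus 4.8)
The plan is to realize both isomorphisms by the identity assignment on generators, $b_J \mapsto b_J$ in the sphere case and $\chi_\sigma \mapsto \chi_\sigma$ in the Davis--Januszkiewicz case, and to verify that this assignment commutes with the differentials. In both situations the two dg algebras under comparison have, by their very constructions, free underlying algebras on the same indexing set with matching degree assignments; hence the identity on generators is automatically an isomorphism of graded algebras, and the only thing left to check is the agreement of the differentials on generators.

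First I would treat the polyhedral product of spheres, comparing $\ah(\sk)$ from Theorem~\ref{ther_AH_sph_colim} with $\cobar H_{*}(\sk)$ from Proposition~\ref{prop_cobar_sph}. Both are built on $T(U)$ with $U = \langle b_J \mid J \in \K,\ J \neq \varnothing\rangle$ and $|b_J| = n_{j_1} + \cdots + n_{j_s} - 1$, so the underlying graded algebras coincide. The Adams--Hilton differential \eqref{eq_AH_sk} is written as a sum over all shuffles $S(p, s-p)$ in tensor form; Proposition~\ref{prop_diff_AHsph} rewrites exactly this expression as a commutator sum over the restricted shuffles $\widetilde{S}(p, s-p)$ with Koszul sign $\varepsilon(\theta)$ of the suspended generators. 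That rewritten formula is, term for term, the cobar differential \eqref{cobardif2}. Thus the two differentials are literally equal, and the identity map is the desired DGA isomorphism.

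Next I would treat $\cpk$, comparing $\ah(\cpk)$ from Theorem~\ref{th_AH_cpk} with $\cobar H_{*}(\cpk) = \cobar \Z\langle\K\rangle$, whose description is recorded in \eqref{cobar_cpk} and \eqref{diff_cpk}. Here no rewriting is necessary: both dg algebras equal $T(\chi_\sigma \mid I_\sigma \in \K,\ \sigma \neq \varnothing)$ with $|\chi_\sigma| = 2|\sigma| - 1$ and differential $\partial \chi_\sigma = \sum_{\sigma = \tau \sqcup \tau'} \chi_\tau \chi_{\tau'}$. The two formulas are identical, so the identity on generators is again a morphism of dg algebras and hence a DGA isomorphism.

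The step requiring the most care is the sign comparison in the sphere case, since the Adams--Hilton differential is presented in tensor form over all shuffles whereas the cobar differential is presented in commutator form over the restricted shuffles with $\theta(1)=1$. This is not a genuine obstacle, however: the reconciliation of the two presentations, including the precise matching of the Koszul signs of the suspended generators $sb_{j_1}, \ldots, sb_{j_s}$, is exactly what Proposition~\ref{prop_diff_AHsph} together with the sign analysis carried out inside Proposition~\ref{prop_cobar_sph} already supplies. Once these are invoked, the theorem follows by direct comparison of the explicit formulas, with no further computation needed.
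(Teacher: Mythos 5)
Your proposal is correct and follows exactly the paper's own argument: the proof there is precisely a direct comparison of the explicit presentations, citing Theorem~\ref{ther_AH_sph_colim} against Proposition~\ref{prop_cobar_sph} for $\sk$ and Theorem~\ref{th_AH_cpk} against formulae~\eqref{cobar_cpk} and~\eqref{diff_cpk} for $\cpk$. Your additional remark that Proposition~\ref{prop_diff_AHsph} is what reconciles the tensor-form and commutator-form presentations of the differential in the sphere case is a correct and useful elaboration of the detail the paper leaves implicit.
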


%\begin{theorem} \label{th_sk_coincide}
%The Adams--Hilton model
%$\mathbf{AH}(\sk)$
%from Theorem~\ref{ther_AH_sph_colim} coincides with the cobar %construction
%$\cobar H_{*}(\sk)$.
%\end{theorem}
%\begin{theorem} \label{th_cpk_coincide}
%The Adams--Hilton model
%$\mathbf{AH}(\cpk)$
%from Theorem~\ref{th_AH_cpk} coincides with the cobar construction
%$\cobar H_{*}(\cpk)$.
%\end{theorem}

\begin{proof}
For the case $\sk$, this follows by comparing Theorem~\ref{ther_AH_sph_colim} with Proposition~\ref{prop_cobar_sph}.
For the case $\cpk$, 
compare Theorem~\ref{th_AH_cpk} and formulae~\eqref{cobar_cpk},~\eqref{diff_cpk}.
\end{proof}

\begin{remark}
Any Adams--Hilton model has a structure of a "Hopf algebra up to homotopy".
For details, see \cite{Anick}.
\end{remark}

\begin{remark}
In general, an Adams--Hilton model $\ah(X) $ does not coincide with the cobar construction of the homology coalgebra. For example, the model for Moore space from {\cite[Cor.~2.3]{AH} } is isomorphic to $\cobar (C,\partial)$ with nontrivial differential $\partial$. 
\end{remark}

%\textcolor{red}{
%\begin{remark}
%Note that $\ah(X)$ does not coincide with any cobar construction of %coalgebra generally.
%\end{remark}
%}

\section{Adams--Hilton models for maps and chains in cobar construction corresponding to Whitehead products}

Here we describe the chain in the cobar construction
$\cobar H_{*}(\cpk)$ corresponding to the Hurewicz image  of the higher Whitehead product $[[\mu_1, \mu_2, \mu_3], \mu_4, \mu_5]$. We consider the Adams--Hilton models 
for
$\sk$, $\stwok$, $\cpk$ from 
Theorem~\ref{ther_AH_sph_colim}, 
Corollary~\ref{cor_s2k},
Theorem~\ref{th_AH_cpk}, which coincide with 
the corresponding cobar constructions. The reason we use Adams--Hilton models is that they behave nicely with respect to CW-maps.

%Here we consider the Adams--Hilton models for
%$\sk$, $\stwok$, $\cpk$ from 
%Theorem~\ref{ther_AH_sph_colim}, 
%Corollary~\ref{cor_s2k},
%Theorem~\ref{th_AH_cpk}. These models coincide with the corresponding cobar constructions.

\begin{proposition} \label{prop_incl_sk_cpk}
Let $\K$ be an arbitrary simplicial complex.
Then the inclusion $\ah(\stwok) \hookrightarrow \ah(\cpk)$ is an Adams--Hilton model 
$\ah(i)$
for the inclusion $\stwok \xhookrightarrow{i} \cpk$.
\end{proposition}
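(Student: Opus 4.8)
The plan is to recognize $i\colon \stwok \hookrightarrow \cpk$ as a CW-subcomplex inclusion and then invoke Theorem~\ref{th_anick}~\eqref{itm:5}, which asserts that a model for a subcomplex injection is the corresponding injection of models. First I would pin down the cell structures. Since $S^2 \cong \mathbb{C}P^1$ is the $2$-skeleton of $\cp$, the inclusion $\mathbb{C}P^1 \hookrightarrow \cp$ is a subcomplex inclusion in each coordinate, and passing to polyhedral products it induces a subcomplex inclusion $\stwok \hookrightarrow \cpk$. Concretely, a cell of $\cpk$ is a product of cells of the factors $\cp$ and is indexed by a multiset $\sigma$ with $I_\sigma \in \K$; such a cell lies in $\stwok$ exactly when every index occurs at most once, i.e. when $\sigma = J$ is an honest subset of $[m]$ with $J \in \K$. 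Thus the cells of $\stwok$ are precisely the repetition-free cells of $\cpk$, and the generators $b_J$ of $\ah(\stwok)$ are identified with the generators $\chi_J$ of $\ah(\cpk)$.

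Next I would check that the two chosen models are compatible in the sense of Theorem~\ref{th_anick}~\eqref{itm:4}, namely that $\ah(\cpk)$ extends $\ah(\stwok)$ over the subcomplex. This is already recorded in the last sentence of Theorem~\ref{th_AH_cpk}. At the level of differentials the verification is immediate: under $b_J \mapsto \chi_J$ the formula $\partial b_J = \sum_{(I,L),\, I \sqcup L = J} b_I \otimes b_L$ of Corollary~\ref{cor_s2k} agrees with the restriction of $\partial \chi_J = \sum_{J = \tau \sqcup \tau'} \chi_\tau \chi_{\tau'}$ of Theorem~\ref{th_AH_cpk}, because when $J$ has no repeated indices the ordered partitions of the multiset $J$ into submultisets $(\tau,\tau')$ are exactly the ordered pairs of disjoint subsets $(I,L)$ with $I \sqcup L = J$. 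Hence $b_J \mapsto \chi_J$ defines an injective morphism of dg algebras realizing $\ah(\stwok)$ as a dg subalgebra of $\ah(\cpk)$, which is precisely the map appearing in the statement.

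With the subcomplex structure and the compatibility of models in hand, Theorem~\ref{th_anick}~\eqref{itm:5} applies verbatim and identifies the inclusion $\ah(\stwok) \hookrightarrow \ah(\cpk)$ as an Adams--Hilton model $\ah(i)$ for $i$. The only point that requires a little care is the subcomplex claim for polyhedral products --- that $\stwok$ is a genuine CW-subcomplex of $\cpk$ rather than merely homotopy equivalent to one --- but this is clear from the description of the polyhedral product as the union of the subproducts $\prod_{i \in I}\cp \times \prod_{i \notin I} pt$ over $I \in \K$, each given its product cell structure. I do not expect a real obstacle here, since both of the ingredients needed are already built into the statements of Corollary~\ref{cor_s2k} and Theorem~\ref{th_AH_cpk}.
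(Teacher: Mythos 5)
Your proposal is correct and follows exactly the paper's argument: the paper's proof likewise cites the extension statement in Theorem~\ref{th_AH_cpk} and then applies Theorem~\ref{th_anick}~\eqref{itm:5}. Your additional verification of the cell structure and the compatibility of the differentials is a fleshed-out version of what the paper leaves implicit, not a different route.
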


\begin{proof}
By Theorem~\ref{th_AH_cpk}, the model $\ah(\cpk)$ extends the model 
$\ah(\stwok)$. Now the result follows from Theorem~\ref{th_anick}~\eqref{itm:5}.
\end{proof}

\begin{corollary}
Let $\K$ be an arbitrary simplicial complex.
Then the inclusion $\stwok \hookrightarrow \cpk$
and the inclusion of dgas
$$\cobar(H_{*}(\stwok)) \hookrightarrow \cobar(H_{*}(\cpk))$$
induce the same map 
$$
H_{*}(\Omega \stwok) \rightarrow H_{*}(\Omega \cpk )
$$
in homology groups.
\end{corollary}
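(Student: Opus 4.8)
The plan is to realise both maps in the statement as the single map $H_*(\Omega i)$ induced by $\Omega i\colon\Omega\stwok\to\Omega\cpk$, after transporting everything through the Adams--Hilton picture. Recall that for any model the quasi-isomorphism $\theta_X\colon\ah(X)\xrightarrow{\simeq}\cux$ induces on homology exactly Adams' isomorphism $H(\ah(X))\cong H_*(\Omega X)$. For the inclusion $i\colon\stwok\hookrightarrow\cpk$, Proposition~\ref{prop_incl_sk_cpk} tells us that the algebra inclusion $\ah(\stwok)\hookrightarrow\ah(\cpk)$ \emph{is} an Adams--Hilton model $\ah(i)$, so it comes equipped with a chain homotopy $\psi_i$ making the defining square commute up to homotopy. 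Passing to homology the square commutes strictly, and hence under the two Adams isomorphisms the map $H(\ah(i))$ is identified with $H_*(\Omega i)$.

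Next I would make precise the commutativity of the diagram
$$
\xymatrix{
\ah(\stwok) \ar@{^{(}->}[r] \ar[d]_{\cong} & \ah(\cpk) \ar[d]^{\cong} \\
\cobar H_*(\stwok) \ar@{^{(}->}[r] & \cobar H_*(\cpk)
}
$$
in which the vertical arrows are the DGA isomorphisms of Theorem~\ref{th_AH_cobar_coincide} (using Corollary~\ref{cor_s2k} for $\stwok$). These isomorphisms are the identity on generators: the Adams--Hilton generator $b_J$ of $\stwok$ is matched with the cobar generator $\chi_\sigma$ for $\sigma=J$, the multiset supported on $J$ with all multiplicities equal to one, and likewise for $\cpk$; in both cases the generating set and the differential formula agree on the nose. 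Both horizontal inclusions carry a generator indexed by $J$ to the generator with the same index $J$ — this is precisely the canonical embedding observed in the remark following Proposition~\ref{prop_cobar_sph}, and it is the inclusion $b_J\mapsto\chi_J$ by which $\ah(\cpk)$ extends $\ah(\stwok)$ in Theorem~\ref{th_AH_cpk}. Since all four maps are determined by their values on generators and agree there, the square commutes in DGA.

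With this square in hand the conclusion is immediate. On homology the lower horizontal map, i.e.\ the map induced by the cobar inclusion $\cobar H_*(\stwok)\hookrightarrow\cobar H_*(\cpk)$, is conjugate to $H(\ah(i))$ by the isomorphisms of Theorem~\ref{th_AH_cobar_coincide}, hence equals it; and $H(\ah(i))$ has already been identified with $H_*(\Omega i)$, the map induced by the topological inclusion. The only point requiring genuine care is the compatibility of the two identifications $\ah(-)\cong\cobar H_*(-)$ with the inclusions, that is, the commutativity of the square above; but because these isomorphisms are identities on the generating sets and both inclusions are generator-wise inclusions, this presents no real obstacle.
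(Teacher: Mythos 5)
Your argument is correct and is essentially the paper's intended reasoning: the paper states this as an immediate corollary of Proposition~\ref{prop_incl_sk_cpk} (the inclusion of Adams--Hilton models is a model for the topological inclusion) combined with the identifications of Theorem~\ref{th_AH_cobar_coincide}, which is exactly the chain of identifications you spell out. Your extra care about the commutativity of the square of DGA isomorphisms and generator-wise inclusions is a correct elaboration of what the paper leaves implicit.
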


\begin{proposition} \label{prop_AHmap_attaching}
Let
$$\omega \colon  S^{N-1} \rightarrow (\underline{S})^{\partial \Delta (1, \ldots, k)} \cong T(S^{n_1}, \ldots, S^{n_k}), \,\,N = n_1 + \cdots + n_k,$$
be the attaching map of the top cell
in $S^{n_1}\times \cdots \times S^{n_k}$.
Suppose
$\ah( (\underline{S})^{\partial \Delta })$ is the Adams--Hilton model
from Theorem~\ref{ther_AH_sph_colim}.
Then there is an Adams--Hilton model $\ah(\omega)$ given~by
$$\ah(\omega) \colon  \ah(S^{N-1}) \cong (T(a), 0) \longrightarrow
\ah( T(S^{n_1}, \ldots, S^{n_k})),$$
$$
\ah( T(S^{n_1}, \ldots, S^{n_k}))
\cong
(T(b_J, J \in \partial \Delta, J \neq \varnothing), \partial),
$$
$$
a \mapsto
\sum_{p=1}^{k-1} \sum_{\theta \in S(p, k-p)} \varepsilon(\theta)
(-1)^{|b_{{\theta(1)} \cdots {\theta(p)}}|+1 }
b_{{\theta(1)} \cdots {\theta(p)}}\otimes b_{{\theta(p+1)} 
\cdots {\theta(k)}}. 
$$
\end{proposition}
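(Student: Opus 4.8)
The plan is to recognize that the element prescribed as $\ah(\omega)(a)$ is literally the Adams--Hilton differential of the top generator in the model of the product $S^{n_1}\times\cdots\times S^{n_k}$, and that this differential is, by the very definition of the Adams--Hilton construction, a cycle representing the homology class $H(\Omega\omega)(\beta)$ that any model of $\omega$ must realize. Write $X=S^{n_1}\times\cdots\times S^{n_k}$ and let $\Xi$ denote the element of $\ah(T(S^{n_1},\ldots,S^{n_k}))$ displayed on the right-hand side of the asserted formula for $\ah(\omega)(a)$.

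First I would identify the skeleton. The product $X$ has cells $e_J$ of dimension $\sum_{j\in J}n_j$, and every cell other than the top one $e^N$ has dimension at most $N-2$; hence $X^{N-1}=(\underline{S})^{\partial\Delta}\cong T(S^{n_1},\ldots,S^{n_k})$, and $X$ is obtained from this fat wedge by attaching $e^N$ along $\omega$. Therefore the generator $b_{1\cdots k}$ of $\ah(X)$ from Theorem~\ref{theor_AH_sphe} is precisely the one associated with this top cell, and by Proposition~\ref{prop_diff_AHsph} (formula~\eqref{eq_AH_sphe2}) its differential equals $\Xi$. Moreover, since $\ah(X)$ extends the model $\ah(T(S^{n_1},\ldots,S^{n_k}))$ of Theorem~\ref{ther_AH_sph_colim}, the differential $\partial_X$ restricts to the fat-wedge differential there.

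Next I would unwind the inductive definition of the Adams--Hilton differential, equation~\eqref{eq_def_AH_dif}. Because $\omega$ is the attaching map of $e^N$ and $X^{N-1}=T(S^{n_1},\ldots,S^{n_k})$, the element $z:=\partial_X b_{1\cdots k}=\Xi$ satisfies $(\theta_X)_*[z]=H(\Omega\omega)(\beta)$, where $\beta$ generates $H_{N-2}(\Omega S^{N-1})\cong\Z$. On the other hand the standard model $\ah(S^{N-1})\cong(T(a),0)$ has $(\theta_{S^{N-1}})_*[a]=\beta$, and the defining homological condition for a model of $\omega$ on its single generator is exactly $(\theta_T)_*[\ah(\omega)(a)]=H(\Omega\omega)(\theta_{S^{N-1}})_*[a]=H(\Omega\omega)(\beta)$. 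Thus setting $\ah(\omega)(a):=\Xi$ is admissible: I would check that $\Xi$ lies in the sub-dga $\ah(T(S^{n_1},\ldots,S^{n_k}))$ (every $b_J$ occurring has $J\subsetneq\{1,\ldots,k\}$, i.e. $J\in\partial\Delta$) and is a cycle there, since $\partial_T\Xi=\partial_X\Xi=\partial_X^2 b_{1\cdots k}=0$; and that its class is the required $H(\Omega\omega)(\beta)$, which is the content of the previous sentence. Extending freely over $T(a)$ then gives a dga-morphism meeting the homological constraint of the inductive construction, with accompanying chain homotopy $\psi_\omega$ produced by that construction; any other admissible choice differs from it only by the indeterminacy of Proposition~\ref{prop2_AH_f}. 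The same identity can be read off from Theorem~\ref{th_anick}~\eqref{itm:9} applied with $X_0=T(S^{n_1},\ldots,S^{n_k})$, $f_0=\omega$, and $v_f=b_{1\cdots k}$.

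The main obstacle I expect is the bookkeeping of signs together with the precise identification of the generator $a$ with the fundamental class $\beta$: one must reconcile the orientation conventions (the three-cell decomposition of $D^{N}$ in~\eqref{itm:9}, where $\partial z=-z_0$, and the Koszul signs $\varepsilon(\theta)$ in~\eqref{eq_AH_sphe2}) so that $\ah(\omega)(a)$ emerges with exactly the displayed sign rather than its negative. Once the conventions are pinned down, everything reduces to a direct unwinding of the Adams--Hilton differential for the attaching map of the top cell of a product of spheres.
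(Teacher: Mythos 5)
Your argument is correct and follows essentially the same route as the paper's own proof: both identify the fat wedge with the $(N-1)$-skeleton of $S^{n_1}\times\cdots\times S^{n_k}$, use the inductive definition~\eqref{eq_def_AH_dif} of the Adams--Hilton differential to see that $\partial b_{1\cdots k}$ represents $H(\Omega\omega)(\beta)$, and invoke Proposition~\ref{prop2_AH_f} to conclude that $\ah(\omega)(a)$ may be taken to be $\partial b_{1\cdots k}$. The only difference is presentational --- you verify directly that the prescribed element satisfies the homological constraint, whereas the paper starts from an arbitrary model of $\omega$ and corrects it by a boundary --- and this does not change the substance.
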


\begin{remark}
The element $a$ maps exactly to the element $\partial b_{1 \cdots k}$, where
$b_{1 \cdots k}$ corresponds to the top cell in the product of spheres.
\end{remark}

\begin{proof}
Suppose $X$ is a CW complex, for which the $(n+1)$-skeleton $X^{n+1}$ is obtained from
$X^n$ by attaching  a single $(n+1)$-dimensional cell $e$, and let $v$ be the corresponding generator in $\ah(X^{n+1})$.
Let $f \colon  S^n \rightarrow X^n$ be the attaching map of~$e$. An Adams--Hilton model for $f$ makes the following diagram commutative up to chain homotopy:
$$
\xymatrix{
\mathbf{AH}(S^n) \ar[r] \ar[d]^{\mathbf{AH}(f)} & CU_{*}(\Omega S^n) \ar[d]^{CU_{*}(\Omega f)} \\
\mathbf{AH}(X^n) \ar[r]^{\theta_{X^n}} & CU_{*}(\Omega X^n)
}
$$
Since $\mathbf{AH}(S^n) \cong (T(a), 0)$, it is enough to define $\ah(f)$ on $a$. Suppose $\ah(f)(a) = z$, $[z]\in H_*(\Omega X^n)$. From the diagram above we get
$$
(\theta_{X^n})_{*} [z] = H(\Omega f) [a].
$$
From the definition of the differential $\partial_{X^{n+1}}$ (see formula~\eqref{eq_def_AH_dif}) we obtain
$\partial_{X^{n+1}}(v) = z+\partial_{X^n} b$ with some $b\in\ah(X^n)$. By Proposition~\ref{prop2_AH_f}, we may change $\ah(f)$ by adding a boundary and obtain $\ah(f)(a)=\partial_{X^{n+1}}(v)$. 

Applying the construction above
to the map
$\omega \colon  S^{N-1} \rightarrow (\underline{S})^{\partial \Delta (1, \ldots, k)} \cong T(S^{n_1}, \ldots, S^{n_k})$, we conclude that
we can take $\ah(\omega)(a) = \partial b_{1 \cdots k}$, where $b_{1 \cdots k}$ is the generator corresponding to the top cell of $S^{n_1} \times \cdots \times S^{n_k}$.
\end{proof}

\begin{theorem} \label{th_prod_sph_AH}
Consider the product map
$$
f \colon  S^5 \times S^2 \rightarrow T(S^2, S^2, S^2) \times S^2 \cong (S^2)^{\partial \Delta(1,2,3) * \{4\}}
$$
of the attaching map $S^5 \rightarrow T(S^2, S^2, S^2)$ and the identity map $S^2 \rightarrow S^2$.
Then there is an Adams--Hilton model $\ah(f)$ given by
$$
\ah(S^5 \times S^2) \cong T(b_1, b_2, b_{12}), \quad |b_1|= 4,\quad
|b_2| = 1, \quad |b_{12}| = 6,
$$
$$
\partial b_i = 0, \quad  \partial b_{12} = - [b_1, b_2];
$$
$$
\ah(T(S^2, S^2, S^2) \times S^2) \cong T(c_J, \,J \in \partial \Delta (1,2,3) * \{4\}, \,J \neq \varnothing), 
$$
$$
\partial c_J = 
\sum\limits_{(I,L), I\sqcup L = J}
 c_I \otimes c_L;
$$
$$ 
b_1 \xmapsto{\ah(f)} ``\partial c_{123}\mbox{''} = 
[c_1, c_{23}] + [c_{12}, c_3] + [c_{13}, c_2], 
\quad 
b_2 \xmapsto{\ah(f)} c_4,
$$
$$
b_{12} 
\xmapsto{\ah(f)}
 [ c_{124}, c_3] +[ c_{134}, c_2] 
+[ c_1, c_{234}] + [ c_{12}, c_{34}] 
+ [ c_{13}, c_{24}] + [ c_{14}, c_{23}].$$ 
\end{theorem}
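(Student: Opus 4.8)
The plan is to realise $f$ as a product of two maps and then appeal to the product formula of Theorem~\ref{th_anick}(11). Write $f = \omega \times \mathrm{id}$ with $X_1 = S^5$, $Y_1 = T(S^2, S^2, S^2)$, $X_2 = Y_2 = S^2$, where $\omega\colon S^5 \to T(S^2,S^2,S^2)$ is the attaching map of the top cell. For $\omega$, Proposition~\ref{prop_AHmap_attaching} (taken with $k = 3$ and all $n_i = 2$) gives an Adams--Hilton model with $\ah(\omega)(a) = \partial c_{123} = [c_1, c_{23}] + [c_{12}, c_3] + [c_{13}, c_2]$, while the identity $S^2 \to S^2$ has the identity model by Theorem~\ref{th_anick}~\eqref{itm:2}, sending the $S^2$-generator to $c_4$. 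First I would record the two ring maps of property~(10): $\nu_{X_1 X_2}$ sends $b_1 \mapsto a \otimes 1$, $b_2 \mapsto 1 \otimes b_2$, $b_{12} \mapsto 0$, and $\nu_{Y_1 Y_2}$ sends $c_J \mapsto c_J \otimes 1$ for $J \subseteq \{1,2,3\}$, $c_4 \mapsto 1 \otimes c_4$, and annihilates every mixed generator $c_J$ with $4 \in J \neq \{4\}$.

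I would then take $\varphi = \ah(f)$ to be the dga-morphism defined by the formulas in the statement and check that it fills in the square of Theorem~\ref{th_anick}(11). On $b_1$ and $b_2$ commutativity is immediate: both composites produce $\partial c_{123} \otimes 1$ and $1 \otimes c_4$ respectively. On $b_{12}$ both routes must return $0$, so one needs $\varphi(b_{12}) \in \ker \nu_{Y_1 Y_2}$; this holds term by term, as each of the six brackets in $\varphi(b_{12})$ carries a factor indexed by a mixed simplex (one of $\{1,2,4\}$, $\{1,3,4\}$, $\{2,3,4\}$, $\{1,4\}$, $\{2,4\}$, $\{3,4\}$), which $\nu_{Y_1 Y_2}$ kills. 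It then remains to verify that $\varphi$ is a chain map; since $\partial b_i = 0$ and $\partial b_{12} = -[b_1, b_2]$, the only nonformal relation is
$$
\partial\varphi(b_{12}) = \varphi(\partial b_{12}) = -[\partial c_{123}, c_4].
$$

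The step I would single out is this bracket identity, and I would prove it by passing to the ambient model $\ah((S^2)^4)$ of Corollary~\ref{cor_s2k}, into which $\ah\bigl((S^2)^{\partial\Delta(1,2,3)*\{4\}}\bigr)$ sits as the subalgebra on the generators indexed by the subcomplex, with the same differential. There one has $\partial c_{1234} = \sum_{I \sqcup L = \{1,2,3,4\}} c_I \otimes c_L$; setting aside the two summands that involve the non-subcomplex generator $c_{123}$ and regrouping the remaining twelve into graded commutators (every relevant product of degrees being odd, so that $c_I c_L + c_L c_I = [c_I, c_L]$) yields the single relation
$$
\partial c_{1234} = [c_{123}, c_4] + \varphi(b_{12}).
$$
Applying $\partial$ and using $\partial^2 c_{1234} = 0$ together with $\partial c_4 = 0$ gives $\partial\varphi(b_{12}) = -[\partial c_{123}, c_4]$, exactly as required. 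With the square now commutative, Theorem~\ref{th_anick}(11) certifies that $\varphi$ is an Adams--Hilton model for $f$.

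The main obstacle is precisely the identity $\partial\varphi(b_{12}) = -[\partial c_{123}, c_4]$: tackled directly it is an unpleasant Koszul-sign bookkeeping over the two-block partitions of $\{1,2,3,4\}$. Recognising $\varphi(b_{12})$ as the part of $\partial c_{1234}$ disjoint from $c_{123}$ collapses that computation into the single fact $\partial^2 c_{1234} = 0$, and this is where I would concentrate the care; the remaining checks --- degrees, kernel membership, and the compatibility of the factor models with the maps $\nu$ from property~(10) --- are routine.
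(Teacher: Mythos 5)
Your proposal is correct and follows essentially the same route as the paper's own proof: decompose $f$ as $\omega\times\mathrm{id}$, invoke Theorem~\ref{th_anick}(11) via the maps $\nu$ of property~(10), and verify the chain-map condition on $b_{12}$ by recognising $\varphi(b_{12})$ as $\partial c_{1234}-[c_{123},c_4]$ inside the ambient model $\ah((S^2)^{\Delta(1,2,3,4)})$ and applying $\partial^2 c_{1234}=0$. The only cosmetic difference is that you spell out the odd-degree commutator regrouping and the term-by-term kernel check for $\nu_{Y_1Y_2}$, which the paper leaves implicit.
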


\begin{proof}
Adams--Hilton models for $S^5 \times S^2$ and 
$T(S^2, S^2, S^2) \times S^2$ are described in Theorem~\ref{ther_AH_sph_colim}
and Corollary~\ref{cor_s2k}.
It is easy to see that the maps of algebras defined in 
Theorem~\ref{th_anick}~(10)
$$
\ah(S^5 \times S^2) \xrightarrow{\nu_{S^5 S^2}} \ah(S^5) \otimes \ah(S^2),
$$
$$
\ah(T(S^2, S^2, S^2) \times S^2) \xrightarrow{\nu_{T S^2}} 
\ah(T(S^2, S^2, S^2)) \otimes \ah( S^2)
$$
are dga-morphisms, i.e. commute with the differentials.
The model for the attaching map
$\omega \colon  S^5 \rightarrow T(S^2, S^2, S^2)$ is given by Proposition~\ref{prop_AHmap_attaching}:
$$
\ah(S^5) \cong (T(b_1), 0), \quad |b_1| = 4,
$$
$$
\ah(T(S^2, S^2, S^2)) \cong T(c_1, c_2, c_3, c_{12}, c_{13}, c_{23}),
\quad  |c_i| = 1, \quad |c_{ij}| = 3,
$$
$$
\partial c_i = 0, \quad \partial c_{ij} = [c_i, c_j] = c_i \otimes c_j + c_j \otimes c_i,
$$
$$
b_1 \xmapsto{\ah(\omega)}
``\partial c_{123}\mbox{''} = 
[c_1, c_{23}] + [c_{12}, c_3] + [c_{13}, c_2].
$$
The identity map $S^2 \rightarrow S^2$ is modeled by the identity 
map of the Adams--Hilton models:
$$
\ah(S^2) \cong (T(b_2), 0) \rightarrow \ah(S^2) \cong (T(c_4), 0),\quad  b_2 \mapsto c_4.
$$
By Theorem~\ref{th_anick}~(11), a map 
$$
\varphi\colon  \ah(S^5 \times S^2) \rightarrow \ah(T(S^2, S^2, S^2) \times S^2)
$$
is an Adams--Hilton model for the product of maps
if it makes the following diagram commutative
$$
\xymatrix{
\ah(S^5 \times S^2) \ar[rr]^{\nu_{S^5 S^2}} 
\ar[d]^{\varphi} & & \ah(T(S^2, S^2, S^2)) \otimes \ah(S^2)
\ar[d]^{\mathbf{AH}(\omega) \otimes 1_{\mathbf{AH(S^2)}}} \\
\ah(T(S^2, S^2, S^2) \times S^2) \ar[rr]^{\nu_{T S^2}} 
& &\mathbf{AH}(T(S^2, S^2, S^2)) \otimes \mathbf{AH}(S^2)
}
$$
Since $\ah(S^5 \times S^2)$ extends the model $\ah(S^5 \vee S^2)$ and
$\ah(S^5 \times S^2) \cong (T(b_1, b_2, b_{12}), \partial)$ we only need to define $\varphi$ on $b_{12}$. We put
$$
b_{12} 
\xmapsto{\varphi}
 [ c_{124}, c_3] +[ c_{134}, c_2] 
+[ c_1, c_{234}] + [ c_{12}, c_{34}] 
+ [ c_{13}, c_{24}] + [ c_{14}, c_{23}]$$ 
and check that $\varphi$ commutes with the differential.
We have
$$
b_{1} \xmapsto{\varphi} ``\partial c_{123}\mbox{''},\quad  
b_2\xmapsto{\varphi} c_4, \quad 
\partial b_{12} = - [b_1, b_2] \xmapsto{\varphi} 
-[``\partial c_{123}\mbox{''}, c_4].
$$
Using the inclusion of models $\ah(T(S^2, S^2, S^2) \times S^2)
\hookrightarrow \ah((S^2)^{\Delta(1,2,3,4)})$ we get
$$
\partial c_{1234} = [c_{123}, c_4] +
[ c_{124}, c_3] +[ c_{134}, c_2] 
+[ c_1, c_{234}] + [ c_{12}, c_{34}] 
+ [ c_{13}, c_{24}] + [ c_{14}, c_{23}].
$$
By applying $\partial$ once again we obtain
$$
\partial[c_{123}, c_4] = [\partial c_{123}, c_4] = 
- \partial ([ c_{124}, c_3] +[ c_{134}, c_2] 
+[ c_1, c_{234}] + [ c_{12}, c_{34}] 
+ [ c_{13}, c_{24}] + [ c_{14}, c_{23}]),
$$
where both sides are now defined in $\ah(T(S^2, S^2, S^2) \times S^2)$.
Note that
\begin{multline} \label{eq_c1234}
b_{12} \xmapsto{\varphi}
``\partial c_{1234} - [c_{123}, c_4]\mbox{''} 
\\
=[ c_{124}, c_3] +[ c_{134}, c_2] 
+[ c_1, c_{234}] + [ c_{12}, c_{34}] 
+ [ c_{13}, c_{24}] + [ c_{14}, c_{23}].
\end{multline}

Here we put quotation marks around $\partial c_{1234} - [c_{123}, c_4]$  because the elements $c_{1234}$ and $[c_{123}, c_4]$ are defined in $\ah((S^2)^{\Delta(1,2,3,4)})$ only, while $\partial c_{1234} - [c_{123}, c_4]$ is in $\ah(T(S^2, S^2, S^2) \times S^2)$.

It remains to check that $\varphi$ makes the diagram above commutative.
We have $\nu_{S^5 S^2} (b_{12})=0$ by the definition of $\nu_{S^5 S^2}$. Since
$\nu_{T S^2}$ is a ring homomorphism, it commutes with the Lie bracket, and we obtain
$\nu_{T S^2}\varphi(b_{12}) = 0$.
\end{proof}

\begin{theorem} \label{th_map_colim_AH}
Let $\K = \partial \Delta (\partial \Delta (1,2,3), 4, 5)$.
Consider the map
$$
g \colon  T(S^5, S^2, S^2) \rightarrow \stwok,
$$
induced by the maps $S^5 \xrightarrow{\omega} T(S^2, S^2, S^2)$, \,$S^2 \xrightarrow{1_{S^2}} S^2$.

Then there is an Adams--Hilton model $\ah(g)$ given by
$$
\ah(T(S^5, S^2, S^2)) \cong T(b_1, b_2, b_3, b_{12}, b_{13}, b_{23}), 
$$
$$
|b_1|= 4,\quad 
|b_2|= |b_3| = 1, \quad |b_{12}| = |b_{13}| = 6,\quad  |b_{23}| =3,
$$
$$
\partial b_i = 0, \quad  \partial b_{12} = - [b_1, b_2],
\quad  \partial b_{13} = -[b_1, b_3], \quad  \partial b_{23} = [b_2, b_3]; 
$$
$$
\ah(\stwok) \cong T(c_J, \,J \in  \K, \,J \neq \varnothing), 
$$
$$
\partial c_J = 
\sum\limits_{(I,L), I\sqcup L = J}
 c_I \otimes c_L;
$$
\begin{equation} \label{eq_c123}
b_1 \xmapsto{\ah(g)} ``\partial c_{123}\mbox{''} = 
[c_1, c_{23}] + [c_{12}, c_3] + [c_{13}, c_2], 
\quad 
b_2 \xmapsto{\ah(g)} c_4,
\quad 
b_3 \xmapsto{\ah(g)} c_5,
\end{equation}
$$
b_{12} 
\xmapsto{\ah(g)}
 [ c_{124}, c_3] +[ c_{134}, c_2] 
+[ c_1, c_{234}] + [ c_{12}, c_{34}] 
+ [ c_{13}, c_{24}] + [ c_{14}, c_{23}],
$$ 
$$
b_{13} 
\xmapsto{\ah(g)}
 [ c_{125}, c_3] +[ c_{135}, c_2] 
+[ c_1, c_{235}] + [ c_{12}, c_{35}] 
+ [ c_{13}, c_{25}] + [ c_{15}, c_{23}],
$$ 
$$
b_{23} 
\xmapsto{\ah(g)} c_{45}
.
$$ 
\end{theorem}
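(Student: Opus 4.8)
The plan is to realise the fat wedge $T(S^5,S^2,S^2)$ as a colimit of products of spheres and to build $\ah(g)$ out of Adams--Hilton models on the pieces, almost all of which were already produced in Theorem~\ref{th_prod_sph_AH}. First I would identify $T(S^5,S^2,S^2)$ with the polyhedral product $(\underline{S})^{\partial\Delta(1,2,3)}$, where $n_1=5$ and $n_2=n_3=2$, and cover it by its three maximal subproducts $S^5\times S^2$ over vertices $\{1,2\}$, $S^5\times S^2$ over vertices $\{1,3\}$, and $S^2\times S^2$ over vertices $\{2,3\}$. The generators and differentials of $\ah(T(S^5,S^2,S^2))$ asserted in the statement are then read off from Theorem~\ref{ther_AH_sph_colim} (equivalently Proposition~\ref{prop_diff_AHsph}); in particular the sign rule gives $\partial b_{12}=-[b_1,b_2]$ and $\partial b_{13}=-[b_1,b_3]$ (as $|b_1|=4$) but $\partial b_{23}=[b_2,b_3]$ (as $|b_2|=1$).

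Next I would describe the restriction of $g$ to each subproduct together with its model. On $S^5\times S^2$ over vertices $\{1,2\}$ the map $g$ is exactly the product map $f$ of Theorem~\ref{th_prod_sph_AH} followed by the inclusion $(S^2)^{\partial\Delta(1,2,3)*\{4\}}\hookrightarrow\stwok$; since $\{1,2,3\}\notin\partial\Delta(1,2,3)$, the complex $\partial\Delta(1,2,3)*\{4\}$ is genuinely a subcomplex of $\K=\partial\Delta(\partial\Delta(1,2,3),4,5)$, and its inclusion is modelled by the inclusion of models (Theorem~\ref{th_anick}~\eqref{itm:5}). Thus Theorem~\ref{th_prod_sph_AH} yields the stated images of $b_1,b_2,b_{12}$ verbatim, and the same argument with the relabelling $4\leftrightarrow5$ yields those of $b_1,b_3,b_{13}$. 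On $S^2\times S^2$ over vertices $\{2,3\}$ the map $g$ is the product of the two identities onto $(S^2)^{\Delta(4,5)}\subset\stwok$ (note $\{4,5\}\in\K$), whose model sends $b_2\mapsto c_4$, $b_3\mapsto c_5$ and $b_{23}\mapsto c_{45}$; this is a chain map because $\partial c_{45}=[c_4,c_5]=\ah(g)(\partial b_{23})$.

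Finally I would verify the coherency conditions of Theorem~\ref{th_anick}~\eqref{itm:8} and pass to the colimit. The pairwise intersections of the three subproducts are single spheres: the $\{1,2\}$- and $\{1,3\}$-pieces meet in $S^5$ (vertex $1$), where both models send $b_1\mapsto[c_1,c_{23}]+[c_{12},c_3]+[c_{13},c_2]$; the $\{1,2\}$- and $\{2,3\}$-pieces meet in $S^2$ (vertex $2$), where both send $b_2\mapsto c_4$; and the $\{1,3\}$- and $\{2,3\}$-pieces meet in $S^2$ (vertex $3$), where both send $b_3\mapsto c_5$. The triple intersection is the basepoint, so the coherency conditions hold and the colimit of the three maps is an Adams--Hilton model for $g$ by Theorem~\ref{th_anick}~\eqref{itm:8}, with precisely the formulae of the statement. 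The only substantial computation lies inside Theorem~\ref{th_prod_sph_AH}, which is already proved; the remaining difficulty here is essentially the bookkeeping of how the spheres of the fat wedge are assigned to the vertices $1,\dots,5$ of $\K$, arranged so that the three restrictions agree on their overlaps.
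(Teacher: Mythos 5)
Your proposal is correct and follows essentially the same route as the paper: the paper's proof is precisely the observation that $g$ is the colimit of two copies of the map $f$ from Theorem~\ref{th_prod_sph_AH} (over the faces $\{1,2\}$ and $\{1,3\}$ of $\partial\Delta(1,2,3)$, with the relabelling $4\leftrightarrow 5$ for the second) together with the identity on $S^2\times S^2$, combined via Theorem~\ref{th_anick}~\eqref{itm:8}. You have merely spelled out the coherency checks on the pairwise intersections, which the paper leaves implicit.
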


\begin{proof}
The result follows Theorem~\ref{th_anick}~\eqref{itm:8}, because the map $g$ is the colimit of two maps $f$ from Theorem~\ref{th_prod_sph_AH} and the identity map.
\end{proof}

\begin{theorem} \label{th_wh}
Let $\K = \partial \Delta (\partial \Delta (1,2,3), 4,5)$.
Let $[[\mu_1, \mu_2, \mu_3], \mu_4, \mu_5] \in \pi_7(\Omega \cpk)$
be the canonical iterated Whitehead product in $\cpk$, given by the composite
$$
S^8 \xrightarrow{\,\,\, \omega \,\,\,} T(S^5, S^2, S^2) 
\xrightarrow{\,\,\, g \,\,\,} (S^2)^{\K}
\xrightarrow{\,\,\, i \,\,\,} \cpk.
$$
Then Adams--Hilton models can be constructed explicitly for each map in the sequence:
$$
\mathbf{AH}(S^8) \longrightarrow \mathbf{AH}(T(S^5, S^2, S^2)) \longrightarrow 
\mathbf{AH}((S^2)^{\K}) \longrightarrow \mathbf{AH}(\cpk).
$$
Furthermore, the Hurewicz image of the higher Whitehead product
$[[\mu_1, \mu_2, \mu_3], \mu_4, \mu_5]$ is represented by the following cycle in the cobar construction 
$\cobar H_{*}(\cpk) \cong \mathbf{AH}(\cpk)$:
$$
- \partial \bigl( 
[\chi_{123}, \chi_{45}] + [\chi_{1234}, \chi_5] + [\chi_{1235}, \chi_4] 
\bigr) .
$$
\end{theorem}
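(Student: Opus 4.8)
The overall strategy is to build a model for the composite $\phi = i\circ g\circ\omega\colon S^8\to\cpk$ by composing the three factor models and then evaluating it on the bottom generator of $\ah(S^8)$. First I would assemble the factors. The map $\ah(\omega)$ for the attaching map $\omega\colon S^8\to T(S^5,S^2,S^2)$ is supplied by Proposition~\ref{prop_AHmap_attaching}: writing $\ah(S^8)\cong(T(a),0)$ with $|a|=7$, it sends $a$ to $\partial b_{123}$, the differential of the top-cell generator of $S^5\times S^2\times S^2$, which by the sign rule of the model expands as
$$
\ah(\omega)(a) = -[b_1,b_{23}]-[b_{12},b_3]-[b_{13},b_2].
$$
The map $\ah(g)$ for $g\colon T(S^5,S^2,S^2)\to\stwok$ is written out explicitly in Theorem~\ref{th_map_colim_AH}, and the map $\ah(i)$ for the inclusion is, by Proposition~\ref{prop_incl_sk_cpk}, the canonical inclusion $\ah(\stwok)\hookrightarrow\ah(\cpk)$, under which $c_J\mapsto\chi_J$. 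By Theorem~\ref{th_anick}~\eqref{itm:3} the composite $\ah(i)\circ\ah(g)\circ\ah(\omega)$ is then a model for $\phi$, which establishes the first assertion.

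Next I would identify the Hurewicz image. The generator $a$ represents the class of $H_7(\Omega S^8)\cong\Z$ coming from the bottom cell $S^7\hookrightarrow\Omega S^8$, so by naturality of $\theta$ the cycle $\ah(\phi)(a)$ represents $(\Omega\phi)_*[a]$, which is exactly the Hurewicz image of the adjoint of $\phi$, i.e. of $[[\mu_1,\mu_2,\mu_3],\mu_4,\mu_5]$ in $H_7(\Omega\cpk)$. It therefore remains to compute $\ah(g)(\partial b_{123})$ and to rename $c_J\mapsto\chi_J$ under $\ah(i)$, so that the formulas of Theorem~\ref{th_map_colim_AH} may be read verbatim in $\chi$. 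Applying $\ah(g)$ term by term, using $b_2\mapsto c_4$, $b_3\mapsto c_5$, $b_{23}\mapsto c_{45}$, $b_1\mapsto``\partial c_{123}\mbox{''}$, and the identity $\ah(g)(b_{12})=``\partial c_{1234}-[c_{123},c_4]\mbox{''}$ from~\eqref{eq_c1234} together with its analogue for $b_{13}$, I obtain
$$
\ah(\phi)(a) = -[``\partial\chi_{123}\mbox{''},\chi_{45}] - [``\partial\chi_{1234}-[\chi_{123},\chi_4]\mbox{''},\chi_5] - [``\partial\chi_{1235}-[\chi_{123},\chi_5]\mbox{''},\chi_4].
$$

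Finally I would match this cycle with $-\partial\bigl([\chi_{123},\chi_{45}]+[\chi_{1234},\chi_5]+[\chi_{1235},\chi_4]\bigr)$, where the three commutators are formed in the ambient model in which all $\chi_\sigma$ exist. Expanding the latter by the graded Leibniz rule, using $\partial\chi_{45}=[\chi_4,\chi_5]$, $\partial\chi_4=\partial\chi_5=0$, and the splittings $``\partial\chi_{1234}\mbox{''}=[\chi_{123},\chi_4]+\ah(g)(b_{12})$ and $``\partial\chi_{1235}\mbox{''}=[\chi_{123},\chi_5]+\ah(g)(b_{13})$, the terms containing $``\partial\chi_{123}\mbox{''}$, $\ah(g)(b_{12})$ and $\ah(g)(b_{13})$ cancel against the displayed expression for $\ah(\phi)(a)$, and the leftover discrepancy is precisely
$$
[\chi_{123},[\chi_4,\chi_5]]-[[\chi_{123},\chi_4],\chi_5]-[[\chi_{123},\chi_5],\chi_4],
$$
which vanishes by the graded Jacobi identity for the commutator bracket. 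I expect the only genuine difficulty to be the sign bookkeeping rather than any conceptual point: the topology and the functoriality are entirely handled by the cited results, so one must simply track the Koszul signs in $\partial b_{123}$ and the graded Leibniz and Jacobi signs, while respecting that the ``quotation-mark'' generators $\chi_{123},\chi_{1234},\chi_{1235}$ lie outside $\ah(\cpk)$ and occur only inside commutators whose total differential does return to $\ah(\cpk)$. The pleasant outcome is that the whole verification collapses onto this single graded Jacobi relation.
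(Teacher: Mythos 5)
Your proposal is correct and follows essentially the same route as the paper: assemble $\ah(i)\circ\ah(g)\circ\ah(\omega)$ via the cited results, evaluate on the degree-$7$ generator using $\ah(\omega)(a)=``\partial b_{123}\mbox{''}$ and the splittings $\ah(g)(b_{12})=``\partial c_{1234}-[c_{123},c_4]\mbox{''}$, $\ah(g)(b_{13})=``\partial c_{1235}-[c_{123},c_5]\mbox{''}$, and then identify the result with $-\partial\bigl([\chi_{123},\chi_{45}]+[\chi_{1234},\chi_5]+[\chi_{1235},\chi_4]\bigr)$ via the graded Leibniz rule and the graded Jacobi identity. The paper performs the same cancellation by rewriting $[[c_{123},c_4],c_5]+[[c_{123},c_5],c_4]=-[[c_4,c_5],c_{123}]$ and absorbing it into $-\partial[c_{123},c_{45}]$, which is exactly your Jacobi step in a different order.
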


\begin{proof}
Adams--Hilton models for $S^8$, $T(S^5, S^2, S^2)$, $\stwok$ and $\cpk$ are
constructed in Theorem~\ref{ther_AH_sph_colim}, Corollary~\ref{cor_s2k} and
Theorem~\ref{th_AH_cpk}. Adams--Hilton models for the maps $\omega$, $g$ and $i$
are constructed in Proposition~\ref{prop_AHmap_attaching}, Theorem~\ref{th_map_colim_AH} and Proposition~\ref{prop_incl_sk_cpk}.
By Theorem~\ref{th_anick}~\eqref{itm:3} we
may take $\ah(i \circ g \circ \omega)$ as an
Adams--Hilton model for the iterated higher Whitehead product 
$i \circ g \circ \omega = [[\mu_1, \mu_2, \mu_3], \mu_4, \mu_5]$.

It remains to calculate the image of $\ah(i \circ g \circ \omega)$ on the generator of $\ah(S^8)$. We have
$$
\ah(S^8) \cong (T(a_1), 0), \quad a_1 \xmapsto{\ah({\omega})}
``\partial b_{123}\mbox{''} = -[b_1, b_{23}]-[b_{12}, b_3]-[b_{13}, b_2].
$$
We use formulae \eqref{eq_c1234}, \eqref{eq_c123}, the inclusions of models 
$\ah(\stwok)
\hookrightarrow \ah((S^2)^{\Delta^4})$ and
$\ah(\cpk)
\hookrightarrow \ah((\cp)^{\Delta^4})$ to calculate
$$
``\partial b_{123}\mbox{''}
\xmapsto{\ah(g)}
- [``\partial c_{123} \mbox{''}, c_{45}] 
- [``\partial c_{1234} \mbox{''}, c_5] 
+[[``c_{123} \mbox{''}, c_4], c_5] 
- [``\partial c_{1235} \mbox{''}, c_4] 
+ [[``c_{123} \mbox{''}, c_5], c_4]
$$
$$
=
- [``\partial c_{123} \mbox{''}, c_{45}] 
- [``\partial c_{1234} \mbox{''}, c_5] 
-[[c_4, c_5], ``c_{123} \mbox{''}] 
- [``\partial c_{1235} \mbox{''}, c_4] 
$$
$$
=
- \partial \left([`` c_{123} \mbox{''}, c_{45}] 
+ [`` c_{1234} \mbox{''}, c_5] 
+ [``c_{1235} \mbox{''}, c_4] \right)
$$
$$
\xmapsto{\ah(i)}
- \partial \left([`` \chi_{123} \mbox{''}, \chi_{45}] 
+ [`` \chi_{1234} \mbox{''}, \chi_5] 
+ [``\chi_{1235} \mbox{''}, \chi_4] \right).
$$
The elements $\chi_{123}$, $\chi_{1234}$ and $\chi_{1235}$ are not defined in $\cobar H_{*}(\cpk)$, but the boundary of the element above belongs there.
\end{proof}

\end{document}